\numberwithin{equation}{section}
\theoremstyle{plain}
\newtheorem{theorem}{Theorem}[section]
\newtheorem{lemma}[theorem]{Lemma}
\newtheorem{proposition}[theorem]{Proposition}
\theoremstyle{definition}
\newtheorem{definition}[theorem]{Definition}
\theoremstyle{remark}
\newtheorem{remark}[theorem]{Remark}
\newcommand{\s}{{\rm S}}
\newcommand{\lip}{{\rm lip}}
\newcommand{\Leb}{{\rm Leb}}
\newcommand{\RCD}{{\sf RCD}}
\newcommand{\N}{\mathbb{N}}
\newcommand{\Q}{\mathbb{Q}}
\newcommand{\R}{\mathbb{R}}
\newcommand{\mm}{{\mbox{\boldmath$m$}}}
\newcommand{\ggamma}{{\mbox{\boldmath$\gamma$}}}
\newcommand{\mmu}{{\mbox{\boldmath$\mu$}}}
\newcommand{\ppi}{{\mbox{\boldmath$\pi$}}}
\newcommand{\sfd}{{\sf d}}
\newcommand{\Kliminf}{K\kern-3pt-\kern-2pt\mathop{\rm lim\,inf}\limits}  
\newcommand{\supp}{\mathop{\rm supp}\nolimits}   
\newcommand{\Lip}{\mathop{\rm Lip}\nolimits}          
\renewcommand{\d}{{\mathrm d}}
\newcommand{\restr}[1]{\lower3pt\hbox{$|_{#1}$}}
\newcommand{\eps}{\varepsilon}  
\newcommand{\nchi}{{\raise.3ex\hbox{$\chi$}}}
\newcommand{\limi}{\varliminf}
\newcommand{\lims}{\varlimsup}
\newcommand{\prob}[1]{\mathscr P(#1)}                   
\newcommand{\probt}[1]{\mathscr P_2(#1)}                   
\newcommand{\e}{{\rm{e}}}                           
\renewcommand{\mm}{\mathfrak m}                                
\newcommand{\weakgrad}[1]{|\nabla #1|_w} 
\renewcommand{\weakgrad}[1]{|D#1|}
\def\Xint#1{\mathchoice {\XXint\displaystyle\textstyle{#1}}
{\XXint\textstyle\scriptstyle{#1}}
{\XXint\scriptstyle\scriptscriptstyle{#1}}
 {\XXint\scriptscriptstyle\scriptscriptstyle{#1}}
  \!\int}
\def\XXint#1#2#3{{\setbox0=\hbox{$#1{#2#3}{\int}$} \vcenter{\hbox{$#2#3$}}\kern-.5\wd0}}
  \def\dashint{\Xint-}
\DeclareMathOperator*{\esssup}{ess\,sup}
\title{The  continuity equation on metric measure spaces}
\begin{document}
\author{ Nicola Gigli\thanks{
 \textit{Previous address:} University of Nice
\textit{Current address:} Institut de math\'ematiques de Jussieu, UPMC,  \textsf{nicola.gigli@imj-prg.fr}}
\and Bang-Xian Han \thanks
{ Universit\'{e} Paris Dauphine}
}

\maketitle

\begin{abstract}
Aim of this paper is to show that it makes sense to write the continuity equation on a metric measure space $(X,\sfd,\mm)$ and that absolutely continuous curves $(\mu_t)$ w.r.t. the distance $W_2$ can be completely characterized as solutions of the continuity equation itself, provided we impose the condition  $\mu_t\leq C\mm$ for every $t$ and some $C>0$.

\textbf{Keywords}: Absolutely continuous curve, continuity equation, optimal transport, metric measure spaces.\\
\end{abstract}

\tableofcontents

\section{Introduction}

A crucial intuition of Otto \cite{Otto01}, inspired by the work of Benamou-Brenier \cite{BenamouBrenier00}, has been to realize that absolutely continuous curves of measures $(\mu_t)$ w.r.t. the quadratic transportation distance $W_2$ on $\R^d$ can be interpreted as solutions of the continuity equation
\begin{equation}
\label{eq:contintro}
\partial_t\mu_t+\nabla\cdot(v_t\mu_t)=0,
\end{equation}
where the vector fields $v_t$ should be considered as the `velocity' of the moving mass $\mu_t$ and, for curves with square-integrable speed, satisfy
\begin{equation}
\label{eq:normvt}
\int_0^1\int|v_t|^2\,\d\mu_t\,\d t<\infty.
\end{equation}
This intuition has been made rigorous by the first author, Ambrosio and Savar\'e in \cite{AmbrosioGigliSavare08}, where it has been used to develop a solid first order calculus on the space $(\probt{\R^d},W_2)$, with particular focus on  the study of gradient flows.

Heuristically speaking, the continuity equation describes the link existing between the `vertical derivative' $\partial_t\mu_t$ (think to it as variation of the densities, for instance) and the `horizontal displacement' $v_t$. In this sense it provides the crucial link between analysis made on the $L^p$ spaces, where the distance is measured `vertically',  and the one based on optimal transportation, where distances are measured by `horizontal' displacement. This is indeed the heart of the crucial substitution made by Otto in \cite{Otto01} who, to define the metric tensor $g_\mu$ on the space $(\probt{\R^d},W_2)$ at a measure $\mu=\rho\mathcal L^d$ considers a `vertical' variation $\delta\rho$ such that $\int\delta\rho\,\d\mathcal L^d=0$, then looks for solutions of
\begin{equation}
\label{eq:otto}
\delta\rho=-\nabla\cdot(\nabla\varphi\rho),
\end{equation}
and finally defines
\begin{equation}
\label{eq:otto2}
g_\mu(\delta\rho,\delta\rho):=\int|\nabla\varphi|^2\,\d\mu.
\end{equation}
The substitution \eqref{eq:otto} is then another way of thinking at the continuity equation, while the definition \eqref{eq:otto2} corresponds to the integrability requirement \eqref{eq:normvt}.

It is therefore not surprising that each time one wants to put in relation the geometry of optimal transport with that of $L^p$ spaces some form of continuity equation must be studied. In the context of analysis on non-smooth structures, this has been implicitly done in \cite{Gigli-Kuwada-Ohta10,AmbrosioGigliSavare11} to show that the gradient flow of the relative entropy on the space $(\probt X,W_2)$ produces the same evolution of the gradient flow of the energy (sometime called Cheeger energy or Dirichlet energy) in the space $L^2(X,\mm)$, where $(X,\sfd,\mm)$ is some given metric measure space.

\bigskip

The purpose of this paper is to make these arguments more explicit and to show that:
\begin{itemize}
\item[i)] It is possible to formulate the continuity equation on general metric measure spaces $(X,\sfd,\mm)$,
\item[ii)] Solutions of the continuity equation completely characterize absolutely continuous curves $(\mu_t)\subset\probt X$ with square-integrable speed w.r.t. $W_2$ and such that $\mu_t\leq C\mm$ for every $t\in[0,1]$ and some $C>0$.
\end{itemize}
In fact, the techniques we use can directly produce similar results for the distances $W_p$, $p\in(1,\infty)$, and for curves whose speed is in $L^1$ rather then in some $L^p$, $p>1$. Yet, we prefer not to discuss the full generality in order to concentrate on the main ideas.

\bigskip

Let us discuss how to formulate the  continuity equation on a metric measure space where no a priori smooth structure is available. Notice that in the smooth setting  \eqref{eq:contintro} has to be understood in the sense of distributions.  If we assume weak continuity of $(\mu_t)$, this is equivalently formulated as the fact that that for every $f\in C^\infty_c(\R^d)$ the map $t\mapsto\int f\,\d\mu_t$ is absolutely continuous and the identity
\[
\frac\d{\d t}\int f\,\d\mu_t=\int \d f(v_t)\,\d\mu_t,
\]
holds for a.e. $t\in[0,1]$. In other words, the vector fields $v_t$ only act on differential of smooth functions and can therefore be thought of as linear functionals $L_t$ from the space of differentials of smooth functions to $\R$. Recalling \eqref{eq:normvt}, the norm $\|L_t\|_{\mu_t}^*$ of $L_t$ should be defined as
\[
\frac12(\|L_t\|^*_{\mu_t})^2=\sup_{f\in C^\infty_c(\R^d)}L_t(f)- \frac12\int |\d f|^2\,\d\mu_t,
\]
so that being $(\mu_t)$ 2-absolutely continuous is equivalent to require  that $t\mapsto\|L_t\|^*_{\mu_t}\in L^2(0,1)$.

Seeing the continuity equation in this way allows for a formulation of it  in the abstract context of metric measure spaces $(X,\sfd,\mm)$. Indeed, recall that there is a well established notion of `space of functions having distributional differential in $L^2(X,\mm)$', which we will denote by $\s^2(X)=\s^2(X,\sfd,\mm)$ and that for each function $f\in\s^2(X)$ it is well defined the `modulus of the distributional differential $|Df|\in L^2(X,\mm)$'.

Then given a linear map $L:\s^2(X)\to\R$ and $\mu$ such that $\mu\leq C\mm$ for some $C>0$ we can define the norm $\|L\|_\mu^*$ as
\begin{equation}
\label{eq:intronorm}
\frac12(\|L\|_\mu^*)^2:=\sup_{f\in\s^2(X)}L(f)-\frac12\int |Df|^2\,\d\mu.
\end{equation}
Hence given a curve $(\mu_t)\subset\probt X$ such that $\mu_t\leq C\mm$ for some $C>0$ and every $t\in[0,1]$ and a family $\{L_t\}_{t\in[0,1]}$ of maps from $\s^2(X)$ to $\R$ such that $\int_0^1(\|L_t\|^*_{\mu_t})^2\,\d t<\infty$, we can say that the curve $(\mu_t)\subset\probt X$ solves the continuity equation
\[
\partial_t\mu_t=L_t,
\]
provided:
\begin{itemize}
\item[i)] for every $f\in\s^2(X)$ the map $t\mapsto\int f\,\d\mu_t$ is absolutely continuous,
\item[ii)] the identity
\begin{equation}
\label{eq:contintro3}
\frac{\d }{\d t}\int f\,\d\mu_t=L_t(f),
\end{equation}
holds for a.e. $t$.
\end{itemize}
Then we show  that such formulation of the continuity equation fully characterizes absolutely continuous curves $(\mu_t)$ with square-integrable speed on the space $(\probt X,W_2)$, provided we restrict the attention to curves such that  $\mu_t\leq C\mm$ for some $C>0$ and every $t\in[0,1]$. See Theorem \ref{thm:main}.

Concerning the proof of this result, we remark that the implication from absolute continuity of $(\mu_t)$ to the `PDE' \eqref{eq:contintro3} is quite easy to establish and follows essentially from the definition of Sobolev functions. This is the easy implication even in the smooth context whose proof carries over quite smoothly to the abstract setting, the major technical difference being that we don't know if in general the space $\s^2(X)$ is separable or not, a fact which causes some complications in the way we can really write down the equation \eqref{eq:contintro3}, see Definition \ref{def:solcont}.

The converse one is more difficult, as it amounts in proving that the differential identity \eqref{eq:contintro3} is strong enough to guarantee absolute continuity of the curve. The method used in the Euclidean context consists in regularizing the curve, applying the Cauchy-Lipschitz theory to the approximating sequence to find a flow of the approximating vector fields which can be used to transport $\mu_t$ to $\mu_s$ and finally in passing to the limit. By nature, this approach cannot be used in non-smooth situations. Instead, we use a crucial idea due to Kuwada which has already been applied to study the heat flow \cite{Gigli-Kuwada-Ohta10,AmbrosioGigliSavare11}. It amounts in passing to the dual formulation of the optimal transport problem by noticing that
\begin{equation}
\label{eq:dualintro}
\frac12W_2^2(\mu_{1},\mu_0)=\sup \int Q_1\varphi\,\d\mu_{1}-\int \varphi\,\d\mu_0,
\end{equation}
the $\sup$ being taken among all Lipschitz and bounded $\varphi:X\to\R$, where $Q_t\varphi$ is the evolution of $\varphi$ via the Hopf-Lax formula. A general result obtained in \cite{AmbrosioGigliSavare11} has been that it holds
\begin{equation}
\label{eq:HLintro}
\frac{\d }{\d t}Q_t\varphi(x)+\frac{\lip(Q_t\varphi)^2(x)}2\leq 0,
\end{equation}
for every $t$ except a countable number, where $\lip(f)$ is the local Lipschitz constant $f$. Thus  we can formally write
\[
\begin{split}
 \int Q_1\varphi\,\d\mu_{1}-\int \varphi\,\d\mu_0&=\int_0^1\frac{\d}{\d t}\int Q_t\varphi\,\d\mu_t\,\d t\\
\text{by }\eqref{eq:contintro3}\qquad&=\int_0^1\int\frac{\d}{\d t}Q_t\varphi\,\d\mu_t\,\d t+\int_0^1L_t(Q_t\varphi)\,\d t,\\
\text{by }\eqref{eq:intronorm},\eqref{eq:HLintro}\qquad&\leq \int_0^1-\frac{\lip(Q_t\varphi)^2}{2}\,\d\mu_t\,\d t+\frac12\int_0^1(\|L_t\|_{\mu_t}^*)^2\,\d t+\frac12\int_0^1\int|DQ_t\varphi|^2\,\d\mu_t\,\d t.
\end{split}
\]
Using the fact that $|D f|\leq \lip(f)$ $\mm$-a.e. for every Lipschitz $f$ we then conclude that
\[
 \int Q_1\varphi\,\d\mu_{1}-\int \varphi\,\d\mu_0\leq \frac12\int_0^1(\|L_t\|_{\mu_t}^*)^2\,\d t.
 \]
Here the right hand side does not depend on $\varphi$, hence by \eqref{eq:dualintro} we deduce
\[
W_2^2(\mu_1,\mu_0)\leq \int_0^1(\|L_t\|_{\mu_t}^*)^2\,\d t,
\]
which bounds  $W_2$ in terms of the $L_t$'s only. Replacing $0,1$ with general $t,s\in[0,1]$ we deduce the desired absolute continuity. As presented here, the computation is only formal, but a rigorous justification can be given, thus leading to the result. See the proof of Theorem \ref{thm:main}.

It is worth pointing out that Kuwada's lemma works even if we don't know any uniqueness result for the initial value problem \eqref{eq:contintro3}. That is we don't know if given $\mu_0$ and a family of operators $L_t$ from $\s^2(X)$ to $\R$  the solution of \eqref{eq:contintro3} is unique or not, because we ``can't follow the flow of the $L_t$'s''. Yet, it is possible to deduce anyway that any solution is absolutely continuous.

It is also worth to make some comments about the assumption $\mu_t\leq C\mm$. Notice that if we don't impose any condition on the $\mu_t$'s, we could consider curves of the kind $t\mapsto\delta_{\gamma_t}$, where $\gamma$ is a given Lipschitz curve. In the smooth setting we see that such curve solves
\[
\partial_t\delta_{\gamma_t}+\nabla\cdot(\gamma'_t\delta_{\gamma_t})=0,
\]
so that to write the continuity equation for such curve amounts to know the value of $\gamma_t'$ at least for a.e. $t$. In the non-smooth setting to do this would mean to know who is the tangent space at $\gamma_t$ for a.e. $t$ along a Lipschitz curve $\gamma$,  an information which without any assumption on $X$ seems quite  too strong.  Instead, the process of considering only measures with bounded density has the effect of somehow `averaging out the unsmoothness of the space' and allows for the possibility of building a working differential calculus, a point raised and heavily used in \cite{Gigli13}. Here as application of the continuity equation to differential calculus we provide a Benamou-Brenier formula and describe the derivative of $\frac12W_2^2(\cdot,\nu)$ along an absolutely continuous curve.

\bigskip

We then study situations where the operators $L_t$ can be given somehow more explicitly. Recall that on the Euclidean setting the optimal (in the sense of energy-minimizer) vector fields $v_t$ appearing in \eqref{eq:contintro} always belong to the $L^2(\mu_t)$-closure of the set of gradients of smooth functions and that is some case they are really gradient of functions, so that \eqref{eq:contintro} can be written as
\begin{equation}
\label{eq:contintro2}
\partial_t\mu_t+\nabla\cdot(\nabla\phi_t\mu_t)=0,
\end{equation}
for some given smooth $\{\phi_t\}_{t\in[0,1]}$, which means that for $f$ smooth it holds
\[
\frac{\d}{\d t}\int f\,\d\mu_t=\int \d f(\nabla\phi_t)\,\d\mu_t.
\]
To interpret the equation \eqref{eq:contintro2} in the abstract framework we need to understand the duality relation between differentials and gradients of Sobolev functions on metric measure spaces. This has been done in \cite{Gigli12}, where for given $f,g\in\s^2(X)$ the two functions $D^-f(\nabla g)$ and $D^+f(\nabla g)$ have been introduced. If the space is a Riemannian manifold or a Finsler one with norms strictly convex, then we have $D^-f(\nabla g)=D^+(\nabla g)$ a.e. for every $f,g$, these being equal to the value of the differential of $f$ applied to the gradient of $g$ obtained by standard means. In the general case we do not have such single-valued duality, due to the fact that even in a flat normed situation the gradient of a function is not uniquely defined should  the norm be not strictly convex. Thus the best we can do is to define $D^-f(\nabla g)$ and $D^+f(\nabla g)$ as being, in a sense, the minimal and maximal value of the differential of $f$ applied to all the gradients of $g$.

Then we can interpret \eqref{eq:contintro2} in the non-smooth situation by requiring that for $f\in\s^2(X)$ it holds
\[
\int D^-f(\nabla\phi_t)\,\d\mu_t\leq \frac\d{\d t}\int f\,\d\mu_t\leq \int D^+f(\nabla\phi_t)\,\d\mu_t,\qquad a.e.\ t,
\]
and it turns out that this way of writing the continuity equation, which requires two inequalities rather than an equality, is still sufficient to grant absolute continuity of the curve.

Notice that in the Euclidean setting, if the functions $\phi_t$ are smooth enough we can construct the flow associated to $\nabla\phi_t$ by solving
\[
\left\{\begin{array}{ll}
\displaystyle{\frac\d{\d t}T(x,t,s)}&\!\!\!=\nabla\phi_t(T(x,t,s)),\\
\\
T(x,t,t)&\!\!\!=x,
\end{array}
\right.
\]
so that the curves $t\mapsto T(x,t,s)$ are gradient flows of the evolving function $\phi_t$ and a curve $(\mu_t)$ solves \eqref{eq:contintro3} if and only if $\mu_t= T(\cdot,t,0)_\sharp\mu_0$ for every $t\in[0,1]$. Interestingly enough, this point of view can be made rigorous even in the setting of metric measure spaces and a similar characterization of solutions of \eqref{eq:contintro3} can be given, see Theorem \ref{thm:contgrad}.

\bigskip

We conclude the paper by showing that  the heat flows and the geodesics satisfy the same sort of continuity equation they satisfy in the smooth case, namely
\[
\partial_t\mu_t+\nabla\cdot(\nabla(-\log(\rho_t))\mu_t)=0,
\]
for the heat flow, where $\mu_t=\rho_t\mm$, and
\[
\partial_t\mu_t+\nabla\cdot(\nabla\phi_t\mu_t)=0,
\]
with $\phi_t=-Q_{1-t}(-\varphi^c)$ for the geodesics, where $\varphi$ is a Kantorovich potential inducing the geodesic itself. Here the aim is not to prove new results, as these two examples were already considered in the literature \cite{Gigli-Kuwada-Ohta10,AmbrosioGigliSavare11,AmbrosioGigliSavare11-2,Gigli13}, but rather to show that they are compatible with the theory we propose. We also discuss in which sense and under which circumstances an heat flow and a geodesic can be considered not just as absolutely continuous curves on $(\probt X,W_2)$, but rather as $C^1$ curves.

\section{Preliminaries}

\subsection{Metric spaces and optimal transport}

We quickly recall here those basic fact about analysis in metric spaces and optimal transport we are going to use in the following. Standard references are \cite{AmbrosioGigliSavare08}, \cite{Villani09} and \cite{AmbrosioGigli11}.

Let $(X,\sfd)$ be a metric space. Given $f:X\to\R$ the local Lipschitz constant $\lip(f):X\to[0,\infty]$ is defined as
\[
\lip(f)(x):=\lims_{y\to x}\frac{|f(y)-f(x)|}{\sfd(x,y)},
\]
if $x$ is not isolated and 0 otherwise. Recall that the Lipschitz constant of Lipschitz function is defined as:
\[
\Lip(f):= \mathop{\sup}_{x \neq y} \frac{|f(y)-f(x)|}{\sfd(x,y)}.
\]
 In particular, if $(X,\sfd)$ is a geodesic space, we have $\Lip(f)=\mathop{\sup}_x \lip(f)(x)$.

A curve $\gamma:[0,1]\to X$ is said absolutely continuous provided there exists $f\in L^1(0,1)$ such that
\begin{equation}
\label{eq:defac}
\sfd(\gamma_s,\gamma_t) \leq \int_t^s f(r) \,\d r,\qquad\forall t,s \in [0,1], \ t<s.
\end{equation}
For an absolutely continuous curve $\gamma$ it can be proved that the limit
\[
\lim_{h\to 0}\frac{\sfd(\gamma_{t+h},\gamma_t)}{|h|},
\]
exists for a.e. $t$ and thus defines a function, called metric speed and denoted by $|\dot\gamma_t|$, which is in $L^1(0,1)$ and is minimal, in the a.e. sense, among the class of $L^1$-functions $f$ for which \eqref{eq:defac} holds.

If there exists $f\in L^2(0,1)$ for which \eqref{eq:defac} holds, we say that the curve is 2-absolutely continuous (2-a.c. in short). In the following we will often write $\int_0^1|\dot\gamma_t|^2\,\d t$ for a curve $\gamma$ which a priori is only continuous: in this case the value of the integral is taken by definition $+\infty$ if $\gamma$ is not absolutely continuous.

\bigskip

The space of continuous curves on $[0,1]$ with values in $X$ will be denoted by $C([0,1],X)$ and equipped with the $\sup$ distance. Notice that if $(X,\sfd)$ is complete and separable, then $C([0,1],X)$ is complete and separable as well. For $t\in[0,1]$ we denote by $\e_t:C([0,1],X)\to X$ the \emph{evaluation map} defined by
\[
\e_t(\gamma):=\gamma_t,\qquad\forall \gamma\in C([0,1],X).
\]
For $t,s\in[0,1]$ the map ${\rm restr}_t^s$ from $C([0,1],X)$ to itself is given by
\[
({\rm restr}_t^s\gamma)_r:=\gamma_{t+r(s-t)},\qquad\forall \gamma\in C([0,1],X).
\]

The set of Borel probability measures on $X$ is denoted by $\prob X$ and $\probt X\subset\prob X$ is the space of probability measures with finite second moment. We equip $\probt X$ with the quadratic transportation distance $W_2$ defined by
\begin{equation}
\label{eq:defw2}
W_2^2(\mu,\nu):=\inf\int \sfd^2(x,y)\,\d\ggamma(x,y),
\end{equation}
the $\inf$ being taken among all $\ggamma\in\prob{X^2}$ such that
\[
\begin{split}
\pi^1_\sharp\ggamma&=\mu,\\
\pi^2_\sharp\ggamma&=\nu.
\end{split}
\]
Given $\varphi:X\to\R\cup\{-\infty\}$ not identically $-\infty$ the $c$-transform  $\varphi^c:X\to\R\cup\{-\infty\}$ is defined by
\[
\varphi^c(y):=\inf_{x\in X}\frac{\sfd^2(x,y)}2-\varphi(x).
\]
$\varphi$ is said $c$-concave provided it is not identically $-\infty$ and $\varphi=\psi^c$ for some $\psi:X\to\R\cup\{-\infty\}$. Equivalently, $\varphi$ is $c$-concave if  it is not identically $-\infty$ and $\varphi^{cc}=\varphi$. Given a $c$-concave function $\varphi$, its $c$-superdifferential $\partial^c\varphi\subset X^2$ is defined as the set of $(x,y)$ such that
\[
\varphi(x)+\varphi^c(y)=\frac{\sfd^2(x,y)}{2},
\]
and for $x\in X$ the set $\partial^c\varphi(x)$ is the set of $y$'s such that $(x,y)\in\partial^c\varphi$. Notice that for general $(x,y)\in X^2$ we have $\varphi(x)+\varphi^c(y)\leq\frac{\sfd^2(x,y)}{2}$, thus $y\in\partial^c\varphi(x)$ can be equivalently formulated as
\[
\varphi(z)-\varphi(x)\leq\frac{\sfd^2(z,y)}{2}-\frac{\sfd^2(x,y)}{2},\qquad\forall z\in X.
\]
It turns out that for $\mu,\nu\in\probt X$ the distance $W_2(\mu,\nu)$ can be found as maximization of the dual problem of the optimal transport \eqref{eq:defw2}:
\begin{equation}
\label{eq:dual}
\frac12W_2^2(\mu,\nu)=\sup\int \varphi\,\d\mu+\int\varphi^c\,\d\nu,
\end{equation}
the $\sup$ being taken among all $c$-concave functions $\varphi$. Notice that the integrals in the right hand side are well posed because for $\varphi$ $c$-concave and $\mu,\nu\in\probt X$ we always have $\max\{\varphi,0\}\in L^1(\mu)$ and $\max\{\varphi^c,0\}\in L^1(\nu)$. The $\sup$ is always achieved and any maximizing  $\varphi$ is called Kantorovich potential from $\mu$ to $\nu$. For any Kantorovich potential we have in particular $\varphi\in L^1(\mu)$ and $\varphi^c\in L^1(\nu)$. Equivalently, the $\sup$ in \eqref{eq:dual} can be taken among all $\varphi:X\to \R$ Lipschitz and bounded.

\bigskip

We shall make frequently use of the following superposition principle, proved in \cite{Lisini07} (see also the original argument in the Euclidean framework \cite{AmbrosioGigliSavare08}):
\begin{proposition}
Let $(\mu_t)\subset\probt X$ be a 2-a.c. curve w.r.t. $W_2$. Then there exists $\ppi\in\prob{C([0,1],X)}$ such that
\[
\begin{split}
(\e_t)_\sharp\ppi&=\mu_t, \qquad \qquad\forall t\in[0,1],\\
\int_0^1|\dot\mu_t|^2\,\d t&=\iint_0^1|\dot\gamma_t|^2\,\d t\,\d \ppi(\gamma),
\end{split}
\]
and in particular $\ppi$ is concentrated on the set of 2-a.c. curves on $X$. For any such $\ppi$ we also have
\[
|\dot\mu_t|^2=\int|\dot\gamma_t|^2\,\d\ppi(\gamma),\qquad a.e.\ t\in[0,1].
\]
\end{proposition}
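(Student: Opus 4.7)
My plan is to prove this via Lisini's method: pass through an ambient Banach space where piecewise linear interpolation makes sense, construct approximating measures on curves by gluing optimal plans along a partition, extract a weak limit by tightness, and finally match both marginals and energies.

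First, I would isometrically embed $(X,\sfd)$ in a separable Banach space $B$ (e.g.\ via a Kuratowski-type embedding followed by a density argument), so that $X$ is a closed subset of $B$ and points in $B$ can be joined by straight segments. For each $n$ I fix the partition $t_i^n=i/n$, $i=0,\dots,n$, pick optimal plans $\ggamma_i^n\in\opt(\mu_{t_i^n},\mu_{t_{i+1}^n})$, and by iterated disintegration glue them into a single $\ssigma^n\in\prob{X^{n+1}}$ whose consecutive $2$-marginals are the $\ggamma_i^n$'s. I then define $F^n:X^{n+1}\to C([0,1],B)$ by sending $(x_0,\dots,x_n)$ to the piecewise linear interpolant in $B$ at the nodes $t_i^n$, and set $\ppi^n:=F^n_\sharp\ssigma^n$.

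Next, equicontinuity comes from the piecewise-linear structure: on $[t_i^n,t_{i+1}^n]$ one has $\|\gamma_s-\gamma_t\|_B\leq \tfrac{|s-t|}{t_{i+1}^n-t_i^n}\|\gamma_{t_{i+1}^n}-\gamma_{t_i^n}\|_B$, so by Jensen
\[
\int_0^1\!\!\int\|\dot\gamma_t\|_B^2\,\dt\,\d\ppi^n(\gamma)=\sum_i\tfrac{1}{t_{i+1}^n-t_i^n}\!\!\int\!\!\sfd^2(x,y)\,\d\ggamma_i^n\leq\sum_i\tfrac{W_2^2(\mu_{t_i^n},\mu_{t_{i+1}^n})}{t_{i+1}^n-t_i^n}\leq\int_0^1|\dot\mu_t|^2\,\dt,
\]
using the fundamental inequality $W_2(\mu_t,\mu_s)\leq\int_t^s|\dot\mu_r|\,\d r$. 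Together with tightness of $\{\mu_0\}=(\e_0)_\sharp\ppi^n$ this yields tightness of $\{\ppi^n\}$ in $\prob{C([0,1],B)}$ (by the standard Prohorov-type criterion for curves). I extract a subsequential weak limit $\ppi$, and note that since each $\ppi^n$-random curve passes through $X$ at every rational partition point and $X$ is closed in $B$, the limit is concentrated on curves taking values in $X$; a standard lower-semicontinuity argument for the kinetic energy $\gamma\mapsto\int_0^1\|\dot\gamma_t\|_B^2\dt$ in $C([0,1],B)$ then shows $\ppi$ is concentrated on $2$-a.c.\ curves valued in $X$, with
\[
\iint_0^1|\dot\gamma_t|^2\,\dt\,\d\ppi(\gamma)\leq\int_0^1|\dot\mu_t|^2\,\dt.
\]

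To identify the marginals, I observe that for every $t$ in the (countable, dense) union of partition points $(\e_t)_\sharp\ppi^n=\mu_t$, and this passes to the limit; continuity in $t$ of both sides in $W_2$ extends the identity to all $t\in[0,1]$. Combined with the marginal identity, the converse inequality $|\dot\mu_t|\leq\bigl(\int|\dot\gamma_t|^2\d\ppi\bigr)^{1/2}$ for a.e.\ $t$ follows from
\[
W_2^2(\mu_t,\mu_s)\leq\int\sfd^2(\gamma_t,\gamma_s)\,\d\ppi(\gamma)\leq(s-t)\int_t^s\!\!\int|\dot\gamma_r|^2\,\d\ppi(\gamma)\,\d r,
\]
and upon integration in $t$ yields the matching lower bound on the energy, forcing both the integral energy identity and the pointwise identity $|\dot\mu_t|^2=\int|\dot\gamma_t|^2\d\ppi$ for a.e.\ $t$.

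The main obstacle I anticipate is the tightness/compactness step in the ambient Banach space together with the identification that the limit lives on curves valued in $X$: the estimate above gives a uniform $L^2$ bound on ambient speeds, but one must upgrade uniform integrability of $\|\dot\gamma_t\|_B^2$ to genuine equicontinuity modulo small sets (essentially via Chebyshev and absolute continuity of $t\mapsto\int_0^t|\dot\mu_r|^2\d r$), and must ensure closedness of $X$ combined with density of partition times to exclude limit curves escaping $X$. Once these are in place, the rest of the argument is just lower/upper semicontinuity bookkeeping.
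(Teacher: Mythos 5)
The paper does not actually prove this proposition: it is quoted as the superposition principle of Lisini \cite{Lisini07} (with the Euclidean ancestor in \cite{AmbrosioGigliSavare08}), and your proposal is essentially a reconstruction of that argument — isometric embedding into a separable Banach space, gluing of optimal plans along a partition, piecewise affine interpolation, tightness, lower semicontinuity of the kinetic energy, and the converse estimate $W_2^2(\mu_t,\mu_s)\le (s-t)\int\!\!\int_t^s|\dot\gamma_r|^2\,\d r\,\d\ppi$ giving $|\dot\mu_t|^2\le\int|\dot\gamma_t|^2\,\d\ppi$ a.e. So the route is the right one, and the energy bookkeeping and the final identification of the metric speed are correct as you state them.

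Two points in your justification need repair, and the first is the one you yourself flag as the main obstacle, so be aware it is not merely technical bookkeeping. Tightness of $(\e_0)_\sharp\ppi^n=\mu_0$ plus the uniform bound on $\int_0^1\|\dot\gamma_t\|_B^2\,\dt$ does \emph{not} imply tightness of $(\ppi^n)$ when $B$ is infinite dimensional: these two conditions give uniform $\tfrac12$-H\"older equicontinuity, but the time slices are then only bounded, not precompact (take $B=\ell^2$, $\ppi^n=\delta_{\gamma^n}$ with $\gamma^n_t=te_n$: initial marginals are all $\delta_0$, energies are $1$, yet the family is not tight). The correct criterion requires equi-tightness of the time marginals $(\e_t)_\sharp\ppi^n$, and this is available here: $\{\mu_t\}_{t\in[0,1]}$ is $W_2$-compact, hence for every $\eps$ there is a compact $K_\eps\subset X$ with $\mu_t(X\setminus K_\eps)<\eps$ for all $t$, and since $(\e_t)_\sharp\ppi^n$ is supported on affine interpolants of pairs with marginals $\mu_{t_i^n},\mu_{t_{i+1}^n}$, it gives mass at least $1-2\eps$ to the closed convex hull of $K_\eps$, which is compact in $B$. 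This extra observation must be inserted, otherwise the compactness step fails as written. Second, the identification of the marginals of the limit should not rely on the exact identity $(\e_t)_\sharp\ppi^n=\mu_t$ at rational times: a fixed rational $t$ need not be a node of the $n$-th partition along the subsequence you extract. The clean fix is the estimate $W_2\big((\e_t)_\sharp\ppi^n,\mu_{t_i^n}\big)\le\big(\int \sfd^2(x_i,x_{i+1})\,\d\ggamma_i^n\big)^{1/2}=W_2(\mu_{t_i^n},\mu_{t_{i+1}^n})$, which tends to $0$ uniformly in $i$ by absolute continuity of $t\mapsto\int_0^t|\dot\mu_r|\,\d r$; combined with $W_2(\mu_{t_i^n},\mu_t)\to0$ this gives $(\e_t)_\sharp\ppi=\mu_t$ for \emph{every} $t\in[0,1]$, and then concentration on curves with values in the closed set $X$ follows from the marginals of the limit rather than from the partition points of the approximations. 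With these two corrections the proof goes through exactly as in \cite{Lisini07}.
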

Any plan $\ppi$ associated to the curve $(\mu_t)$ as in the above proposition will be called \emph{lifting} of $(\mu_t)$.

\subsection{Metric measure spaces and Sobolev functions}
Spaces of interest for this paper are metric measure spaces $(X,\sfd,\mm)$ which will always be assumed to satisfy:
\begin{itemize}
\item $(X,\sfd)$ is  complete and separable,
\item the measure $\mm$ is a non-negative and non-zero Radon measure on $X$.
\end{itemize}
We shall consider the structure $(X,\sfd,\mm)$ as given and often omit to highlight the explicit dependence on the distance and the measure of our constructions. For instance, we shall often denote by $W^{1,2}(X)$ the Sobolev space of real valued functions defined on $X$ (see below).

Given a curve $(\mu_t)\subset \prob X$ we shall say that it has \emph{bounded compression} provided there is $C>0$ such that $\mu_t\leq C\mm$ for every $t\in[0,1]$. Similarly, given $\ppi\in\prob{C([0,1],X)}$ we shall say that it has bounded compression provided $t\mapsto\mu_t:=(\e_t)_\sharp\ppi$ has bounded compression.

We shall now recall the definition of Sobolev functions `having distributional differential in $L^2(X,\mm)$'. The definition we adopt comes from \cite{Gigli12} which in turn is a reformulation of the one proposed in \cite{AmbrosioGigliSavare11}. For the proof that this approach produces the same concept as the one discussed in \cite{Heinonen07} and its references, see \cite{AmbrosioGigliSavare11}.

\begin{definition}[Test plans]
Let $(X,\sfd,\mm)$ be a metric measure space and $\ppi \in\prob{C([0,1],X)}$. We say  that $\ppi$ is a test plan provided it has bounded compression and
\[
\iint_0^1 |\dot{\gamma}_t|^2 \,\d t\,\d\ppi(\gamma) < +\infty.
\]
\end{definition}

\begin{definition}[The Sobolev class $\s^2(X)$]
Let $(X,\sfd,\mm)$ be a metric measure space. The Sobolev class $\s^2(X)=\s^2(X,\sfd,\mm)$ is the space of all Borel functions  $f : X \rightarrow \mathbb{R}$ such that  there exists a function $G\in L^2(X,\mm)$, $G\geq 0$ such that:
\[
\int |f(\gamma_1)- f(\gamma_0)|\, \d\ppi(\gamma) \leq \iint_0^1 G(\gamma_t)|\dot{\gamma}_t|\, \d t\, \d\ppi(\gamma),
\]
for every test plan $\ppi$. In this case, $G$ is called a weak upper gradient of $f$.
\end{definition}

It can be proved that for $f\in\s^2(X)$ there exists a minimal, in the $\mm$-a.e. sense, weak upper gradient: we shall denote it by $|Df|$.

Basic calculus rules for $|Df|$ are the following, all the expressions being intended $\mm$-a.e.:

\noindent\underline{Locality} For every $f,g\in\s^2(X)$ we have
\begin{align}
\label{eq:nullgrad}
\weakgrad f&=0,\qquad&&\textrm{on }f^{-1}(N),\qquad\forall N\subset \R\textrm{, Borel with }\mathcal L^1(N)=0,\\
\label{eq:localgrad}
\weakgrad f&=\weakgrad g,\qquad&&\mm-a.e. \ on \ \{f=g\}.
\end{align}

\noindent\underline{Weak gradients and local Lipschitz constants}. For any $f:X\to\R$ locally Lipschitz it holds
\begin{equation}
\label{eq:lipweak}
\weakgrad f\leq \lip(f).
\end{equation}
\noindent\underline{Vector space structure}. $\s^2(X)$ is a vector space and for every $f,g\in\s^2(X)$, $\alpha,\beta\in\R$ we have
\begin{equation}
\label{eq:vectorstru}
\weakgrad{(\alpha f+\beta g)}\leq |\alpha|\weakgrad f+|\beta|\weakgrad g.
\end{equation}

\noindent\underline{Algebra structure}.  $L^\infty\cap\s^2(X)$ is an algebra and for every $f,g\in L^\infty\cap\s^2(X)$ we have
\begin{equation}
\label{eq:leibbase}
\weakgrad{(fg)}\leq |f|\weakgrad g+|g|\weakgrad f.
\end{equation}
Similarly, if $f\in \s^2(X)$ and $g$ is  Lipschitz and bounded, then $fg\in\s^2(X)$ and the bound \eqref{eq:leibbase} holds.

\noindent\underline{Chain rule}. Let  $f\in \s^2(X)$ and $\varphi:\R\to \R$ Lipschitz. Then $\varphi\circ f\in \s^2(X)$ and
\begin{equation}
\label{eq:chainbase}
\weakgrad{(\varphi\circ f)}=|\varphi'|\circ f\weakgrad f,
\end{equation}
where $|\varphi'|\circ f$ is defined arbitrarily at points where $\varphi$ is not differentiable (observe that the identity \eqref{eq:nullgrad} ensures that on $f^{-1}(\mathcal N)$ both $\weakgrad{(\varphi\circ f)}$ and $\weakgrad f$ are 0 $ \mm$-a.e., $\mathcal N$ being the negligible set of points of non-differentiability of $\varphi$).

We equip $\s^2(X)$ with the seminorm $\|f\|_{\s^2}:=\||Df|\|_{L^2(X)}$ { and introduce the Sobolev space $W^{1,2}(X)=W^{1,2}(X,\sfd,\mm)$ as $W^{1,2}(X):=L^2\cap\s^2(X)$ equipped with the norm 
\[
\|f\|_{W^{1,2}}^2:=\|f\|^2_{L^2}+\|f \|^2_{\s^2}.
\]
We recall that $W^{1,2}(X)$ is a Banach space.}
In the following, we will sometimes need to work with spaces $(X,\sfd,\mm)$ such that $\s^2(X)$ is separable, we thus recall the following general criterion:
\begin{proposition}\label{prop:separ}
Let $(X,\sfd,\mm)$  be a metric measure space with $\mm$ giving finite mass to bounded sets. Assume that   $W^{1,2}(X,\sfd,\mm)$  is reflexive. Then $\s^2(X)$ is separable.

In particular, let $(X,\sfd,\mm)$ be a metric measure space satisfying  one of the following properties:
\begin{itemize} 
\item[i)] $(X,\sfd)$ is doubling, i.e. there is $N\in\N$ such that for any $r>0$ any ball of radius $2r$ can be covered by $N$ balls of radius $r$. 
\item[ii)] The seminorm $\|\cdot\|_{\s^2}$ satisfies the parallelogram rule, and $\mm$ gives finite mass to bounded sets.
\end{itemize}
Then $\s^2(X)$ is separable.
\end{proposition}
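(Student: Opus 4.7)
The plan is to reduce both listed special cases to the reflexivity hypothesis on $W^{1,2}(X)$ and then derive separability of $\s^2(X)$ from this reflexivity by combining a Mazur-type density argument in $W^{1,2}(X)$ with a truncation-and-cutoff reduction from $\s^2(X)$.

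For the case reductions: in case (ii), the parallelogram identity for $\|\cdot\|_{\s^2}$, together with the one for $\|\cdot\|_{L^2}$, makes $\|\cdot\|^2_{W^{1,2}}=\|\cdot\|^2_{L^2}+\|\cdot\|^2_{\s^2}$ satisfy the parallelogram identity, so $W^{1,2}(X)$ is a Hilbert space, in particular reflexive. In case (i), completeness of a doubling metric space gives properness of $(X,\sfd)$, so bounded sets are compact and the Radon measure $\mm$ is automatically finite on them; reflexivity of $W^{1,2}(X)$ under the doubling assumption is proved in \cite{AmbrosioGigliSavare11-2}.

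For the separability of $W^{1,2}(X)$: fix a reference point $\bar x\in X$ and a countable dense set $\{x_i\}\subset X$, and let $\mathcal{D}\subset W^{1,2}(X)$ be the countable collection of finite $\Q$-linear combinations of the bounded-support Lipschitz functions $x\mapsto (r-\sfd(x,x_i))^+$, with $r\in \Q_+$. Given $f\in W^{1,2}(X)$, the definition of the minimal weak upper gradient via relaxation of $\int\lip(\cdot)^2\,\d\mm$ provides a sequence of Lipschitz functions $g_n\to f$ in $L^2$ with $\limsup_n\int\lip(g_n)^2\,\d\mm\leq\int|Df|^2\,\d\mm$. Each $g_n$, having bounded support and being uniformly continuous, can be uniformly approximated on its support by elements of $\mathcal{D}$; since $\mm$ is finite on bounded sets and $\weakgrad{\cdot}\leq\lip(\cdot)$ by \eqref{eq:lipweak}, these approximants yield a sequence $(f_n)\subset\mathcal{D}$ with $f_n\to f$ in $L^2$ and $(f_n)$ bounded in $W^{1,2}(X)$. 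Reflexivity produces a weakly $W^{1,2}$-convergent subsequence (whose limit is necessarily $f$), and Mazur's theorem then delivers $\Q$-convex combinations of elements of $\mathcal{D}$ converging strongly to $f$ in $W^{1,2}(X)$. Hence $W^{1,2}(X)$ is separable.

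Finally, to pass from $W^{1,2}(X)$ to $\s^2(X)$, given $f\in\s^2(X)$ the truncations $T_k f:=\max(-k,\min(k,f))$ satisfy $|D(T_k f - f)|^2=\mathbf{1}_{\{|f|>k\}}|Df|^2$ by \eqref{eq:chainbase}, so by dominated convergence $T_k f\to f$ in the $\s^2$-seminorm as $k\to\infty$. Multiplying $T_k f$ by a Lipschitz cutoff $\chi_{R,K}$ equal to $1$ on $B_R(\bar x)$, vanishing outside $B_{R+K}(\bar x)$ and with Lipschitz constant $1/K$, the Leibniz rule \eqref{eq:leibbase} gives
\[
\int|D(\chi_{R,K} T_k f - T_k f)|^2\,\d\mm\leq 2\int_{B_R^c}|DT_k f|^2\,\d\mm+\frac{2k^2\,\mm(B_{R+K})}{K^2},
\]
the first term vanishing for $R$ large, and the second being made small by a suitable choice of $K$, using only that $\mm$ is finite on bounded sets. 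Since $\chi_{R,K} T_k f\in W^{1,2}(X)$, the countable $W^{1,2}$-dense subset constructed above also yields a countable subset of $\s^2(X)$ that is dense in the $\s^2$-seminorm. The central obstacle is the Mazur step: reflexivity is needed to upgrade the weak-type convergence of Lipschitz approximants to strong $W^{1,2}$-convergence of their convex combinations; the rest is standard localization.
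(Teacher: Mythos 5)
Your overall architecture matches the paper's: reduce cases (i) and (ii) to reflexivity of $W^{1,2}(X)$, get separability of $W^{1,2}(X)$ from reflexivity, and then pass from $W^{1,2}(X)$ to $\s^2(X)$ by truncation and cutoff. The difference is that the paper simply cites \cite{AmbrosioColomboDimarino12} for the implication ``$W^{1,2}(X)$ reflexive $\Rightarrow$ $W^{1,2}(X)$ separable'' (and also for ``doubling $\Rightarrow$ reflexive'', which you instead attribute to the wrong reference), whereas you attempt to prove that implication directly. It is in that attempted proof that there is a genuine gap. You construct a countable family $\mathcal D$ of $\Q$-linear combinations of tent functions, uniformly approximate the Lipschitz approximants $g_n$ of $f$ by elements of $\mathcal D$, and then assert that the resulting sequence is \emph{bounded in $W^{1,2}(X)$}. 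Nothing supports that assertion: uniform closeness gives no control whatsoever on Lipschitz constants (an element of $\mathcal D$ that is uniformly close to $g_n$ may oscillate at small scales with arbitrarily large slope), and \eqref{eq:lipweak} only transfers a Lipschitz bound you do not have. Without a uniform bound on $\|f_n\|_{W^{1,2}}$ you cannot extract a weakly convergent subsequence, so the reflexivity/Mazur step never gets off the ground. (A secondary problem: on a general metric space, $\Q$-\emph{linear} spans of the functions $(r-\sfd(\cdot,x_i))^+$ are not dense in the Lipschitz functions; one needs lattice operations, e.g.\ McShane-type approximants $\min_i\bigl(q_i+L\,\sfd(\cdot,x_i)\bigr)$, which is precisely how one also gets the missing Lipschitz bound $\leq L$. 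This is essentially the route of \cite{AmbrosioColomboDimarino12}.) You also silently use the density-in-energy of Lipschitz functions, i.e.\ the equivalence of the relaxation definition of $|Df|$ with the test-plan definition adopted here; that is a theorem of \cite{AmbrosioGigliSavare11}, not part of the definition, and should be cited.

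Two smaller points on the localization step. Your truncation argument via \eqref{eq:chainbase} is fine and matches the paper. In the cutoff step, however, your explicit error term $2k^2\,\mm(B_{R+K})/K^2$ is \emph{not} ``made small by a suitable choice of $K$ using only that $\mm$ is finite on bounded sets'': if $\mm(B_r)$ grows faster than quadratically in $r$, this quantity need not tend to $0$ as $K\to\infty$. The paper's own write-up of this step (1-Lipschitz cutoffs plus locality) is equally terse, so I flag this as a shared weakness rather than an error specific to you; but since you wrote the estimate out, you should either justify the choice of cutoff or restrict the claim. The reduction of case (ii) to the Hilbert/reflexive setting and the observation that doubling plus completeness gives properness (hence finiteness of $\mm$ on bounded sets) are both correct.
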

\begin{proof}
In \cite{AmbrosioColomboDimarino12} it has been proved that if $W^{1,2}(X)$ is reflexive, then it is separable.

Thus to conclude it is sufficient to show that if $W^{1,2}(X)$ is separable and $\mm$ gives finite mass to bounded sets (this being trivially true in the case $(i)$), then $\s^2(X)$ is separable as well. To this aim, let $f\in\s^2(X)$, consider the truncated functions $f_n:=\min\{n,\max\{-n,f\}\}$ and notice that thanks to \eqref{eq:chains} we have $\|f_n-f\|_{\s^2}\to 0$ as $n\to\infty$. Thus we can reduce to consider the case of $f\in L^\infty\cap\s^2(X)$. Let $B_n\subset X$ be a nondecreasing sequence of  bounded sets covering $X$, and for each $n\in\N$, $\nchi_n:X\to[0,1]$ a 1-Lipschitz function with bounded support and identically 1 on $B_n$. For $f\in L^\infty\cap \s^2(X)$, by \eqref{eq:leibbase} we have $f\nchi_n\in L^\infty\cap \s^2(X)$ as well and furthermore $\supp(\nchi_nf)$ is bounded.  Given that $\mm$ gives finite mass to bounded sets we deduce that $f\nchi_n\in W^{1,2}(X)$, and the locality property \eqref{eq:localgrad} ensures that $\|\nchi_nf-f\|_{\s^2}\to 0$ as $n\to\infty$.

At last, still in \cite{AmbrosioColomboDimarino12}, it has been shown that if $(X,\sfd)$ is doubling, then $W^{1,2}(X)$ is reflexive. On the other hand, if $(ii)$ holds, then it is obvious that  $W^{1,2}(X)$ is Hilbert, and hence reflexive. Therefore $\s^2(X)$ is separable if $(i)$ or $(ii)$ holds.
\end{proof}

\subsection{Hopf-Lax formula and Hamilton-Jacobi equation}
Here we recall the main properties of the Hopf-Lax formula and its link with the Hamilton-Jacobi equation in a metric setting. For a proof of these results see \cite{AmbrosioGigliSavare11}.

\begin{definition}[Hopf-Lax formula]
Given $f: X \to \mathbb{R}$ a function and $t>0$ we define  $Q_tf : X \to \mathbb{R}\cup\{-\infty\}$ as
\[
Q_tf(x):=\inf_{y\in X} f(y)+\frac{\sfd^2(x,y)}2.
\]
We also put $Q_0 f:= f$.
\end{definition}

\begin{proposition}[Basic properties of the Hopf-Lax formula]\label{prop:HL}
Let $f:X\to\R$ be Lipschitz and bounded. Then the following hold.
\begin{itemize}
\item[i)] For every $t\geq 0$ we have $\Lip(Q_tf)\leq 2\Lip(f)$.
\item[ii)] For every $x\in X$ the map $[0,\infty)\ni t\mapsto Q_tf(x)$ is continuous, locally semiconcave on $(0,\infty)$ and the inequality
\[
\frac{\d}{\d t} Q_tf(x) +\frac{\lip(Q_tf)^2(x)}{2} \leq 0,
\]
holds for every $t\in(0,\infty)$ with at most a countable number of exceptions.
\item[iii)] The map $(0,\infty)\times X\ni (t,x)\mapsto \lip(Q_tf)(x)$ is upper-semicontinuous.
\end{itemize}
\end{proposition}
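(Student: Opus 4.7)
My strategy is to reduce all three statements to structural properties of the maximal and minimal displacement of nearly optimal points in the Hopf--Lax infimum. For each $(t,x)\in(0,\infty)\times X$ define
\[
D^+(x,t):=\sup\Big\{\limsup_{n\to\infty}\sfd(x,y_n):(y_n)\subset X,\ f(y_n)+\tfrac{\sfd^2(x,y_n)}{2t}\to Q_tf(x)\Big\},
\]
with $D^-(x,t)$ the analogous quantity with $\inf$ and $\liminf$. The crucial \emph{a priori} estimate is that any $\epsilon$-minimizer $y$ of $Q_tf(x)$ satisfies $\sfd(x,y)\leq 2t\Lip(f)+o_\epsilon(1)$: from $Q_tf(x)\leq f(x)$ (take $y=x$) together with $f(y)\geq f(x)-\Lip(f)\sfd(x,y)$ one deduces $\sfd^2(x,y)/(2t)\leq\Lip(f)\sfd(x,y)+\epsilon$, whence $D^+(x,t)\leq 2t\Lip(f)$.

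For \emph{(i)}, I use an $\epsilon$-minimizer $y$ at $x$ as competitor at $x'$: by the triangle inequality applied to $\sfd^2(x',y)-\sfd^2(x,y)$,
\[
Q_tf(x')-Q_tf(x)\leq \frac{\sfd(x,y)\,\sfd(x,x')}{t}+\frac{\sfd^2(x,x')}{2t}+\epsilon,
\]
and letting $\sfd(x,x')\to 0$, then $\epsilon\to 0$, combined with the bound on $\sfd(x,y)$, delivers $\lip(Q_tf)(x)\leq 2\Lip(f)$ pointwise; the global statement then follows from the identity $\Lip=\sup\lip$ valid in geodesic spaces.

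For \emph{(ii)}, the key observation is that for each fixed $y$ the map $t\mapsto f(y)+\sfd^2(x,y)/(2t)$ is affine in $\tau:=1/t$, so $\tau\mapsto Q_{1/\tau}f(x)$ is concave on $(0,\infty)$ as an infimum of affine functions. Concavity automatically yields continuity, local semiconcavity, and differentiability outside a countable set of times, and continuity at $t=0^+$ follows from $D^+(x,t)\leq 2t\Lip(f)\to 0$. A direct inspection of difference quotients using $\epsilon$-minimizers identifies the one-sided derivatives of $t\mapsto Q_tf(x)$ in terms of $-D^{\pm}(x,t)^2/(2t^2)$, with $D^+=D^-$ at points of differentiability. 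Specialising the estimate of (i) by sending $x'\to x$ and choosing $\epsilon$-minimizers realising $\sfd(x,y)\to D^-(x,t)$ gives $\lip(Q_tf)(x)\leq D^-(x,t)/t$; substituting into the derivative identity produces the Hamilton--Jacobi inequality at every point of differentiability.

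For \emph{(iii)}, I reduce upper semicontinuity of $\lip(Q_tf)$ to upper semicontinuity of $(t,x)\mapsto D^+(x,t)$, combined with the equality $\lip(Q_tf)(x)=D^+(x,t)/t$ (which holds in geodesic spaces, proved by moving $x$ along a geodesic toward a near-maximal $y$). Given $(t_n,x_n)\to(t,x)$ and $\epsilon_n$-minimizers $y_n$ of $Q_{t_n}f(x_n)$, the joint continuity of $(t,x)\mapsto Q_tf(x)$ (itself obtained from (i) and (ii)) together with the uniform continuity of $(t,x,y)\mapsto f(y)+\sfd^2(x,y)/(2t)$ in $y$ on bounded sets shows that the functional values $f(y_n)+\sfd^2(x,y_n)/(2t)$ converge to $Q_tf(x)$, so $(y_n)$ is a minimizing sequence for the limiting problem and hence $\limsup_n\sfd(x_n,y_n)\leq D^+(x,t)$. \textbf{The main obstacle is precisely this final step:} since $X$ need not be locally compact, one cannot select a converging subsequence of $(y_n)$ in $X$, and the argument must proceed \emph{without} ever producing a limit point $y\in X$, relying exclusively on the a priori bound $\sfd(x_n,y_n)\leq 2t_n\Lip(f)$ and convergence of functional values to identify $(y_n)$ as minimizing for $Q_tf(x)$.
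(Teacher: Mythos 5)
The paper does not actually prove this proposition---it defers entirely to \cite{AmbrosioGigliSavare11}---so the relevant comparison is with the standard argument there, whose skeleton (the quantities $D^\pm(x,t)$, the a priori bound $D^+(x,t)\le 2t\Lip(f)$ on near-minimizers, the identification of the one-sided time derivatives with $-D^\pm(x,t)^2/(2t^2)$, and a pointwise slope bound in terms of $D^+$) your plan correctly reproduces. You are also right to read the Hopf--Lax formula as $\inf_y f(y)+\sfd^2(x,y)/(2t)$: the paper's displayed definition omits the $t$ in the denominator, and with that literal definition the statement would be vacuous.

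There are, however, two genuine gaps. First, the inequality $\lip(Q_tf)(x)\le D^-(x,t)/t$ is false. Take $X=\R$, $a>0$, $f(y)=\max\{-4a,\min\{0,a-|y|\}\}$, $x=0$, $t=2a$: then $Q_{2a}f(0)=0$ with minimizing sequences concentrating both at $y\to 0$ and at $|y|\to 2a$, so $D^-(0,2a)=0$, while $Q_{2a}f(x')=-|x'|$ near the origin, so $\lip(Q_{2a}f)(0)=1$. The structural reason is that your competitor argument (a near-minimizer at $x$ tested at $x'$) only controls the \emph{ascending} part $Q_tf(x')-Q_tf(x)$ of the slope; to control $Q_tf(x)-Q_tf(x')$ one must use near-minimizers at $x'$ and show that, as $x'\to x$, these become almost-minimizers for $x$ as well (using the already established Lipschitz continuity of $Q_tf$), which yields the correct bound $\lip(Q_tf)(x)\le D^+(x,t)/t$. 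The conclusion of (ii) survives because $\{t:D^+(x,t)\ne D^-(x,t)\}$ is countable, but the lemma as you state it is wrong and the proof of the correct one is absent. Second, both (i) and (iii) lean on a geodesic/length structure that is not among the paper's standing hypotheses ($X$ is only complete and separable): in (i) you invoke $\Lip=\sup_x\lip$, and in (iii) you invoke the equality $\lip(Q_tf)(x)=D^+(x,t)/t$, whose ``$\ge$'' half is obtained by sliding $x$ along a geodesic towards a near-maximal minimizer. The global bound $\Lip(Q_tf)\le 2\Lip(f)$ does hold in arbitrary metric spaces, but it requires combining your competitor estimate with a second one (e.g. $Q_tf(x')\le f(x')$ together with $Q_tf(x)\ge f(y)+\sfd^2(x,y)/(2t)-\eps$ and $f(x')\ge f(y)-\Lip(f)\,\sfd(x',y)$), the two regimes covering all values of $\sfd(x,x')$; and without the reverse inequality $D^+(x,t)/t\le\lip(Q_tf)(x)$ at the limit point, upper semicontinuity of $D^+$ does not transfer to $\lip(Q_tf)$, so your reduction in (iii) does not close in the stated generality.
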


\section{The continuity equation $\partial_t\mu_t=L_t$}

\subsection{Some definitions and conventions}\label{se:def}

Let $\mu\in\probt X$ be such that $\mu\leq C\mm$ for some $C>0$. We define the seminorm $\|\cdot\|_\mu$ on $\s^2(X)$ as
\[
\|f\|_\mu^2:=\int|Df|^2\,\d\mu.
\]

\begin{definition}[The cotangent space ${\rm CoTan}_\mu(X)$]
For  $\mu\in\probt X$ with $\mu\leq C\mm$ for some $C>0$ consider the quotient space $\s^2(X)/\sim_\mu$, where $f\sim_\mu g$ if $\|f-g\|_\mu=0$.

The cotangent space  ${\rm CoTan}_\mu(X)$ is then defined as the completion of $\s^2(X)/\sim_\mu$ w.r.t. its natural norm. The norm on ${\rm CoTan}_\mu(X)$ will still be denoted by $\|\cdot\|_\mu$.
\end{definition}

Given a linear map $L:\s^2(X)\to\R$ and $\mu$ as above, we denote by $\|L\|_\mu^*\geq 0$ the quantity given by
\[
\frac12(\|L\|_\mu^*)^2:=\sup_{f\in\s^2(X)}L(f)-\frac12\|f\|_\mu^2.
\]
Linear operators   $L:\s^2(X)\to\R$  such that  $\|L\|_\mu^*<\infty$ are in 1-1 correspondence with elements of the dual of ${\rm CoTan}_\mu(X)$. Abusing a bit the notation, we will often identify such operators $L$ with the induced linear mapping on ${\rm CoTan}_\mu(X)$.

\subsection{A localization argument}
In this section $(\mu_t)\subset \probt X$ is a given $W_2$-continuous curve with bounded compression and we consider a functional $L:\s^2(X)\to L^1(0,1)$ satisfying the inequality
\[
\int_t^s L(f)(r)\,d r\leq \sqrt{\int_t^s G^2_r\,\d r}\sqrt{\int_t^s\|f\|_{\mu_r}^2\,\d r}
\]
{ for some}  $G\in L^2(0,1)$, for every $f\in\s^2(X)$ and $t,s\in[0,1]$, $t<s$. The question we address is up to what extent we can deduce that for such $L$ there are operators $L_t:\s^2(X)\to\R$ such that $L(f)(t)=L_t(f)$ for $\mathcal L^1$-a.e. $t\in[0,1]$. We will see in a moment that this is always the case in an appropriate sense, but to deal with the case of $\s^2(X)$ non separable we need to pay some attention to the set of Lebesgue points of $L(f)$.

Thus for given  $g\in L^1(0,1)$ we shall denote by $\Leb(g)\subset(0,1)$ the set of $t$'s such that {
\[
\begin{split}
\lims_{\eps_1,\eps_2\downarrow0}\dashint_{t-\eps_1}^{t+\eps_1}\dashint_{t-\eps_2}^{t+\eps_2}|g_s-g_r|\,\d s\,\d r=0.
\end{split}
\]
Clearly the set $\Leb(g)$ contains all the Lebesgue points of any representative of $g$ (notice that the usual definition of Lebesgue point is sensible to the chosen representative) and in particular we have $\mathcal L^1(\Leb(g))=1$. For $t\in\Leb(g)$ the limit of
\[
\dashint_{t-\eps}^{t+\eps}g_s\,\d s,
\]
as $\eps\downarrow0$ exists and is finite.  We shall denote its value by $\bar g$, so that $\bar g:\Leb(g)\to\R$ is a well chosen representative of $g$ everywhere defined on $\Leb(g)$.}

We then have the following result.
\begin{lemma}\label{le:local}
Let $(\mu_t)\subset\probt X$ be a $W_2$-continuous curve of bounded compression and  $L:\s^2(X)\to L^1(0,1)$ be a linear map such that for some $G\in L^2(0,1)$ the inequality
\begin{equation}\label{eq:forlocal}
\int_t^s L(f)(r)\,d r\leq \sqrt{\int_t^s G^2_r\,\d r}\sqrt{\int_t^s\|f\|_{\mu_r}^2\,\d r},\qquad \forall t,s\in[0,1],\ t<s,\ \forall f\in\s^2(X),
\end{equation}
holds.

Then there exists a family $\{{L}_t\}_{t\in[0,1]}$ of maps from $\s^2(X)$ to $\R$ such that for any $f\in\s^2(X)$ we have
\begin{align}
\label{eq:uniqueness}
L(f)(t)&=L_t(f),  &&a.e.\  t\in[0,1],\\
\label{eq:bound}
|L_t(f)|&\leq |G_t|\|f\|_{\mu_t}, &&a.e.\  t\in[0,1].
\end{align}
\end{lemma}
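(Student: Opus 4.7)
The plan is to define $L_t(f)$ separately for each $f\in\s^2(X)$ via Lebesgue differentiation of $L(f)\in L^1(0,1)$. Both conclusions \eqref{eq:uniqueness} and \eqref{eq:bound} are $t$-a.e.\ statements in which the exceptional set is allowed to depend on $f$, so no uniform choice across $\s^2(X)$ is required and the possible non-separability of $\s^2(X)$ plays no role.

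First I would verify that for each $f\in\s^2(X)$ the real-valued function $r\mapsto\|f\|_{\mu_r}^2=\int|Df|^2\,\d\mu_r$ is Borel and bounded on $[0,1]$. Boundedness is immediate from $\mu_r\leq C\mm$, which gives $\|f\|_{\mu_r}^2\leq C\int|Df|^2\,\d\mm<\infty$ for every $r$. For measurability, approximate $|Df|^2$ in $L^1(X,\mm)$ by bounded continuous functions $h_n$; the maps $r\mapsto\int h_n\,\d\mu_r$ are continuous thanks to the $W_2$-continuity of $(\mu_r)$, and since $\mu_r\leq C\mm$ they converge uniformly in $r$ to $r\mapsto\|f\|_{\mu_r}^2$, which is therefore measurable. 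In particular, $r\mapsto\|f\|_{\mu_r}^2$ belongs to $L^\infty(0,1)\subset L^1(0,1)$ and the set $\Leb(\|f\|_{\mu_\cdot}^2)$, introduced before the statement, is well-defined and of full Lebesgue measure.

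Next, for each $f\in\s^2(X)$ set $\Omega_f:=\Leb(L(f))\cap\Leb(G^2)\cap\Leb(\|f\|_{\mu_\cdot}^2)$, which has full Lebesgue measure in $[0,1]$, and define
\[
L_t(f):=\overline{L(f)}(t)\quad\mbox{for $t\in\Omega_f$},\qquad L_t(f):=0\quad\mbox{for $t\notin\Omega_f$}.
\]
Identity \eqref{eq:uniqueness} is then immediate because $\overline{L(f)}=L(f)$ a.e. For \eqref{eq:bound}, fix $t\in\Omega_f$ and $\eps>0$ small; applying the hypothesis \eqref{eq:forlocal} to the interval $[t-\eps,t+\eps]$ and dividing through by $2\eps$ yields
\[
\dashint_{t-\eps}^{t+\eps}L(f)(r)\,\d r\leq\sqrt{\dashint_{t-\eps}^{t+\eps}G_r^2\,\d r}\,\sqrt{\dashint_{t-\eps}^{t+\eps}\|f\|_{\mu_r}^2\,\d r}.
\]
Letting $\eps\downarrow 0$ and exploiting $t\in\Omega_f$, each of the two factors on the right converges to $|G_t|$, resp.\ $\|f\|_{\mu_t}$ (interpreted via the $\overline{\cdot}$-representative), while the left side tends to $\overline{L(f)}(t)=L_t(f)$. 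Substituting $-f$ for $f$, and using linearity of $L$ to infer $\Omega_{-f}=\Omega_f$ and $\overline{L(-f)}(t)=-L_t(f)$, one obtains $-L_t(f)\leq|G_t|\|f\|_{\mu_t}$, hence $|L_t(f)|\leq|G_t|\|f\|_{\mu_t}$ on the full-measure set $\Omega_f$.

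The only genuinely delicate step is the measurability of $r\mapsto\|f\|_{\mu_r}^2$, which is what forces us to use together the bounded-compression assumption $\mu_r\leq C\mm$ and the $W_2$-continuity of $(\mu_r)$; once that is in hand, the whole lemma reduces to one per-$f$ application of Lebesgue differentiation to the dual pairing appearing in \eqref{eq:forlocal}.
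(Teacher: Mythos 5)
Your proof is correct for the lemma as literally stated, but it takes a genuinely different route from the paper's. You define $L_t(f)$ separately for each fixed $f$ by Lebesgue differentiation of $L(f)$ on a per-$f$ full-measure set $\Omega_f$ (setting $L_t(f)=0$ elsewhere); the only shared ingredient with the paper is the observation that bounded compression plus $W_2$-continuity controls $r\mapsto\|f\|_{\mu_r}^2$ (your uniform approximation argument in fact gives continuity, not just measurability, of this map, which is exactly what the paper uses). The paper instead fixes $t\in\Leb(G^2)$, notes that $V_t:=\{f\in\s^2(X):\,t\in\Leb(L(f))\}$ is a vector space on which $f\mapsto\overline{L(f)}(t)$ is linear with norm at most $\sqrt{\overline{G^2_t}}$, and then extends this functional to all of $\s^2(X)$ by Hahn--Banach. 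The difference is not cosmetic: the paper's construction yields, for every $t$ in a single full-measure set, a \emph{linear} map $L_t$ satisfying $|L_t(f)|\leq\sqrt{\overline{G^2_t}}\,\|f\|_{\mu_t}$ simultaneously for all $f$, hence an element of the dual of ${\rm CoTan}_{\mu_t}(X)$ in the sense of Section \ref{se:def}, which is how the $L_t$'s are meant to be read in $\partial_t\mu_t=L_t$ and in the later identification of $\|L_t\|_{\mu_t}^*$ with $|\dot\mu_t|$ (Benamou--Brenier formula). Your maps $L_t$ are in general not linear: additivity is only available where the sets $\Omega_f$, $\Omega_g$, $\Omega_{f+g}$ overlap, and in the non-separable case there is no single full-measure set of $t$'s working for all $f$ --- this is precisely the obstruction the Hahn--Banach step overcomes (and which Proposition \ref{cor:separ} circumvents via separability). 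Since \eqref{eq:uniqueness}--\eqref{eq:bound} are per-$f$ almost-everywhere statements and the definition of a solution of the continuity equation only involves the essential supremum $N_t$, your weaker construction does prove the lemma and suffices for Theorem \ref{thm:main}; but your remark that non-separability ``plays no role'' is accurate only because you give up linearity, which is what the paper's extra step buys.
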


\begin{remark}
As a direct consequence of \eqref{eq:uniqueness}, if $\{{\tilde L}_t\}_{t\in[0,1]}$ is another family of maps satisfying the above, then for every $f\in \s^2(X)$ we have $L_t(f)=\tilde  L_t(f)$ for a.e. $t\in[0,1]$. 
\end{remark}

\begin{proof}

For $f\in \s^2(X)$ consider the set $\Leb({L}(f))$ and for $t\in(0,1) $ let ${V}_t\subset \s^2(X)$ be the set of $f$'s in $\s^2(X)$ such that $t\in \Leb({L}(f))$. The trivial inclusion
\[
\Leb(\alpha_1g_1+\alpha_2g_2)\supset\Leb(g_1)\cap\Leb(g_2),
\]
valid for any $g_1,g_2\in L^1(0,1)$ and $\alpha_1,\alpha_2\in\R$ and the linearity of $L$ grant that  ${V}_t$ is a vector space for every $t\in(0,1)$.

The $W_2$-continuity of $(\mu_t)$ grants in particular continuity w.r.t. convergence in duality with $C_b(X)$ and the further assumption that $\mu_t\leq C\mm$ for any $t\in[0,1]$ ensures continuity  w.r.t. convergence in duality with $L^1(X,\mm)$. Thus the map  $t\mapsto\int |Df|^2\,\d\mu_t$ is continuous for any $f\in \s^2(X)$. Hence from inequality \eqref{eq:forlocal} we deduce that for any $f\in\s^2(X)$ it holds
\[
|\overline{L(f)}(t)|\leq \sqrt{\overline{G^2_t}}\, \|f\|_{\mu_t},\qquad\forall t\in \Leb({L}(f))\cap \Leb(G^2),
\]
which we can rewrite as: for any $t\in \Leb(G^2)$ it holds
\[
|\overline{L(f)}(t)|\leq  \sqrt{\overline{G^2_t}}\, \|f\|_{\mu_t},\qquad\forall f\in V_t.
\]
In other words, for any $t\in \Leb(G^2)$ the map $V_t\ni f\mapsto L_t(f):=\overline{L(f)}(t)$ is a well defined linear map from $V_t$ to $\mathbb{R}$ with norm bounded by $ \sqrt{\overline{G^2_t}}$.

By Hahn-Banach we can extend this map to a map from $S^2$ to $\mathbb{R}$ with norm bounded by $G(t)$. Noticing that by construction we have $f\in \bar{V}_t$ for a.e. $t\in[0,1]$ for any $f\in S^2$, the family of maps $\bar{L}_t$ fulfill the thesis. To conclude notice that trivially it holds $ \sqrt{\overline{G^2_t}}=|G_t|$ for $\mathcal L^1$-a.e. $t$.
\end{proof}

\subsection{Main theorem}
{{We recall that given measurable maps $g_i:[0,1]\to \R\cup\{\pm\infty\}$ parametrized by  $i\in I$, where $I$ is a non-necessarily countable family of indexes, the essential supremum $\esssup_ig_i:[0,1]\to\R\cup\{\pm\infty\}$ is the (unique up to $\mathcal L$-a.e. equality) function $g$ such that
\[
\begin{split}
g&\geq g_i,\quad\mathcal L-a.e.\ \qquad\forall i\in I,\\
\tilde g&\geq g_i,\quad\mathcal L-a.e.\ \qquad\forall i\in I,\qquad\Rightarrow \qquad g\leq \tilde g,\quad\mathcal L-a.e..
\end{split}
\]}}

We start giving the definition of `distributional' solutions of the continuity equation in our setting:
\begin{definition}[Solutions of $\partial_t\mu_t=L_t$]\label{def:solcont}
Let $(X,\sfd,\mm)$ be a metric measure space, $(\mu_t)\subset\probt X$ a $W_2$-continuous curve with bounded compression and  $ \{L_t\}_{t\in[0,1]}$ a family of maps from $\s^2(X)$ to $\R$.

We say that $(\mu_t)$ solves the continuity equation
\begin{equation}
\label{eq:basecont}
\partial_t\mu_t=L_t,
\end{equation}
provided:
\begin{itemize}
\item[i)] for every $f\in\s^2(X)$ the map $t\mapsto L_t(f)$ is measurable and the map $N:[0,1]\to[0,\infty]$ defined by
\begin{equation}
\label{eq:normt}
\frac12N^2_t:=\esssup_{f\in\s^2(X)}L_t(f)-\frac12\|f\|_{\mu_t}^2,
\end{equation}
belongs to $L^2(0,1)$, i.e. for any $f$, $\frac12N^2_t \geq L_t(f)-\frac12\|f\|_{\mu_t}^2$  { for }a.e.\ $t$ and for any other $\bar{N}_t$ { having} this property, we have
$N_t \leq \bar{N}_t$ for a.e. $t$.

\item[ii)]  for every $f\in L^1\cap\s^2(X)$ the map $t\mapsto \int f\,\d\mu_t$ is in absolutely continuous and the identity
\[
\frac{\d}{\d t}\int f\,\d\mu_t=L_t(f),
\]
holds for a.e. $t$.
\end{itemize}
\end{definition}
Evidently,  if $\{\tilde L_t\}_{t\in[0,1]}$ is another family of maps such that $(\mu_t)$ solves $\partial_t\mu_t=\tilde L_t$,  then for every $f\in L^1\cap\s^2(X)$ we have
\[
L_t(f)=\tilde L_t(f),\qquad a.e.\ t\in[0,1].
\]
In this sense, for a given solution of the continuity equation the family $\{L_t\}_{t\in[0,1]}$ is essentially uniquely defined.

Our main result is that for curves of bounded compression, the continuity equation characterizes 2-absolute continuity.
\begin{theorem}\label{thm:main}
Let $(\mu_t)\subset \probt X$ be a $W_2$-continuous curve with bounded compression. Then the following are equivalent.
\begin{itemize}
\item[i)]  $(\mu_t)$ is  2-absolutely continuous w.r.t. $W_2$.
\item[ii)] There is a family of maps $\{L_t\}_{t\in[0,1]}$ from $\s^2(X)$ to $\R$ such that $(\mu_t)$ solves the continuity equation \eqref{eq:basecont}.
\end{itemize}
If these hold, we have
\[
N_t=|\dot\mu_t|,\qquad a.e.\ t\in[0,1].
\]
\end{theorem}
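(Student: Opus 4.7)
My plan is to prove the two implications separately, following the strategy sketched in the introduction. The forward direction $(i)\Rightarrow(ii)$ is the easier one and relies on the superposition principle together with the very definition of $\s^2(X)$, while the reverse direction $(ii)\Rightarrow(i)$ uses Kuwada's duality argument with the Hopf-Lax semigroup. At the end, comparing the bounds obtained in the two directions will yield the identification $N_t=|\dot\mu_t|$ a.e.

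For $(i)\Rightarrow(ii)$, I would first apply the superposition principle (Lisini's theorem stated above) to get a lifting $\ppi\in\prob{C([0,1],X)}$ with $(\e_t)_\sharp\ppi=\mu_t$, $\int|\dot\gamma_r|^2\,\d\ppi=|\dot\mu_r|^2$ a.e., and bounded compression so that $\ppi$ is actually a test plan on every subinterval. Then for any $f\in\s^2(X)$ and $t<s$, applying the Sobolev definition to $({\rm restr}_t^s)_\sharp\ppi$ and Cauchy--Schwarz gives
\[
\Bigl|\int f\,\d\mu_s-\int f\,\d\mu_t\Bigr|\le\iint_t^s|Df|(\gamma_r)|\dot\gamma_r|\,\d r\,\d\ppi\le\sqrt{\int_t^s\|f\|_{\mu_r}^2\,\d r}\sqrt{\int_t^s|\dot\mu_r|^2\,\d r}.
\]
This shows $t\mapsto\int f\,\d\mu_t$ is absolutely continuous for every $f\in L^1\cap\s^2(X)$, and defining $L(f)(t):=\frac\d{\dt}\int f\,\d\mu_t$ (well-defined a.e.\ and measurable by difference quotients) yields a linear map $L:\s^2(X)\to L^1(0,1)$ satisfying hypothesis \eqref{eq:forlocal} of Lemma~\ref{le:local} with $G_r=|\dot\mu_r|$. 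That lemma produces the desired family $\{L_t\}$ with $|L_t(f)|\le|\dot\mu_t|\|f\|_{\mu_t}$ a.e., which immediately gives $N_t\le|\dot\mu_t|$ a.e.

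For $(ii)\Rightarrow(i)$ I would implement Kuwada's argument rigorously. Fix a Lipschitz and bounded $\varphi:X\to\R$ and $0\le t<s\le1$; using Proposition~\ref{prop:HL}, $Q_r\varphi$ is uniformly Lipschitz and bounded on bounded time intervals. Formally the computation from the introduction reads
\[
\int Q_s\varphi\,\d\mu_s-\int Q_t\varphi\,\d\mu_t=\int_t^s\Bigl(\int \tfrac{\d}{\d r}Q_r\varphi\,\d\mu_r+L_r(Q_r\varphi)\Bigr)\d r,
\]
and combining the Hamilton--Jacobi inequality $\frac\d{\d r}Q_r\varphi\le-\tfrac12\lip(Q_r\varphi)^2$, the inequality $|DQ_r\varphi|\le\lip(Q_r\varphi)$ $\mm$-a.e., and the Young-type bound $L_r(Q_r\varphi)\le\tfrac12 N_r^2+\tfrac12\|Q_r\varphi\|_{\mu_r}^2$ produces
\[
\int Q_s\varphi\,\d\mu_s-\int Q_t\varphi\,\d\mu_t\le\tfrac12\int_t^s N_r^2\,\d r.
\]
Taking the supremum over $\varphi$ and using the Kantorovich duality \eqref{eq:dual} (noting $Q_s\varphi\ge-(-\varphi)^c$ and in fact $\sup_\varphi$ equals $\tfrac12 W_2^2$ via \eqref{eq:dualintro}) yields $\tfrac12 W_2^2(\mu_t,\mu_s)\le\tfrac12\int_t^s N_r^2\,\d r$, hence $|\dot\mu_r|\le N_r$ a.e. This proves 2-absolute continuity and, combined with the previous direction, the equality $N_t=|\dot\mu_t|$.

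The main technical obstacle is making the above Kuwada computation rigorous: one needs to justify that $r\mapsto\int Q_r\varphi\,\d\mu_r$ is absolutely continuous with the claimed derivative. The time-derivative of $Q_r\varphi(x)$ exists for all $x$ at a.e.\ $r$ by Proposition~\ref{prop:HL}(ii), and one may write
\[
\int Q_s\varphi\,\d\mu_s-\int Q_t\varphi\,\d\mu_t=\Bigl(\int Q_s\varphi\,\d\mu_s-\int Q_s\varphi\,\d\mu_t\Bigr)+\int(Q_s\varphi-Q_t\varphi)\,\d\mu_t,
\]
handling the first term via the continuity equation applied to the fixed function $Q_s\varphi$ (which is Lipschitz bounded, hence in $L^1\cap\s^2(X)$) together with a Riemann-sum/Fubini partition, and the second via monotone/dominated convergence using the pointwise Hamilton--Jacobi bound. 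The measurability in $(r,x)$ of $\lip(Q_r\varphi)$ from Proposition~\ref{prop:HL}(iii), the bounded compression $\mu_t\le C\mm$, and the Lemma~\ref{le:local} estimate $|L_r(Q_s\varphi)|\le N_r\|Q_s\varphi\|_{\mu_r}$ are precisely what allows the three limiting steps to go through.
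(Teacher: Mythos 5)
Your overall strategy is exactly the paper's: the implication $(i)\Rightarrow(ii)$ via a lifting $\ppi$, the Sobolev-class estimate, and Lemma \ref{le:local} with $G_r=|\dot\mu_r|$ is the same argument as in the paper and gives $N_t\leq|\dot\mu_t|$; the implication $(ii)\Rightarrow(i)$ via Kuwada's duality with the Hopf--Lax semigroup, and the telescoping/Riemann-sum decomposition you sketch at the end, is also the paper's route (the paper carries out the limit passage via Dunford--Pettis for the piecewise-constant-in-time measures and Fatou combined with the upper semicontinuity of $(t,x)\mapsto\lip(Q_t\varphi)(x)$ -- you name these ingredients but do not actually perform the three limits, which is where most of the real work lies).

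There is, however, one concrete misstep in the dual step. For $t<s$ interior times, $\sup_\varphi\bigl(\int Q_s\varphi\,\d\mu_s-\int Q_t\varphi\,\d\mu_t\bigr)$ is \emph{not} $\tfrac12W_2^2(\mu_t,\mu_s)$: by the semigroup property $Q_s\varphi=Q_{s-t}(Q_t\varphi)$, so you are only testing the rescaled duality $\sup_\psi\bigl(\int Q_{s-t}\psi\,\d\mu_s-\int\psi\,\d\mu_t\bigr)=\tfrac{1}{2(s-t)}W_2^2(\mu_t,\mu_s)$, and only over the subclass $\psi=Q_t\varphi$ at that. Consequently the inequality you state, $W_2^2(\mu_t,\mu_s)\leq\int_t^sN_r^2\,\d r$, is both unproved and insufficient: without the factor $|s-t|$ you cannot deduce $|\dot\mu_r|\leq N_r$ (divide by $(s-t)^2$ and let $s\to t$; the right-hand side blows up). The fix is exactly what the correct scaling provides: either rescale the interval $[t,s]$ to $[0,1]$ as the paper does, or run the Hopf--Lax from time $t$, i.e.\ estimate $r\mapsto\int Q_{r-t}\varphi\,\d\mu_r$ for an \emph{arbitrary} Lipschitz bounded $\varphi$; this yields
\[
\frac{1}{2(s-t)}W_2^2(\mu_t,\mu_s)\leq\frac12\int_t^sN_r^2\,\d r,
\qquad\text{i.e.}\qquad
W_2^2(\mu_t,\mu_s)\leq|s-t|\int_t^sN_r^2\,\d r,
\]
which is the paper's bound and does imply 2-absolute continuity and $|\dot\mu_t|\leq N_t$ at Lebesgue points of $N^2$. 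With this bookkeeping corrected, and the limiting steps carried out as in the paper (Dunford--Pettis plus Fatou with the u.s.c.\ of $\lip(Q_r\varphi)$, which is needed precisely because $\lip(Q_r\varphi)^2$ is only Borel and bounded, not continuous), your argument coincides with the paper's proof.
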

\begin{proof}$\ $\\
\noindent{$\mathbf{(i)\Rightarrow (ii)}$} Let $\ppi$ be a lifting of $(\mu_t)$ and notice that $\ppi$ is a test plan. Hence for $f\in L^1\cap\s^2(X)$ we have
\begin{equation}
\label{eq:easy}
\begin{split}
\left|\int f\,\d\mu_s-\int f\,\d\mu_t\right|&\leq \int |f(\gamma_s)-f(\gamma_t)|\,\d\ppi(\gamma)\leq\iint_t^s|Df|(\gamma_r)|\dot\gamma_r|\,\d r\,\d\ppi(\gamma)\\
&\leq \sqrt{\int_t^s\int|Df|^2\,\d\mu_r\,\d r}\sqrt{\int_t^s\int|\dot\gamma_r|^2\,\d \ppi(\gamma)\,\d r}.
\end{split}
\end{equation}
Taking into account that $\int|Df|^2\,\d\mu_r\leq C\int|Df|^2\,\d\mm$ for every $t\in[0,1]$, this shows that $t\mapsto \int f\,\d\mu_t$ is absolutely continuous.

Define $L:\s^2(X)\to L^1(0,1)$ by $L(f)(t):=\partial_t\int f\,\d\mu_t$ and notice that the bound \eqref{eq:easy} gives
\[
\left|\int_t^sL(f)(r)\,\d r\right|\leq  \sqrt{\int_t^s\int|Df|^2\,\d\mu_r\,\d r}\sqrt{\int_t^s G_r^2\,\d r},\qquad\forall t,s\in[0,1],\ t<s,
\]
for $G_t:=|\dot\mu_t|=\sqrt{\int|\dot\gamma_r|^2\,\d \ppi(\gamma)}\in L^2(0,1)$. Hence we can apply Lemma \ref{le:local} and deduce from \eqref{eq:bound} that for every $f\in\s^2(X)$ we have
\[
L_t(f)-\frac12\|f\|_{\mu_t}^2\leq \frac12|\dot\mu_t|^2,\qquad a.e.\ t\in[0,1].
\]
By the definition \eqref{eq:normt}, this latter bound is equivalent to  $N_t\leq|\dot\mu_t|$ for a.e. $t\in[0,1]$.

\noindent{$\mathbf{(ii)\Rightarrow (i)}$} To get the result it is sufficient to prove that
\[
W_2^2(\mu_t,\mu_s)\leq |s-t|\int_t^sN^2_r\,\d r,\qquad\forall t,s\in[0,1],\ t<s.
\]
We shall prove this bound for $t=0$ and $s=1$ only, the general case following by a simple rescaling argument. Recalling that
\[
\frac12W_2^2(\mu_0,\mu_1)=\sup_{\psi}\int \psi\,\d\mu_0+\int \psi^c\,\d\mu_1=\sup_{\varphi}\int Q_1\varphi\,\d\mu_1-\int \varphi\,\d\mu_0,
\]
the sup being taken among all Lipschitz and bounded $\psi,\varphi$, to get the claim it is sufficient to prove that
\begin{equation}
\label{eq:kuwada}
\int Q_1\varphi\,\d\mu_1-\int \varphi\,\d\mu_0\leq \frac12\int_0^1 N_t^2 \,\d t,
\end{equation}
for any Lipschitz and bounded $\varphi:X\to\R$. Fix such $\varphi$ and notice that
\begin{equation}
\label{eq:part0}
\begin{split}
\int Q_{1}\varphi\,\d\mu_{1}-\int \varphi\,\d\mu_{0}=\lim_{n\to\infty}\bigg\{&\sum_{i=0}^{n-1}\int (Q_{\frac{i+1}{n}}\varphi-Q_{\frac in}\varphi)\,\d\mu_{\frac{i+1}{n}}+\int Q_{\frac{i}n}\varphi\,\d(\mu_{\frac{i+1}{n}}-\mu_{\frac in})\bigg\}.
\end{split}
\end{equation}
Recalling point $(ii)$ of Proposition \ref{prop:HL} we have
\[
\begin{split}
\sum_{i=0}^{n-1}\int (Q_{\frac{i+1}{n}}\varphi-Q_{\frac in}\varphi)\,\d\mu_{\frac in}&\leq\sum_{i=0}^{n-1}\int\int_{\frac in}^{\frac{i+1}n}-\frac{\lip(Q_t\varphi)^2}{2}\,\d t\,\d\mu_{\frac in}=\int_{X\times[0,1]}-\frac{\lip(Q_t\varphi)^2(x)}2\,\d\mmu_n(x,t).
\end{split}
\]
where $\mmu_n:=\sum_{i=0}^{n-1}\mu_{\frac in}\times\mathcal L^1\restr{[\frac in,\frac{i+1}n]}$. The continuity of $(\mu_t)$ easily yields that $(\mmu_n)$ converges to $\mmu:=\d\mu_t(x)\otimes \d t$ in duality with $C_b(X\times [0,1])$. Furthermore, the assumption $\mu_t\leq C\mm$ for every $t\in[0,1]$ yields $\mmu_n\leq C\mm\times\mathcal L^1$ for every $n\in\N$ and thus by the Dunfort-Pettis theorem (see for instance Theorem 4.7.20 in \cite{Bogachev07}) we deduce that $(\mmu_n)$ converges to $\mmu\in\prob{X\times[0,1]}$, $\d\mmu:=\d\mu_t\otimes\d t$,  in duality with $L^\infty(X\times[0,1])$. Being $(t,x)\mapsto \frac{\lip(Q_t\varphi)^2(x)}2$ bounded (point $(i)$ of Proposition \ref{prop:HL}), we deduce that
\begin{equation}
\label{eq:part1}
\lims_{n\to\infty}\sum_{i=0}^{n-1}\int (Q_{\frac{i+1}{n}}\varphi-Q_{\frac in}\varphi)\,\d\mu_{\frac in}\leq \iint_0^1-\frac{\lip(Q_t\varphi)^2(x)}2\,\d\mu_t\,\d t.
\end{equation}
On the other hand we have
\[
\begin{split}
\sum_{i=0}^{n-1}\int Q_{\frac{i}n}\varphi\,\d(\mu_{\frac{i+1}n}-\mu_{\frac in})&=\sum_{i=0}^{n-1}\int_{\frac in}^{\frac{i+1}n}L_s(Q_{\frac{i}n}\varphi)\,\d s\\
&\leq \sum_{i=0}^{n-1}\frac12\int_{\frac in}^{\frac{i+1}n}N_s^2\,\d s+\sum_{i=0}^{n-1}\int_{\frac in}^{\frac{i+1}n}\int\frac{|DQ_{\frac{i}n} \varphi |^2}2\,\d\mu_s\,\d s\\
&\leq \frac12\int_0^1N_t^2\,\d t+\int_{X\times[0,1]} f_n(t,x)\,\d\mmu,
\end{split}
\]
where $f_n(t,x):=\frac{\lip(Q_{\frac{i}n}\varphi)^2(x)}{2}$ for $t\in[\frac in,\frac{i+1}n)$ and $\d\mmu(t,x):=\d\mu_t(x)\otimes \d t$. Recall that by points $(i),(iii)$ of Proposition \ref{prop:HL}  we have that the $f_n$'s are equibounded and satisfy $\lims_n f_n(t,x)\leq f(t,x):=\frac{\lip(Q_{t}\varphi)^2(x)}{2}$, thus Fatou's lemma gives
\[
\lims_{n\to\infty}\int_{X\times[0,1]} f_n(t,x)\,\d\mmu\leq \int_{X\times[0,1]} f(t,x)\,\d\mmu,
\]
and therefore
\begin{equation}
\label{eq:part2}
\lims_{n\to\infty}\sum_{i=0}^{n-1}\int Q_{\frac{i+1}n}\varphi\,\d(\mu_{\frac{i+1}n}-\mu_{\frac in})\leq \frac12\int_0^1N_t^2\,\d t+\iint_0^1\frac{\lip(Q_t\varphi)^2}{2}\,\d \mu_t\,\d t
\end{equation}
The bounds \eqref{eq:part1} and \eqref{eq:part2} together with \eqref{eq:part0} give  \eqref{eq:kuwada} and the thesis.
\end{proof}

If we know that $\s^2(X)$ is separable, the result is slightly stronger, as a better description of the operators $\{L_t\}$ is possible, as shown by the following statement.
\begin{proposition}\label{cor:separ}
Let $(\mu_t)\subset \probt X$ be a 2-absolutely continuous curve w.r.t. $W_2$ of bounded compression. Assume furthermore that $\s^2(X)$ is separable. Then there exists a $\mathcal L^1$-negligible set $\mathcal N\subset[0,1]$ and, for every $t\in[0,1]\setminus \mathcal N$, a linear map $L_t:\s^2(X)\to\R$ such that:
\begin{itemize}
\item[i)] every $t\in[0,1]\setminus\mathcal N$ is a Lebesgue point of $s\mapsto|\dot\mu_s|^2$, the metric speed  $|\dot\mu_s|$ exists at $s=t$ and we have $|\dot\mu_t|=\|L_t\|^*_{\mu_t}$,\\
\item[ii)] for every $f\in L^1\cap\s^2(X)$ the map $t\mapsto\int f\,\d\mu_t$ is absolutely continuous, differentiable at every $t\in[0,1]\setminus\mathcal L$ and its derivative is given by
\[
\frac{\d}{\d t}\int f\,\d\mu_t=L_t(f),\qquad\forall t\in[0,1]\setminus\mathcal N.
\]
\end{itemize}
\end{proposition}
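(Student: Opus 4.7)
The plan is to use separability of $\s^2(X)$ to collapse the $f$-dependent exceptional sets of Theorem \ref{thm:main} into a single null set $\mathcal{N}\subset[0,1]$ that works uniformly for all $f\in L^1\cap\s^2(X)$. I would first apply Theorem \ref{thm:main} to obtain a family $\{L^0_t\}_{t\in[0,1]}$ of linear maps $\s^2(X)\to\R$ solving the continuity equation \eqref{eq:basecont}, with $N_t=|\dot\mu_t|$ for a.e.\ $t$. Using separability (together with the standard truncation/cut-off arguments as in Proposition \ref{prop:separ}), I would fix a countable $\{f_n\}_{n\in\N}\subset L^1(\mm)\cap\s^2(X)$ that is dense in the $\s^2$-seminorm; since $\mu_t\leq C\mm$ yields $\|\cdot\|_{\mu_t}\leq C^{1/2}\|\cdot\|_{\s^2}$, this family is automatically dense in ${\rm CoTan}_{\mu_t}(X)$ for every $t$. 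I would then let $\mathcal{N}$ be the countable union, over $n\in\N$, of the $\mathcal L^1$-negligible sets where any of the following fails: the metric speed exists at $t$ and equals $N_t$; $t$ is a Lebesgue point of $s\mapsto|\dot\mu_s|^2$; the derivative $\partial_t\int f_n\,\d\mu_t$ exists and equals $L^0_t(f_n)$; the inequalities $|L^0_t(f_n)|\leq|\dot\mu_t|\|f_n\|_{\mu_t}$ (from Lemma \ref{le:local}) and $2L^0_t(f_n)-\|f_n\|_{\mu_t}^2\leq N_t^2$ (from the definition of $N_t$) both hold.

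For $t\notin\mathcal{N}$ I would construct $L_t$ by declaring $L_t(f_n):=L^0_t(f_n)$ for each $n$; the bound $|L_t(f_n)|\leq|\dot\mu_t|\|f_n\|_{\mu_t}$ and linearity make $L_t$ a bounded functional on the image of the span of $\{f_n\}$ in ${\rm CoTan}_{\mu_t}(X)$, of norm at most $|\dot\mu_t|$, so density extends it uniquely by continuity to all of ${\rm CoTan}_{\mu_t}(X)$, and composition with the canonical projection $\s^2(X)\to{\rm CoTan}_{\mu_t}(X)$ produces the required $L_t\colon\s^2(X)\to\R$. This already gives $\|L_t\|^*_{\mu_t}\leq|\dot\mu_t|$, while the reverse inequality follows from
\[
\tfrac12|\dot\mu_t|^2=\tfrac12N_t^2\geq\sup_{n\in\N}\Bigl(L_t(f_n)-\tfrac12\|f_n\|^2_{\mu_t}\Bigr)=\sup_{f\in\s^2(X)}\Bigl(L_t(f)-\tfrac12\|f\|^2_{\mu_t}\Bigr)=\tfrac12\bigl(\|L_t\|^*_{\mu_t}\bigr)^2,
\]
the central inequality coming from the $N_t$-bound at each $f_n$ at $t\notin\mathcal{N}$, the first equality from density of $\{f_n\}$ in ${\rm CoTan}_{\mu_t}(X)$ together with continuity of $L_t$ and $\|\cdot\|_{\mu_t}$. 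This proves (i).

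For (ii), absolute continuity of $t\mapsto\int f\,\d\mu_t$ for $f\in L^1\cap\s^2(X)$ is already contained in the Cauchy--Schwarz estimate \eqref{eq:easy}. To upgrade a.e.\ differentiability to differentiability at every $t\notin\mathcal{N}$, I would decompose
\[
\tfrac1h\Bigl(\!\int\! f\,\d\mu_{t+h}-\!\int\! f\,\d\mu_t\Bigr)-L_t(f)=A_n(h)+E^{(1)}_n(h)+E^{(2)}_n,
\]
with $A_n(h):=\tfrac1h\bigl(\!\int\! f_n\,\d\mu_{t+h}-\!\int\! f_n\,\d\mu_t\bigr)-L_t(f_n)$, $E^{(1)}_n(h):=\tfrac1h\int(f-f_n)\,\d(\mu_{t+h}-\mu_t)$ and $E^{(2)}_n:=L_t(f_n-f)$. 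The term $A_n(h)$ vanishes as $h\to 0$ because $f_n\in L^1\cap\s^2(X)$ and, by construction of $\mathcal{N}$, $t$ is a differentiability point of $s\mapsto\int f_n\,\d\mu_s$ with derivative $L^0_t(f_n)=L_t(f_n)$. Applying \eqref{eq:easy} to $g:=f-f_n$ and using $\|g\|_{\mu_r}\leq C^{1/2}\|g\|_{\s^2}$ gives $|E^{(1)}_n(h)|\leq C^{1/2}\|f-f_n\|_{\s^2}(h^{-1}\!\int_t^{t+h}|\dot\mu_r|^2\,\d r)^{1/2}$, which converges to $C^{1/2}|\dot\mu_t|\|f-f_n\|_{\s^2}$ as $h\to 0$ by the Lebesgue-point property; meanwhile $|E^{(2)}_n|\leq\|L_t\|^*_{\mu_t}\|f-f_n\|_{\mu_t}\leq C^{1/2}|\dot\mu_t|\|f-f_n\|_{\s^2}$ thanks to (i). Taking first $h\to 0$ and then $n\to\infty$ kills all errors, yielding the derivative at $t$ equal to $L_t(f)$. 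The main obstacle is precisely this last step—ensuring that a single null set $\mathcal{N}$ chosen from the countable family $\{f_n\}$ serves every $f$ in the uncountable $L^1\cap\s^2(X)$—and it succeeds only because \eqref{eq:easy} controls increments $\int g\,\d\mu_s-\int g\,\d\mu_t$ uniformly in $g$ by the $\s^2$-seminorm, so that $\s^2$-dense approximation transfers pointwise differentiability and not merely its a.e.\ version.
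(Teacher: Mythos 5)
Your argument is correct and follows essentially the same route as the paper: a countable $\s^2$-dense family, a single null set collecting all exceptional points, extension of $f_n\mapsto\partial_t\int f_n\,\d\mu_t$ by density using the uniform Cauchy--Schwarz increment bound \eqref{eq:easy}, and the identity $\|L_t\|^*_{\mu_t}=|\dot\mu_t|$ via $N_t=|\dot\mu_t|$ from Theorem \ref{thm:main}. The only cosmetic point is that boundedness of $L_t$ on the span of $\{f_n\}$ should be justified by applying \eqref{eq:easy} directly to (rational) linear combinations rather than to each $f_n$ separately, which is exactly what the paper's bound $|\partial_t\int f\,\d\mu_t|\leq|\dot\mu_t|\|f\|_{\mu_t}$ does.
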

\begin{proof}
Let $\{f_n\}_{n\in\N}\subset \s^2(X)$ be a countable dense set and $\mathcal N\subset[0,1]$ the set of $t$'s such that either the metric speed $|\dot\mu_t|$ does not exist, or $t$ is not a Lebesgue point of $s\mapsto|\dot\mu_s|^2$ or for some $n\in\N$ the map $s\mapsto\int f\,\d\mu_s$ is not differentiable at $t$. Then by Theorem \ref{thm:main} we know that $\mathcal N$ is negligible.

For $n\in\N$ and $t\in[0,1]\setminus\mathcal N$, inequality \eqref{eq:easy} gives, after a division for $|s-t|$ and a limit $s\to t$, the bound
\[
\left|\frac{\d}{\d t}\int f_n\,\d\mu_t\right|\leq |\dot\mu_t|\sqrt{\int |Df_n|^2\,\d\mu_t}\leq C |\dot\mu_t|\sqrt{\int |Df_n|^2\,\mm}.
\]
This means that the map $\s^2(X)\ni f_n\mapsto \frac{\d}{\d t}\int f_n\,\d\mu_t$ can be uniquely extended to a linear operator $L_t$ from $\s^2(X)$ to $\R$ which satisfies $\|L_t\|^*_{\mu_t}\leq |\dot\mu_t|$.

For $f\in L^1\cap\s^2(X)$ denote by $I_f:[0,1]\to\R$ the function given by $I_f(t):=\int f\,\d(\mu_t-\mu_0)$. Then the map $f\mapsto I_f$ is clearly linear and satisfies
\[
\begin{split}
|I_f(t)|\leq\int |f(\gamma_t)-f(\gamma_0)|\,\d\ppi(\gamma)&\leq \iint_0^t|Df|(\gamma_s)|\dot\gamma_s|\,\d s\,\d\ppi(\gamma)\leq\sqrt C\|f\|_{\s^2}\sqrt{\iint_0^1|\dot\gamma_t|^2\,\d\ppi(\gamma)}.
\end{split}
\]
Hence given that we have $I_{f_n}(t)=\int_0^tL_t(f_n)\,\d t$ for every $n\in\N$ and that $\{f_n\}$ is dense in $L^1\cap\s^2(X)$ w.r.t. the (semi)distance of $\s^2(X)$, from the bound $N_t\leq |\dot\mu_t|$ for $\mathcal L^1$-a.e. $t$, we deduce that $I_f(t)=\int_0^t L_t(f)\,\d t$ for every $f\in L^1\cap \s^2(X)$ and every $t\in[0,1]$.

Along the same lines we have that
\[
\begin{split}
|I_f(s)-I_f(t)|&\leq \int |f(\gamma_s)-f(\gamma_t)|\,\d\ppi(\gamma)\leq\iint_t^s |Df|(\gamma_r)|\dot\gamma_r|\,\d r\,\d\ppi(\gamma)\leq\\
&\leq \sqrt{C|s-t|\iint_t^s|\dot\gamma_s|^2\,\d r\,\d\ppi(\gamma)}\|f\|_{\s^2}= \sqrt{C|s-t|\int_t^s|\dot\mu_r|^2\,\d r}\|f\|_{\s^2}
\end{split}
\]
and therefore for every $t\in[0,1]$ Lebesgue point of $s\mapsto|\dot\mu_s|^2$ and such that $|\dot\mu_t|$ exists we have
\[
\lims_{s\to t}\left|\frac{I_f(s)-I_f(t)}{s-t}\right|\leq \sqrt C\|f\|_{\s^2}|\dot\mu_t|.
\]
Taking into account that, by construction, we have $\lim_{s\to t}\frac{I_{f_n}(s)-I_{f_n}(t)}{s-t}=L_t(f_n)$ for every $t\in[0,1]\setminus\mathcal N$ and the density of $\{f_n\}$, we deduce that $\lim_{s\to t}\frac{I_{f}(s)-I_{f}(t)}{s-t}=L_t(f)$ for every $f\in L^1\cap \s^2(X)$ and  $t\in[0,1]\setminus\mathcal N$ .

It remains to prove that $\|L_t\|^*_{\mu_t}=|\dot\mu_t|$ for $t\in[0,1]\setminus\mathcal N$. From Theorem \ref{thm:main} we know that $N_t=|\dot\mu_t|$ for $\mathcal L^1$-a.e. $t$ and to conclude use the separability of $\s^2(X)$ to get
\[
\frac12(\|L_t\|^*_{\mu_t})^2=\sup_{n\in\N}L_t(f_n)-\frac12\|f_n\|^2_{\mu_t}=\esssup_{f\in\s^2(X)}L_t(f)-\frac12\|f\|^2_{\mu_t}=\frac12 N_t^2,\qquad a.e.\ t,
\]
so that up to enlarging $\mathcal N$ we get the thesis.
\end{proof}

\subsection{Some consequences in terms of differential calculus}
As discussed in \cite{Otto01}, see also \cite{AmbrosioGigliSavare08}, the continuity equation plays a key role in developing a first order calculus on the space $(\probt{\R^d},W_2)$. In this section, we show that the continuity equation plays a similar role on metric measure spaces, where no smooth structure is a priori given. The only technical difference one needs to pay attention to is the fact that only curves with bounded compression should be taken into account.

\bigskip

We start with the Benamou-Brenier formula. Recall that on $\R^d$, and more generally on Riemannian/Finslerian manifolds, we have the identity
\begin{equation}
\label{eq:BB}
W_2^2(\mu_0,\mu_1)=\inf\int_0^1\int |v_t|^2\,\d\mu_t\,\d t,
\end{equation}
where the $\inf$ is taken among all 2-a.c. curves $(\mu_t)$ joining $\mu_0 $ to $\mu_1$ and the $v_t$'s are such that the continuity equation
\begin{equation}
\label{eq:contsmooth}
\partial_t\mu_t+\nabla\cdot(v_t\mu_t)=0,
\end{equation}
holds. We want to investigate the validity of this formula in the metric-measure context. To this aim, notice that formula \eqref{eq:BB} expresses the fact that the distance $W_2$ can be realized as $\inf$ of length of curves, where this length is measured in an appropriate way. Hence there is little hope to get an analogous of this formula on $(X,\sfd,\mm)$ unless we require in advance that $(X,\sfd)$ is a length space. Furthermore, given that in the non-smooth case we are confined to work with curves with bounded compression, we need to enforce a length structure compatible with the measure $\mm$, thus we are led to the following definition:
\begin{definition}[Measured-length spaces]
We say that $(X,\sfd,\mm)$ is measured-length provided for any $\mu_0,\mu_1\in\probt X$ with bounded support and satisfying $\mu_0,\mu_1\leq C\mm$ for some $C>0$ the distance $W_2(\mu_0,\mu_1)$ can be realized as $\inf$ of length of absolutely continuous curves $(\mu_t)$ with bounded compression connecting $\mu_0$ to $\mu_1$.
\end{definition}
On measured-length spaces we then have a natural analog of formula \eqref{eq:BB}, which is in fact a direct consequence of Theorem \ref{thm:main}:
\begin{proposition}[Benamou-Brenier formula on metric measure spaces]
Let $(X,\sfd,\mm)$ be a measured-length space and $\mu_0,\mu_1\in\probt X$ with bounded support and satisfying $\mu_0,\mu_1\leq C\mm$ for some $C>0$.

Then we have
\[
W_2^2(\mu_0,\mu_1)=\inf\int_0^1(\|L_t\|^*_{\mu_t})^2\,\d t,
\]
the $\inf$ being taken among all 2-absolutely continuous curves $(\mu_t)$ with bounded compression joining $\mu_0$ to $\mu_1$ and the operators $(L_t)$ are those associated to the curve via Theorem \ref{thm:main}.
\end{proposition}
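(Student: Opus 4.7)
The plan is to establish the two inequalities $W_2^2(\mu_0,\mu_1)\leq \inf$ and $\inf \leq W_2^2(\mu_0,\mu_1)$ separately, with Theorem \ref{thm:main} as the main bridge identifying the operator dual norm $\|L_t\|^*_{\mu_t}$ with the metric speed $|\dot\mu_t|$, and with the measured-length hypothesis supplying the near-optimal competitors needed for the second inequality.

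For the first inequality, I would take an arbitrary 2-absolutely continuous curve $(\mu_t)$ of bounded compression with associated operators $\{L_t\}$ and first observe that $N_t\leq \|L_t\|^*_{\mu_t}$ for a.e.\ $t$. Indeed, the bound $\frac12(\|L_t\|^*_{\mu_t})^2\geq L_t(f)-\frac12\|f\|_{\mu_t}^2$ holds pointwise in $t$ for every $f\in\s^2(X)$, so by the minimality in the definition \eqref{eq:normt} the function $\frac12(\|L_t\|^*_{\mu_t})^2$ majorizes $\frac12 N_t^2$ a.e. Coupled with the identity $N_t=|\dot\mu_t|$ from Theorem \ref{thm:main}, this gives
\[
\int_0^1 (\|L_t\|^*_{\mu_t})^2\,\d t \geq \int_0^1|\dot\mu_t|^2\,\d t \geq \left(\int_0^1|\dot\mu_t|\,\d t\right)^2\geq W_2^2(\mu_0,\mu_1),
\]
where the middle step is Cauchy--Schwarz and the last one is the usual bound of $W_2$ by the length of any absolutely continuous curve from $\mu_0$ to $\mu_1$.

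For the reverse inequality, I would fix $\varepsilon>0$ and invoke the measured-length hypothesis to obtain an absolutely continuous curve $(\tilde\mu_t)$ of bounded compression joining $\mu_0$ to $\mu_1$ whose length is at most $W_2(\mu_0,\mu_1)+\varepsilon$. After reparametrization by constant metric speed, which preserves bounded compression, one has $\int_0^1|\dot{\tilde\mu}_t|^2\,\d t\leq (W_2(\mu_0,\mu_1)+\varepsilon)^2$. The operators $\tilde L_t$ constructed in the $(i)\Rightarrow(ii)$ direction of Theorem \ref{thm:main} via the Hahn--Banach extension of Lemma \ref{le:local} satisfy the pointwise bound $|\tilde L_t(f)|\leq \sqrt{\overline{G^2_t}}\,\|f\|_{\tilde\mu_t}$ with $G_t=|\dot{\tilde\mu}_t|$, which translates into $\|\tilde L_t\|^*_{\tilde\mu_t}\leq |\dot{\tilde\mu}_t|$ a.e. Combined with the opposite inequality obtained in the previous step, this upgrades to $\|\tilde L_t\|^*_{\tilde\mu_t}=|\dot{\tilde\mu}_t|$ a.e., and hence
\[
\int_0^1(\|\tilde L_t\|^*_{\tilde\mu_t})^2\,\d t = \int_0^1|\dot{\tilde\mu}_t|^2\,\d t \leq (W_2(\mu_0,\mu_1)+\varepsilon)^2.
\]
Letting $\varepsilon\downarrow 0$ concludes the argument.

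The main point requiring care is the equality $\|L_t\|^*_{\mu_t}=|\dot\mu_t|$ for the canonical operators, which needs both the sharp Hahn--Banach bound from Lemma \ref{le:local} and the essential-supremum inequality $N_t\leq \|L_t\|^*_{\mu_t}$; since $\s^2(X)$ is not assumed separable, one cannot a priori realize $N_t$ as a countable pointwise supremum, so this two-sided comparison is genuinely needed. Beyond that, the measured-length hypothesis plays the indispensable role of guaranteeing the existence of near-optimal competitors in the correct class, without which the infimum on the right could strictly exceed $W_2^2(\mu_0,\mu_1)$.
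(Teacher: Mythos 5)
Your argument is correct and follows essentially the same route as the paper: the identification $\|L_t\|^*_{\mu_t}=|\dot\mu_t|$ coming from Theorem \ref{thm:main} (via the Hahn--Banach bound of Lemma \ref{le:local} on one side and the essential-supremum minimality in \eqref{eq:normt} on the other), combined with the definition of measured-length space. The paper compresses the Cauchy--Schwarz/constant-speed reparametrization step and the two-sided norm comparison into the phrase ``follows directly from the definition''; you have simply written out those details, correctly.
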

\begin{proof}
By Theorem \ref{thm:main} we know that for a 2-absolutely continuous curve $(\mu_t)$ with bounded compression we have $|\dot\mu_t|=\|L_t\|_{\mu_t}^*$ for a.e. $t\in[0,1]$, the operators $\{L_t\}$ being those associated to the curve via Theorem \ref{thm:main} itself. The conclusion then follows directly from the definition of measured-length space.
\end{proof}
We now discuss the formula for the derivative of $t\mapsto\frac12W_2^2(\mu_t,\nu)$, where $(\mu_t)$ is a 2-a.c. curve with bounded compression. Recall that on the Euclidean setting we have
\[
\frac\d{\d t}\frac12W_2^2(\mu_t,\nu)=\int \nabla\varphi_t\cdot v_t\,\d\mu_t,\qquad a.e.\ t,
\]
where $\varphi_t$ is a Kantorovich potential from $\mu_t$ to $\nu$ for every $t\in[0,1]$ and the vector fields $(v_t)$ are such that the continuity equation \eqref{eq:contsmooth} holds. Due to our interpretation of the continuity equation in the metric measure setting, we are therefore lead to guess that in the metric-measure setting we have
\begin{equation}
\label{eq:derw}
\frac\d{\d t}\frac12W_2^2(\mu_t,\nu)=L_t(\varphi_t),\qquad a.e.\ t.
\end{equation}
As we shall see in a moment (Proposition \ref{prop:derw2}) this is actually the case in quite high generality, but before coming to the proof, we need to spend few words on how to interpret the right hand side of \eqref{eq:derw} because in general we don't have $\varphi_t\in\s^2(X)$ so that a priory $\varphi_t$ is outside the domain of definition of $L_t$. This can in fact be easily fixed by considering  $\varphi_t$  as element of ${\rm CoTan}_\mu(X)$, as defined in Section \ref{se:def}. This is the scope of the following lemma.
\begin{lemma}\label{le:1}
Let $(X,\sfd,\mm)$ be a m.m.s. such that $\mm$ gives finite mass to bounded sets and  $\varphi$  a $c$-concave function such that $\partial^c\varphi(x)\cap B\neq\emptyset$ for every $x\in X$ and some bounded set $B\subset X$.  Define  $\varphi_n:=\min\{n,\varphi\}$.

Then  $\varphi_n\in \s^2(X)$ and $(\varphi_n)$ is a Cauchy sequence w.r.t. the seminorm $\|\cdot\|_\mu$ for every $\mu\in\probt X$ such that $\mu\leq C\mm$ for some $C>0$.
 \end{lemma}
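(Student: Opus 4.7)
The plan is to derive two geometric estimates on $\varphi$ from the assumption that $\partial^c\varphi(x)\cap B\neq\emptyset$ for every $x$: a local Lipschitz bound and a quadratic growth lower bound. These combined with the standard inequality $|Df|\le\lip(f)$ and the finiteness of $\mm$ on bounded sets will immediately give $\varphi_n\in\s^2(X)$, and a dominated-convergence argument will then produce the Cauchy property.

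\emph{Step 1: Lipschitz estimate.} Fix $x_0\in X$ and $R>0$ with $B\subset B_R(x_0)$. Pick any $y\in\partial^c\varphi(x)\cap B$. The defining inequality of $\partial^c\varphi$ reads
\[
\varphi(z)-\varphi(x)\leq \tfrac12\sfd^2(z,y)-\tfrac12\sfd^2(x,y),\qquad\forall z\in X,
\]
whose right-hand side I bound, via $\sfd(z,y)\le \sfd(x,y)+\sfd(x,z)$, by $\sfd(x,y)\sfd(x,z)+\tfrac12\sfd(x,z)^2$. Dividing by $\sfd(x,z)$ and letting $z\to x$ yields
\[
\lip(\varphi)(x)\leq \sfd(x,y)\leq \sfd(x,x_0)+R.
\]
Taking $z=x_0$ and observing $\sfd(x,y)\geq (\sfd(x,x_0)-R)_+$ gives the quadratic lower bound
\[
\varphi(x)\geq \tfrac12(\sfd(x,x_0)-R)_+^2-\tfrac{R^2}{2}+\varphi(x_0),
\]
so in particular $\{\varphi\le n\}$ is contained in some ball $B_{r_n}(x_0)$ for every $n\in\N$.

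\emph{Step 2: $\varphi_n\in\s^2(X)$.} Since $t\mapsto\min\{n,t\}$ is $1$-Lipschitz, $\varphi_n$ is locally Lipschitz with $\lip(\varphi_n)\leq\lip(\varphi)$. Moreover the open set $\{\varphi>n\}$ is contained in the interior of $\{\varphi_n=n\}$, hence $\lip(\varphi_n)=0$ there. Combined with Step 1 this yields
\[
\lip(\varphi_n)\leq (\sfd(\cdot,x_0)+R)\,\mathbf 1_{\overline{B_{r_n}(x_0)}},
\]
which is bounded and compactly supported, thus in $L^2(X,\mm)$ since $\mm$ gives finite mass to bounded sets. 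By \eqref{eq:lipweak} we conclude $|D\varphi_n|\in L^2(X,\mm)$, i.e.\ $\varphi_n\in\s^2(X)$.

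\emph{Step 3: Cauchy in $\|\cdot\|_\mu$.} Fix $\mu\leq C\mm$ in $\probt X$ and let $m<n$. Since $\varphi_n-\varphi_m$ vanishes identically on $\{\varphi\leq m\}$ and is constantly $n-m$ on the open set $\{\varphi>n\}$, the locality properties \eqref{eq:nullgrad}–\eqref{eq:localgrad} give
\[
|D(\varphi_n-\varphi_m)|=0 \quad\mm\text{-a.e.\ on } \{\varphi\leq m\}\cup\{\varphi>n\},
\]
and hence $\mu$-a.e.\ on the same set since $\mu\leq C\mm$. On the complement $\{m<\varphi\leq n\}$ the triangle inequality \eqref{eq:vectorstru} combined with $|D\varphi_i|\le\lip(\varphi_i)\le\lip(\varphi)$ yields $|D(\varphi_n-\varphi_m)|\leq 2(\sfd(\cdot,x_0)+R)$. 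Therefore
\[
\int|D(\varphi_n-\varphi_m)|^2\,\d\mu\leq 4\int_{\{\varphi>m\}}(\sfd(\cdot,x_0)+R)^2\,\d\mu.
\]
The function $(\sfd(\cdot,x_0)+R)^2$ is $\mu$-integrable since $\mu\in\probt X$, and $\mathbf 1_{\{\varphi>m\}}\to 0$ pointwise as $m\to\infty$ because $\varphi$ is finite everywhere. Dominated convergence closes the proof.

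\emph{Expected obstacle.} The only subtle point is verifying that $|D(\varphi_n-\varphi_m)|$ vanishes (and not merely is bounded) outside $\{m<\varphi\le n\}$; without this one would be left with a constant contribution on the complement of a bounded set, which is not $\mu$-controllable. This is handled cleanly by invoking the locality of the minimal weak upper gradient.
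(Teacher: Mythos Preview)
Your proof is correct and follows essentially the same route as the paper's: a local Lipschitz bound $\lip(\varphi)(x)\lesssim \sfd(x,x_0)+\text{const}$, a quadratic lower bound giving bounded sublevels, membership $\varphi_n\in\s^2(X)$ via $\mm$ finite on bounded sets, and then dominated convergence using $\lip(\varphi)^2\in L^1(\mu)$ together with locality of $|D\cdot|$ to kill the contribution outside $\{m<\varphi\le n\}$. The only cosmetic difference is that the paper obtains the lower bound through $\sup_B\varphi^c<\infty$ and writes the Cauchy estimate as an equality $\|\varphi_n-\varphi_m\|_\mu^2=\int_{\{\varphi_n\neq\varphi_m\}}|D\varphi|^2\,\d\mu$ (informally using $|D\varphi|$), whereas you take $z=x_0$ in the $c$-superdifferential inequality and use the cruder bound $|D(\varphi_n-\varphi_m)|\le 2(\sfd(\cdot,x_0)+R)$; both lead to the same dominated-convergence conclusion.
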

 \begin{proof}
 We first claim that $\sup_B\varphi^c<\infty$. Indeed, if not there is a sequence $(y_n)\subset B$ such that $\varphi^c(y_n)>n$ for every $n\in\N$. Hence for every $x\in X$ we would have
 \[
 \varphi(x)\leq \inf_{n\in\N}\sfd^2(x,y_n)-\varphi^c(y_n)\leq\inf_{n\in\N}\frac12\big(\sfd(x,B)+{\rm diam}(B)\big)^2-n=-\infty,
 \]
 contradicting the definition of $c$-concavity. Using the assumption we have
 \begin{equation}
\label{eq:boundvarphi}
 \varphi(x)=\inf_{y\in B}\frac{\sfd^2(x,y)}2-\varphi^c(y)\geq \frac{\sfd^2(x,B)}2-\sup_B\varphi^c.
\end{equation}
This proves that $\varphi$ is bounded from below and that it has bounded  sublevels. Hence the truncated functions $\varphi_n$ are constant outside a bounded set. Now let $x,x'\in X$ and $y\in\partial^c\varphi(x)\cap B$. Then we have
 \[
 \varphi(x)-\varphi(x')\leq\frac{\sfd^2(x,y)-\sfd^2(x'y)}2\leq\sfd(x,x')\left(\frac{\sfd(x,B)+\sfd(x',B)}2+{\rm diam}(B)\right).
 \]
 Inverting the roles of $x,x'$ we deduce that $\varphi$ is Lipschitz on bounded sets and the pointwise estimate
 \begin{equation}
\label{eq:estlip}
\lip(\varphi)(x)\leq\sfd(x,B)+{\rm diam}(B),\qquad\forall x\in X.
\end{equation}
It follows that the $\varphi_n$'s are Lipschitz and, using the fact that $\mm$ gives finite mass to bounded sets, that $\varphi_n\in\s^2(X)$ for every $n\in\N$.

To conclude, notice that the bound \eqref{eq:estlip} ensures that for any $\mu\in\probt X$ we have $\lip(\varphi)\in L^2(\mu)$. Thus for $\mu$ such that $\mu\leq C\mm$ for some $C>0$ we have $|D\varphi|\leq \lip(\varphi)$ $\mu$-a.e. and thus $|D\varphi|\in L^2(\mu)$ as well. Now observe  that
\[
\begin{split}
\|\varphi_m-\varphi_n\|_\mu^2=\int_{\{\varphi_m\neq\varphi_n\}}|D\varphi|^2\,\d\mu,
\end{split}
\]
and that the right hand side goes to 0 as $n,m\to\infty$, because by \eqref{eq:boundvarphi} we know that  $\cup_n\{\varphi=\varphi_n\}=X$.
 \end{proof}
Thanks to this lemma we can, and will, associate { to} the Kantorovich potential $\varphi$ an element of ${\rm CoTan}_\mu(X)$: it is the limit of the equivalence classes of the truncated functions $\varphi_n$.

Recall that for $\mu,\nu\in\probt X$, there always exists a Kantorovich potential $\varphi$ from $\mu$ to $\nu$ such that
\begin{equation}
\label{eq:4}
\varphi(x)=\inf_{y\in\supp(\nu)}\frac{\sfd^2(x,y)}2-\varphi^c(y),\qquad\forall x\in X,
\end{equation}
hence if $\nu$ has bounded support, a potential satisfying the assumption of Lemma \ref{le:1} above can always be found.

We can now state and prove the following result about the derivative of $W_2^2(\cdot,\nu)$. It is worth noticing that formula \eqref{eq:derw2} below holds even for spaces which are not length spaces.
\begin{proposition}[Derivative of ${W}^2_2(\cdot, \nu)$]\label{prop:derw2}
Let $(\mu_t)\subset\probt X$ be a 2-a.c. curve with bounded compression, $\nu\in\probt X$ with bounded support and notice that $t\mapsto\frac12 W_2^2(\mu_t,\nu)$ is absolutely continuous. Assume that $\s^2(X)$ is separable and that $\mm$ gives finite mass to bounded sets. Then for a.e. $t\in[0,1]$ the formula
\begin{equation}
\label{eq:derw2}
\frac{\d}{\d t}\frac12 W_2^2(\mu_t,\nu)=L_t(\varphi_t),
\end{equation}
holds, where $\varphi_t$ is any Kantorovich potential from $\mu_t$ to $\nu$ fulfilling the assumptions of Lemma \ref{le:1}.
\end{proposition}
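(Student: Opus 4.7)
The plan is to reduce the identity to a Lebesgue-point statement and then exploit the separability of $\s^2(X)$ to handle the $t$-dependence of the Kantorovich potential $\varphi_t$. Set $g(t):=\tfrac12 W_2^2(\mu_t,\nu)$; its absolute continuity follows from $|g(s)-g(t)|\leq 2\bigl(\sup_{r\in[0,1]}W_2(\mu_r,\nu)\bigr)\int_t^s|\dot\mu_r|\,dr$, while the duality formula yields the sub-differential-type inequality $\int\varphi_t\,d(\mu_s-\mu_t)\leq g(s)-g(t)$ for every $s\in[0,1]$, with equality at $s=t$. Dividing by $h=s-t$ and letting $h\to 0^\pm$ at any point $t$ where $g'(t)$ exists reduces the proposition to the claim
\[
\lim_{h\to 0}\tfrac{1}{h}\int\varphi_t\,d(\mu_{t+h}-\mu_t)=L_t(\varphi_t)\qquad\text{for a.e. }t\in[0,1],
\]
with $L_t$ extended to the bounded functional on ${\rm CoTan}_{\mu_t}(X)$ produced by Theorem \ref{thm:main} (which gives $\|L_t\|^*_{\mu_t}=|\dot\mu_t|<\infty$ a.e.).

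I would then apply Proposition \ref{cor:separ} to the truncations $\varphi_n:=\min\{n,\varphi_t\}$ from Lemma \ref{le:1}. The proof of that lemma shows $\varphi_n$ is Lipschitz and constant (equal to $n$) outside a bounded set, hence $\varphi_n-n\in L^1\cap\s^2(X)$; since constants are annihilated by $L_r$, this yields $\int\varphi_n\,d(\mu_{t+h}-\mu_t)=\int_t^{t+h}L_r(\varphi_n)\,dr$. To pass to the limit $n\to\infty$, monotone convergence handles the left side (as $\varphi_t$ is bounded below), while for the right the bound $|L_r(\varphi_n-\varphi_m)|\leq|\dot\mu_r|\|\varphi_n-\varphi_m\|_{\mu_r}$ reduces $L^2(0,1)$-Cauchyness to the uniform tail estimate
\[
\sup_{r\in[0,1]}\|\varphi_t-\varphi_n\|_{\mu_r}^2=\sup_r\int|D\varphi_t|^2\mathbf 1_{\{\varphi_t>n\}}\,d\mu_r\xrightarrow{n\to\infty}0.
\]
This follows by combining $\lip(\varphi_t)^2\leq 2\sfd^2(\cdot,y_0)+\text{const}$ with the inclusion $\{\varphi_t>n\}\subset\{\sfd(\cdot,y_0)\geq R_n\}$, $R_n\to\infty$ (both consequences of $c$-concavity of $\varphi_t$ with $y_0\in\supp\nu$ fixed), together with the uniform integrability of $\sfd^2(\cdot,y_0)$ against $\{\mu_r\}_{r\in[0,1]}$, itself a consequence of the $W_2$-compactness of this family (being the continuous image of $[0,1]$). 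The outcome is $\int\varphi_t\,d(\mu_{t+h}-\mu_t)=\int_t^{t+h}L_r(\varphi_t)\,dr$, with $L_r(\varphi_t)$ the extension evaluated on the element of ${\rm CoTan}_{\mu_r}(X)$ represented by $\varphi_t$.

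It remains to show that $t$ is a Lebesgue point of $r\mapsto L_r(\varphi_t)$ for a.e. $t$. Here I would fix a countable dense family $\{f_k\}\subset\s^2(X)$ (possible by separability) and let $\mathcal N\subset[0,1]$ be the negligible set of $t$'s failing any of: (a) $|\dot\mu_t|$ exists and $t$ is a Lebesgue point of $r\mapsto|\dot\mu_r|^2$; (b) $t$ is a Lebesgue point of $r\mapsto L_r(f_k)$ for every $k$; (c) $g'(t)$ exists. For $t\notin\mathcal N$, given $\epsilon>0$ and $n\in\mathbb N$ I would choose $f_k$ with $\|\varphi_n-f_k\|_{\s^2}<\epsilon$ and split the defect $\tfrac{1}{h}\int_t^{t+h}L_r(\varphi_n)\,dr-L_t(\varphi_n)$ via the triangle inequality into three pieces corresponding to $L_r(\varphi_n-f_k)$, the Lebesgue-defect for $f_k$, and $L_t(\varphi_n-f_k)$. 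Using the bounded-compression estimate $\|\cdot\|_{\mu_r}\leq\sqrt{C}\|\cdot\|_{\s^2}$ together with $|L_r(\cdot)|\leq|\dot\mu_r|\|\cdot\|_{\mu_r}$, the first and third pieces each go to $\sqrt{C}\epsilon|\dot\mu_t|$ in the limit $h\to 0$, while the middle vanishes by (b); arbitrariness of $\epsilon$ yields the Lebesgue-point property for each $\varphi_n$, and a further triangle inequality based on the uniform tail bound above passes from $\varphi_n$ to $\varphi_t$.

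The main obstacle is this last step: because the truncations $\varphi_n=\varphi_{t,n}$ depend on $t$, no standard Lebesgue-differentiation theorem applies uniformly to the family $\{r\mapsto L_r(\varphi_{t,n})\}_{t,n}$ via a single full-measure set. The separability of $\s^2(X)$ is precisely what permits the diagonal approximation by a countable, $t$-independent family $\{f_k\}$, reducing the $t$-dependent Lebesgue problem to a countable intersection of $t$-independent ones; without it, one would need either a selection-theoretic measurability of $t\mapsto\varphi_t$ or a fundamentally different strategy.
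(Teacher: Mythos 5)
Your proposal is correct and follows essentially the same route as the paper: the duality inequality $\int\varphi_t\,\d(\mu_s-\mu_t)\le \tfrac12W_2^2(\mu_s,\nu)-\tfrac12W_2^2(\mu_t,\nu)$ touching at $s=t$, the truncations of Lemma \ref{le:1}, and separability to produce a single negligible set via a countable dense family --- the latter being exactly the content of Proposition \ref{cor:separ}, which the paper simply invokes where you re-derive it inline. The only genuine technical variation is in justifying the exchange of the limits $n\to\infty$ and $h\to 0$: the paper bounds the difference quotients of $\varphi_{n,t}-\varphi_{m,t}$ through the lifting $\ppi$ and dominated convergence of the $r$-averages $\dashint_t^s\|D(\varphi_{n,t}-\varphi_{m,t})\|^2_{\mu_r}\,\d r$, whereas you use the stronger uniform-in-$r$ tail bound $\sup_r\int_{\{\varphi_t>n\}}|D\varphi_t|^2\,\d\mu_r\to 0$ obtained from the quadratic growth estimate \eqref{eq:estlip} and uniform integrability of $\sfd^2(\cdot,y_0)$ over the $W_2$-compact family $\{\mu_r\}$; both arguments are valid.
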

\begin{proof}

Let $\mathcal N\subset [0,1]$ be the $\mathcal L^1$-negligible set given by Proposition \ref{cor:separ}.
We shall prove formula \eqref{eq:derw2} for every $t\in [0,1]\setminus \mathcal N$ such that $\frac12 W_2^2(\mu_\cdot,\nu)$ is differentiable at $t$. Fix such $t$, let $\varphi_t$ be as in the assumptions and notice that
\[
\begin{split}
\frac{1}{2}{W}^2_2(\mu_t, \nu)&=\int \varphi_t \,d\mu_t+ \int \varphi^c \,d\nu,\\
\frac{1}{2}{W}^2_2(\mu_s, \nu) &\geq \int \varphi_t \,d\mu_s+ \int \varphi^c \,d\nu,\qquad\forall s\in[0,1],
\end{split}
\]
and thus
\[
\frac{{W}^2_2(\mu_s, \nu)-{W}^2_2(\mu_t, \nu)}{2} \geq\int \varphi_t \d(\mu_s-\mu_t).
\]
Recall that $\max\{\varphi_t,0\}\in L^1(\mu)$ for every $\mu\in\probt X$ and that $\varphi_t\in L^1(\mu_t)$, so that the integral in the right hand side makes sense. Put $\varphi_{n,t}:=\min\{n,\max\{-n,\varphi_t\}\}$ so that by Lemma \ref{le:1} above we have $\varphi_{n,t}\in\s^2(X)$ for every $n\in\N$ and $\|\varphi_{n,t}-\varphi_{m,t}\|_\mu\to 0$ as $n,m\to\infty$. For every $n\in\N$ we know that $\frac{\d}{\d s}\int\varphi_{n,t}\,\d\mu_s\restr{s=t}=L_t(\varphi_{n,t})$ and by Lemma \ref{le:1} we know that $L_t(\varphi_{n,t})\to L_t(\varphi_t)$ as $n\to\infty$. To conclude it is sufficient to notice that for any lifting $\ppi$ of $(\mu_t)$ we have the bound
\[
\begin{split}
\left|\int(\varphi_{n,t} -\varphi_{m,t})\d\frac{\mu_s-\mu_t}{s-t} \right|&\leq \frac1{|s-t|}\int(\varphi_{n,t} -\varphi_{m,t})(\gamma_s)-(\varphi_{n,t} -\varphi_{m,t})(\gamma_t)\,\d\ppi(\gamma)\\
&\leq\frac1{|s-t|}\iint_t^s|D(\varphi_{n,t}-\varphi_{m,t})|(\gamma_r)|\dot\gamma_r|\,\d r\,\d\ppi(\gamma)\\
&\leq\sqrt{\dashint_t^s \|D(\varphi_{n,t}-\varphi_{m,t})\|^2_{\mu_r}\,\d r}\sqrt{\int\dashint_t^s|\dot\gamma_r|^2\,\d r\,\d\ppi(\gamma)},
\end{split}
\]
and  that the dominated convergence theorem ensures that $\dashint_t^s \|D(\varphi_{n,t}-\varphi_{m,t})\|^2_{\mu_r}\,\d r\to0$ as $n,m\to\infty$.
\end{proof}
\section{The continuity equation $\partial_t\mu_t+\nabla\cdot(\nabla\phi_t\mu_t)=0$}

\subsection{preliminaries: duality between differentials and gradients}

On Euclidean spaces it is often the case that the continuity equation \eqref{eq:contsmooth} can be written as
\begin{equation}
\label{eq:contgrad0}
\partial_t\mu_t+\nabla\cdot(\nabla\phi_t\mu_t)=0,
\end{equation}
for some functions $\phi_t$, so that the vector fields $v_t$ can be represented as gradient of functions. In some sense, the `optimal' velocity vector fields (i.e. those minimizing the $L^2(\mu_t)$-norm) can always be thought of as gradients, as they always belong to the closure of the space of gradients of smooth functions w.r.t. the $L^2(\mu_t)$-norm (i.e. they belong to the - dual of the - cotangent space ${\rm CoTan}_{\mu_t}(\R^d)$), see \cite{AmbrosioGigliSavare08}. Yet, the process of taking completion in general destroys the property of being the gradient of a smooth/Sobolev functions, so that technically speaking general absolutely continuous curves solve \eqref{eq:contsmooth} and only in some cases one can write it as in \eqref{eq:contgrad0}.

It is then the scope of this part of the paper to investigate how one can give a meaning to \eqref{eq:contgrad0} in the non-smooth setting and which sort of information on the curve we can obtain from such `PDE'. According to our interpretation of the continuity equation given in Theorem \ref{thm:main}, the problem reduces to understand in what sense we can write $L_t(f)=\int Df(\nabla\phi_t)\,\d\mu_t$, and thus ultimately to give a meaning to `the differential of a function applied to the gradient of another function'. This has been the scope of \cite{Gigli12}, we recall here the main definitions and properties.
\begin{definition}[The objects $D^\pm f(\nabla g)$]
Let $(X,\sfd,\mm)$ be a m.m.s. and $f,g\in\s^2(X)$.

The functions $D^\pm f(\nabla g) : X \to \mathbb{R}$ are $\mm$-a.e. well defined by
\[
\begin{split}
D^+f(\nabla g) &:= \lim_{\eps\downarrow0}\frac{|D(g+\eps f)|^2-|Dg|^2}{2\eps },\\
D^-f(\nabla g) &:= \lim_{\eps\uparrow0}\frac{|D(g+\eps f)|^2-|Dg|^2}{2\eps }.
\end{split}
\]
\end{definition}
It is immediate to check that for $\eps_1<\eps_2$ we have
\[
\frac{|D(g+\eps_1 f)|^2-|Dg|^2}{2\eps_1 }\leq \frac{|D(g+\eps_2 f)|^2-|Dg|^2}{2\eps_2 },\qquad\mm-a.e.,
\]
so that the limits above can be replaced by $\inf_{\eps>0}$ and $\sup_{\eps<0}$ respectively.

Heuristically, we should think to $D^+f(\nabla g)$ (resp. $D^-f(\nabla g)$) as the maximal (resp. minimal) value of the differential of $f$ applied to all possible gradients of $g$, see \cite{Gigli12} for a discussion on this topic.

The basic algebraic calculus rules for $D^\pm f(\nabla g)$ are the following:
\begin{align}
\label{eq:normpm}
|D^\pm (f_1-f_2)(\nabla g)|&\leq |D(f_1-f_2)||Dg|,\\
\nonumber
D^-f(\nabla g)&\leq D^+f(\nabla g),\\
\label{eq:signpm}
D^+(-f)(\nabla g)&=D^+f(\nabla (-g))=-D^-f(\nabla g),\\
\label{eq:squarepm}
D^\pm g(\nabla g)&=|Dg|^2.
\end{align}
We also have natural chain rules: given $\varphi:\R\to\R$ Lipschitz we have
\begin{equation}
\label{eq:chains}
\begin{split}
D^\pm (\varphi\circ f)(\nabla g)&=\varphi'\circ f D^{\pm{\rm sign}\varphi'\circ f}f(\nabla g),\\
D^\pm f (\nabla \varphi\circ g)&=\varphi'\circ g D^{\pm{\rm sign}\varphi'\circ g}f(\nabla g),
\end{split}
\end{equation}
where $\varphi'\circ f$ (resp. $\varphi'\circ g$) are defined arbitrarily at those $x$'s such that $\varphi$ is not differentiable at $f(x)$ (resp. $g(x)$). In particular, $ D^\pm f (\nabla(\alpha g))=\alpha D^{\pm}f(\nabla g)$ for $\alpha>0$.

Notice that as a consequence of the above we have that for given $g\in\s^2(X)$ the map $\s^2(X)\ni f\mapsto D^+f(\nabla g)$ is $\mm$-a.e. convex, i.e.
\begin{equation}
\label{eq:dpmconv}
D^+((1-\lambda)f_1+\lambda f_2)(\nabla g)\leq(1-\lambda) D^+f_1(\nabla g)+\lambda D^+f_2(\nabla g),\qquad\mm-a.e.,
\end{equation}
for every $f_1,f_2\in\s^2(X)$, and $\lambda\in[0,1]$. Similarly, $f\mapsto D^-f(\nabla g)$ is $\mm$-a.e. concave.

Furthermore, it is easy to see that for $g\in\s^2(X)$ and $\ppi$ test plan we have
\begin{equation}
\label{eq:allplans}
\lims_{t\downarrow0}\int\frac{g(\gamma_t)-g(\gamma_0)}{t}\,\d\ppi(\gamma)\leq \frac12\int|Dg|^2(\gamma_0)\,\d\ppi(\gamma)+\frac12\lims_{t\downarrow0}\frac1t\iint_0^t|\dot\gamma_s|^2\,\d s\,\ppi(\gamma).
\end{equation}
We are therefore lead to the following definition:
\begin{definition}[Plans representing gradients]
Let $(X,\sfd,\mm)$ be a m.m.s.  $g\in \s^2(X)$ and $\ppi$ a test plan.  We say that $\ppi$ represents the gradient of $g$ provided it is a test plan and we have
\begin{equation}
\label{eq:defplrepr}
\limi_{t\downarrow0}\int\frac{g(\gamma_t)-g(\gamma_0)}{t}\,\d\ppi(\gamma)\geq \frac12\int|Dg|^2(\gamma_0)\,\d\ppi(\gamma)+\frac12\lims_{t\downarrow0}\frac1t\iint_0^t|\dot\gamma_s|^2\,\d s\,\ppi(\gamma).
\end{equation}
\end{definition}
It is worth noticing that plans representing gradients exist in high generality (see \cite{Gigli12}).

Differentiation along plans representing gradients is tightly linked to the object $D^\pm f(\nabla g)$ defined above: this is the content of the following simple but crucial theorem proved in \cite{Gigli12} as a generalization of a result originally appeared in \cite{AmbrosioGigliSavare11-2}.
\begin{theorem}[Horizontal and vertical derivatives]
Let $(X,\sfd,\mm)$ be a m.m.s., $f,g\in \s^2(X)$ and $\ppi$ a plan representing the gradient of $g$.

Then
\begin{equation}
\label{eq:horver}
\begin{split}
\int D^- f(\nabla g)\,\d(\e_0)_\sharp\ppi &\leq\limi_{t\downarrow 0}\int \frac{f(\gamma_t) -f(\gamma_0)}{t} \,\d\ppi(\gamma)\\
&\leq \lims_{t\downarrow 0}\int \frac{f(\gamma_t) -f(\gamma_0)}{t} \,\d\ppi(\gamma) \leq \int D^+ f(\nabla g)\,\d(\e_0)_\sharp\ppi.
\end{split}
\end{equation}
\end{theorem}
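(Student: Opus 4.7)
The middle inequality is trivial, so the task is to establish the two outer bounds; by the sign rule \eqref{eq:signpm} it suffices to prove the upper bound, since the lower one follows by replacing $f$ with $-f$. Alternatively one can carry out the two estimates in parallel using $\eps>0$ and $\eps<0$ respectively, which is the approach I will sketch.

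\textbf{The perturbation step.} The plan is to exploit the perturbed function $g+\eps f\in\s^2(X)$. Since $\ppi$ is a test plan, applying \eqref{eq:allplans} to $g+\eps f$ gives, writing $A_t:=\int\frac{g(\gamma_t)-g(\gamma_0)}{t}\,\d\ppi$, $B_t:=\int\frac{f(\gamma_t)-f(\gamma_0)}{t}\,\d\ppi$ and $K_t:=\frac1t\iint_0^t|\dot\gamma_s|^2\,\d s\,\d\ppi$,
\[
\lims_{t\downarrow 0}(A_t+\eps B_t)\;\leq\;\tfrac12\int|D(g+\eps f)|^2\,\d(\e_0)_\sharp\ppi+\tfrac12\lims_{t\downarrow 0}K_t.
\]
Since $\ppi$ represents the gradient of $g$, the definition \eqref{eq:defplrepr} furnishes
\[
\limi_{t\downarrow 0}A_t\;\geq\;\tfrac12\int|Dg|^2\,\d(\e_0)_\sharp\ppi+\tfrac12\lims_{t\downarrow 0}K_t.
\]
Using the elementary inequality $\lims(C_t-A_t)\geq\lims C_t-\lims A_t$ with $C_t=A_t+\eps B_t$, and observing that the kinetic terms $\tfrac12\lims K_t$ are identical on both sides and therefore cancel, one gets
\[
\eps\,\lims_{t\downarrow 0}B_t\;\leq\;\tfrac12\int\bigl(|D(g+\eps f)|^2-|Dg|^2\bigr)\,\d(\e_0)_\sharp\ppi\qquad\text{for }\eps>0,
\]
and the reversed version (with $\limi$ replacing $\lims$) for $\eps<0$, obtained by the same algebraic manipulation but flipping the sign upon dividing.

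\textbf{Passage to the limit in $\eps$.} Dividing by $\eps$ (and flipping the inequality when $\eps<0$) yields
\[
\lims_{t\downarrow 0}B_t\;\leq\;\int\frac{|D(g+\eps f)|^2-|Dg|^2}{2\eps}\,\d(\e_0)_\sharp\ppi\qquad(\eps>0),
\]
and the symmetric lower bound for $\eps<0$. By the monotonicity of the incremental ratio $\eps\mapsto\frac{|D(g+\eps f)|^2-|Dg|^2}{2\eps}$ noted just after the definition of $D^\pm f(\nabla g)$, I can pass to the limit $\eps\downarrow 0$ (resp.\ $\eps\uparrow 0$) by monotone convergence, provided the integrand is dominated by an $L^1((\e_0)_\sharp\ppi)$ function. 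This domination is ensured because $\ppi$ has bounded compression, so $(\e_0)_\sharp\ppi\leq C\mm$, and by \eqref{eq:normpm} the ratio is controlled in absolute value by $|Df|\,(|Dg|+|\eps||Df|)\in L^1(\mm)$ uniformly for small $\eps$. The limits are, by definition, $\int D^\pm f(\nabla g)\,\d(\e_0)_\sharp\ppi$, which delivers the two extremal inequalities in \eqref{eq:horver}.

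\textbf{Main technical point.} The only genuinely delicate manipulation is the $\lims$--$\limi$ algebra used to eliminate the common kinetic term: one must combine the \emph{upper} bound coming from the general estimate \eqref{eq:allplans} (valid for every test plan and every function in $\s^2(X)$) with the \emph{lower} bound coming from the hypothesis that $\ppi$ represents the gradient of $g$, so that the $\tfrac12\lims K_t$ contributions meet with opposite signs and cancel. Everything else is a routine perturbation argument and a monotone-convergence passage to the limit in $\eps$.
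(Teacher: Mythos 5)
Your argument is essentially the paper's: apply \eqref{eq:allplans} to the perturbed function $g+\eps f$, use \eqref{eq:defplrepr} so that the kinetic terms cancel, divide by $\eps$ with the appropriate sign, and pass to the limit using the monotonicity of the difference quotients $\eps\mapsto\frac{|D(g+\eps f)|^2-|Dg|^2}{2\eps}$ together with an integrable dominating function (the paper invokes dominated convergence; your bound $|Df|\,(|Dg|+|\eps||Df|)\in L^1(\mm)$ plus bounded compression is precisely what justifies it). There is, however, one slip, and it sits exactly in the step you single out as the main technical point: the elementary inequality you cite, $\lims(C_t-A_t)\geq\lims C_t-\lims A_t$ with $C_t=A_t+\eps B_t$, yields a \emph{lower} bound on $\eps\lims_{t\downarrow0}B_t$, not the upper bound you then assert. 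What is needed is the other combination, $\lims(C_t-A_t)\leq \lims C_t-\limi A_t$, i.e. $\lims(A_t+\eps B_t)\geq \limi A_t+\eps\lims B_t$ for $\eps>0$, which is the form that matches the hypothesis \eqref{eq:defplrepr}, itself a lower bound on $\limi A_t$; alternatively, observe that \eqref{eq:allplans} applied to $g$ itself together with \eqref{eq:defplrepr} forces $\lim_{t\downarrow0}A_t$ to exist, so the $\lims$/$\limi$ distinction is immaterial here. With that correction the chain $\eps\lims_{t\downarrow0}B_t\leq \lims_{t\downarrow0}(A_t+\eps B_t)-\limi_{t\downarrow0}A_t\leq \frac12\int\bigl(|D(g+\eps f)|^2-|Dg|^2\bigr)\,\d(\e_0)_\sharp\ppi$ is exactly the paper's intermediate inequality, and the remainder of your proof (the symmetric case $\eps<0$ and the monotone passage $\eps\to0$) goes through as written.
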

\begin{proof}
Write inequality \eqref{eq:allplans} for the function $g+\eps f$ and subtract inequality \eqref{eq:defplrepr} to get
\[
\lims_{t\downarrow 0}\eps \int \frac{f(\gamma_t) -f(\gamma_0)}{t}\,\d \ppi \leq \frac{1}{2} \int |D(g+\eps f)|^2 - |Dg|^2\, \d(\e_0)_\sharp \ppi
\]
Divide by $\eps >0$ (resp. $\eps <0$), let $\eps\downarrow0$ (resp. $\eps\uparrow 0$)  and use the dominate convergence theorem to conclude.
\end{proof}

\subsection{The result}
We are now ready to define what it is a solution of the continuity equation \eqref{eq:contgrad0} in a metric measure context.

\begin{definition}[Solutions of $\partial_t\mu_t+\nabla \cdot(\nabla\phi_t\mu_t)=0$]\label{def:contgrad}
Let $(\mu_t)\subset \probt X$ be a $W_2$-continuous  curve with bounded compression and $\{\phi_t\}_{t\in[0,1]}\subset \s^2(X)$ a given family. We say that $(\mu_t)$ solves the continuity equation
\begin{equation}
\label{eq:contgrad}
 \partial_t \mu_t +\nabla \cdot (\nabla \phi_t \mu_t)=0,
\end{equation}
provided:
\begin{itemize}
\item[i)] for every $f\in\s^2(X)$ the maps $(t,x) \mapsto D^\pm f(\nabla\phi_t)(x)$ are $\mathcal L\times \mm$ measurable and the map $\tilde N:[0,1]\to[0,\infty]$ given by
\begin{equation}
\label{eq:tilden}
\frac12\tilde N^2_t:=\esssup_{f\in\s^2(X)}\int D^+f(\nabla \phi_t)\,\d\mu_t-\frac12\|f\|^2_{\mu_t}
\end{equation}
is in $L^2(0,1)$ where $\esssup$ has the same meaning as we mentioned before.
\item[ii)] for every $f\in L^1\cap \s^2(X)$ the map $t\mapsto\int f\,\d\mu_t$ is absolutely continuous and satisfies
\begin{equation}
\label{eq:asdef}
\int D^-f(\nabla \phi_t)\,\d\mu_t\leq\frac{\d}{\d t}\int f\,\d\mu_t\leq \int D^+f(\nabla\phi_t)\,\d\mu_t,\qquad a.e.\ t.
\end{equation}
\end{itemize}
\end{definition}

We then have the following result, analogous to the implication $(ii)\Rightarrow(i)$ of Theorem \ref{thm:main}. As already recalled, even in the smooth framework not all a.c. curves solve \eqref{eq:contgrad0}, so the other implication is in general false.
\begin{proposition}
Let $(X,\sfd,\mm)$ be a m.m.s. and $(\mu_t)\subset\probt X$ a continuous curve with bounded compression solving the continuity equation \eqref{eq:contgrad} for some given family $\{\phi_t\}_{t\in[0,1]}\subset \s^2(X)$.

Then $(\mu_t)$ is 2-a.c. and we have $|\dot\mu_t|\leq\tilde  N_t$ for a.e. $t\in[0,1]$.

If furthermore $\s^2(X)$ is separable, then $\tilde N_t=|\dot\mu_t|=\|\phi_t\|_{\mu_t}$ for a.e. $t$.
\end{proposition}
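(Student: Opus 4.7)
The plan is to reduce the first claim to the implication $(ii)\Rightarrow (i)$ of Theorem~\ref{thm:main} by constructing a family $\{L_t\}_{t\in[0,1]}$ of linear maps on $\s^2(X)$ that encode $(\mu_t)$ as a solution of $\partial_t\mu_t=L_t$ in the sense of Definition~\ref{def:solcont}, with associated norm $N_t\leq \tilde N_t$. The key pointwise bound I will establish is that for every $f\in L^1\cap\s^2(X)$ and a.e.\ $t\in[0,1]$,
\[
\left|\tfrac{\d}{\d t}\int f\,\d\mu_t\right|\leq \tilde N_t\|f\|_{\mu_t}.
\]
This follows by rescaling $f\to\lambda f$ in the defining inequality of $\tilde N_t$: for $\lambda>0$ the chain rule \eqref{eq:chains} yields $\lambda\int D^+f(\nabla\phi_t)\,\d\mu_t\leq \tfrac12\tilde N_t^2+\tfrac{\lambda^2}{2}\|f\|_{\mu_t}^2$, and the resulting quadratic inequality in $\lambda$ forces $\int D^+f(\nabla\phi_t)\,\d\mu_t\leq \tilde N_t\|f\|_{\mu_t}$; a symmetric argument using \eqref{eq:signpm} gives $-\int D^-f(\nabla\phi_t)\,\d\mu_t\leq \tilde N_t\|f\|_{\mu_t}$. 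Combining with the sandwich \eqref{eq:asdef} gives the claim.

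Integrating this and applying Cauchy--Schwarz produces exactly the hypothesis \eqref{eq:forlocal} of Lemma~\ref{le:local} (with $G=\tilde N\in L^2$) for the functional $L(f)(t):=\tfrac{\d}{\d t}\int f\,\d\mu_t$ initially defined on $L^1\cap\s^2(X)$. The Hahn--Banach step in the proof of that lemma adapts to our setting (extending at each $t$ first from the subspace $L^1\cap\s^2(X)$ rather than from $\s^2(X)$) and produces maps $L_t:\s^2(X)\to\R$ with $|L_t(f)|\leq \tilde N_t\|f\|_{\mu_t}$ a.e., for which $(\mu_t)$ solves $\partial_t\mu_t=L_t$ in the sense of Definition~\ref{def:solcont}. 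The quantity $N_t$ defined by \eqref{eq:normt} then satisfies $N_t\leq\tilde N_t$ via the elementary computation $L_t(f)-\tfrac12\|f\|^2_{\mu_t}\leq \tilde N_t\|f\|_{\mu_t}-\tfrac12\|f\|^2_{\mu_t}\leq \tfrac12\tilde N_t^2$, and Theorem~\ref{thm:main} yields the first claim: $(\mu_t)$ is 2-a.c.\ with $|\dot\mu_t|=N_t\leq\tilde N_t$ a.e.

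For the separable case, my plan is to prove the chain $|\dot\mu_t|\leq\tilde N_t\leq\|\phi_t\|_{\mu_t}\leq |\dot\mu_t|$. The middle inequality $\tilde N_t\leq\|\phi_t\|_{\mu_t}$ in fact holds without separability: from \eqref{eq:normpm} applied with $f_2=0$ one has $D^+f(\nabla\phi_t)\leq |Df||D\phi_t|$, so Cauchy--Schwarz and completion of the square give $\int D^+f(\nabla\phi_t)\,\d\mu_t-\tfrac12\|f\|^2_{\mu_t}\leq \tfrac12\|\phi_t\|^2_{\mu_t}$ for every $f\in\s^2(X)$, and the minimality characterization of the essential supremum forces the claim. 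For the last inequality, I invoke Proposition~\ref{cor:separ} on the (now established) 2-a.c.\ curve $(\mu_t)$ to obtain linear maps $L_t:\s^2(X)\to\R$ with $\|L_t\|^*_{\mu_t}=|\dot\mu_t|$ and $L_t(f)=\tfrac{\d}{\d t}\int f\,\d\mu_t$ for every $f\in L^1\cap\s^2(X)$, so \eqref{eq:asdef} gives $L_t(f)\geq \int D^-f(\nabla\phi_t)\,\d\mu_t$ on that subspace. Approximating $\phi_t$ in $\|\cdot\|_{\mu_t}$ by truncated and cutoff versions $f_n\in L^1\cap\s^2(X)$ (the truncations converge to $\phi_t$ in $\|\cdot\|_{\mu_t}$ by locality \eqref{eq:localgrad} and dominated convergence of $|D\phi_t|^2\mathbf{1}_{\{|\phi_t|>n\}}$ in $\mu_t$), using $\|\cdot\|_{\mu_t}$-continuity of $L_t$ together with \eqref{eq:normpm} to pass to the limit on the right-hand side, and finally plugging $f=\phi_t$ together with the identity $D^-\phi_t(\nabla\phi_t)=|D\phi_t|^2$ from \eqref{eq:squarepm}, I obtain $L_t(\phi_t)\geq \|\phi_t\|^2_{\mu_t}$, whence $|\dot\mu_t|=\|L_t\|^*_{\mu_t}\geq L_t(\phi_t)/\|\phi_t\|_{\mu_t}\geq\|\phi_t\|_{\mu_t}$, closing the chain. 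The main delicate step is this last density/extension of \eqref{eq:asdef} to $f=\phi_t$, since $\phi_t\in\s^2(X)$ need not be in $L^1(\mm)$; the technical fix uses truncation together with a Lipschitz compactly-supported cutoff in the spirit of the proof of Proposition~\ref{prop:separ}, while everything else is bookkeeping based on the calculus rules for $D^\pm$.
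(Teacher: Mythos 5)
Your proposal is correct and follows essentially the same route as the paper: the rescaling/quadratic-inequality argument giving $|\int D^{\pm}f(\nabla\phi_t)\,\d\mu_t|\leq \tilde N_t\|f\|_{\mu_t}$, reduction to Theorem \ref{thm:main} via Lemma \ref{le:local} with $G=\tilde N$, and, in the separable case, the chain $|\dot\mu_t|\leq\tilde N_t\leq\|\phi_t\|_{\mu_t}\leq|\dot\mu_t|$ obtained by extending \eqref{eq:asdef} to all of $\s^2(X)$ at a.e.\ $t$ and plugging in $f=\phi_t$ with \eqref{eq:squarepm}. The only (harmless) deviation is that you get $\tilde N_t\leq\|\phi_t\|_{\mu_t}$ directly from the pointwise bound \eqref{eq:normpm} without separability, where the paper uses a countable dense family; just make sure the limit passage on $\int D^{-}f(\nabla\phi_t)\,\d\mu_t$ is justified via the concavity/superadditivity \eqref{eq:dpmconv} together with \eqref{eq:normpm}, as the paper does.
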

\begin{proof} We claim that for every $f\in\s^2(X)$ it holds
\begin{equation}
\label{eq:claimnorm}
\max\left\{\left|\int D^+f(\nabla\phi_t)\,\d\mu_t\right|,\left|\int D^-f(\nabla\phi_t)\,\d\mu_t\right|\right\}\leq \|f\|_{\mu_t} \tilde N_t,\qquad a.e.\ t.
\end{equation}
To this aim, fix a representative of $\tilde N$, a function $f\in\s^2(X)$ and notice that for every $\lambda\geq 0$, by definition of $\tilde N$ and the second of  \eqref{eq:chains} we have
\begin{equation}
\label{eq:3}
\lambda\int D^+f(\nabla\phi_t)\,\d\mu_t\leq \frac{\lambda^2}2\|f\|_{\mu_t}^2+\frac12\tilde N^2_t,
\end{equation}
for $\mathcal L^1$-a.e. $t$. Replacing $f$ with $-f$ and recalling that
\[
-\int D^+f(\nabla\phi_t)\,\d\mu_t=\int D^-(-f)(\nabla\phi_t)\,\d\mu_t\leq \int D^+(-f)(\nabla\phi_t)\,\d\mu_t,
\]
we deduce that \eqref{eq:3} holds for every $\lambda\in\R$. In particular, there is a $\mathcal L^1$-negligible set $\mathcal N\subset[0,1]$ such that for every $t\in[0,1]\setminus\mathcal N$  the inequality \eqref{eq:3} holds for every $\lambda\in\Q$. Given that all the terms in \eqref{eq:3} are continuous in $\lambda$, we deduce that \eqref{eq:3} holds for every $t\in[0,1]\setminus\mathcal N$ and every $\lambda\in\R$, which yields
\[
\left|\int D^+f(\nabla\phi_t)\,\d\mu_t\right|\leq \|f\|_{\mu_t} \tilde N_t,\qquad a.e.\ t.
\]
Arguing analogously with $D^-f(\nabla\phi_t)$ in place of $D^+f(\nabla\phi_t)$ we obtain \eqref{eq:claimnorm}.

Now define a linear operator $L:\s^2(X)\to L^1(0,1)$ as
\[
L(f)(t):=\frac\d{\d t}\int f\,\d\mu_t,
\]
and observe that for every $t,s\in[0,1]$, $t<s$, taking into account \eqref{eq:asdef} and \eqref{eq:claimnorm} we have
\[
\begin{split}
\int_t^s|L(f)(r)|\,\d r&\leq \int_t^s\max\left\{\left|\int D^+f(\nabla\phi_r)\,\d \mu_r\right|,\left|\int D^-f(\nabla\phi_r)\,\d \mu_r\right|\right\}\,\d r\\
&\leq \int_t^s\|f\|_{\mu_r}\tilde N_r\,\d r\leq \sqrt{\int_t^s\tilde N_r^2\,\d r} \sqrt{\int_t^s\|f\|^2_{\mu_r}\,\d r}.
\end{split}
\]
Hence we can apply first Lemma \ref{le:local} (with $G:=\tilde N$) and then Theorem \ref{thm:main} to deduce that $(\mu_t)$ is 2-a.c. with $|\dot\mu_t|\leq \tilde N_t$ for a.e. $t\in[0,1]$.

If $\s^2(X)$ is separable, then arguing exactly as in the proof of Proposition \ref{cor:separ} and using the convexity (resp. concavity) of $f\mapsto \int D^+f(\nabla\phi_t)\,\d\mu_t$ (resp. $f\mapsto \int D^-f(\nabla \phi_t)\,\d\mu_t$) expressed in \eqref{eq:dpmconv} we deduce the existence of a $\mathcal L^1$-negligible set $\mathcal N\subset[0,1]$ such that for $t\in[0,1]\setminus\mathcal N$ the conclusions $(i),(ii)$ of Proposition \ref{cor:separ} hold and furthermore
\[
\int D^-f(\nabla\phi_t)\,\d\mu_t\leq L_t(f)\leq \int D^+f(\nabla\phi_t)\,\d\mu_t,\qquad\forall f\in \s^2(X).
\]
Choosing $f:=\phi_t$ and recalling \eqref{eq:squarepm} we obtain
\[
\|\phi_t\|^2_{\mu_t}=L_t(\phi_t)\leq \|\phi_t\|_{\mu_t}\|L_t\|^*_{\mu_t},\qquad\forall t\in[0,1]\setminus\mathcal N,
\]
and hence $\|\phi_t\|_{\mu_t}\leq \|L_t\|^*_{\mu_t}=N_t=|\dot\mu_t|$ for a.e. $t$.  On the other hand, letting $(f_n)\subset \s^2(X)$ be a countable dense set, by \eqref{eq:normpm} we know that $\frac12\tilde N^2_t=\sup_nD^+f_n(\nabla\phi_t)-\frac12\|f_n\|_{\mu_t}^2$ for a.e. $t$ and thus $\tilde N_t\leq \|\phi_t\|_{\mu_t}$ for a.e. $t$.
\end{proof}
The continuity equation \eqref{eq:contgrad} has very general relations with the concept of `plans representing gradients', as shown by the following result:
\begin{theorem}\label{thm:contgrad}
Let $(\mu_t)\subset \probt X$ be a 2-a.c. curve with bounded compression, $(t,x)\mapsto\phi_t(x)$ a Borel map such that $\phi_t\in \s^2(X)$ for a.e. $t\in[0,1]$ and $\ppi$ a lifting of $(\mu_t)$.

Then the following are true.
\begin{itemize}
\item[i)] Assume that $({\rm restr}_t^{1})_\sharp\ppi$ represents the gradient of $(1-t)\phi_t$ for a.e. $t\in[0,1]$. Then $(\mu_t)$ solves the continuity equation \eqref{eq:contgrad}.
\item[ii)] Assume that $\s^2(X)$ is separable and that $(\mu_t)$ solves the continuity equation \eqref{eq:contgrad}. Then $({\rm restr}_t^{1})_\sharp\ppi$ represents the gradient of $(1-t)\phi_t$ for a.e. $t\in[0,1]$.
\end{itemize}
\end{theorem}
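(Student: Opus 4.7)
For $t\in[0,1)$ set $\ppi_t := ({\rm restr}_t^1)_\sharp\ppi$, so that $(\e_r)_\sharp\ppi_t = \mu_{t+r(1-t)}$, the rescaled metric speed is $(1-t)|\dot\gamma_{t+r(1-t)}|$, and $\ppi_t$ is a test plan. With $s:=t+r(1-t)$ we have, for any $f\in\s^2(X)$,
\begin{equation*}
\int\frac{f(\sigma_r)-f(\sigma_0)}{r}\,d\ppi_t(\sigma) = \frac{1-t}{s-t}\int f\,d(\mu_s-\mu_t),\quad \frac{1}{r}\iint_0^r|\dot\sigma_u|^2\,du\,d\ppi_t(\sigma) = \frac{(1-t)^2}{s-t}\int_t^s|\dot\mu_u|^2\,du,
\end{equation*}
and by \eqref{eq:chains} we have $D^\pm f(\nabla(1-t)\phi_t) = (1-t)D^\pm f(\nabla\phi_t)$ since $1-t\geq 0$. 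For $(i)$, apply the horizontal--vertical derivatives inequality \eqref{eq:horver} to $\ppi_t$ (which represents $\nabla((1-t)\phi_t)$ by hypothesis); combining with the identities above and dividing by $1-t$ gives, for a.e.\ $t$,
\begin{equation*}
\int D^-f(\nabla\phi_t)\,d\mu_t\leq\liminf_{s\downarrow t}\frac{1}{s-t}\int f\,d(\mu_s-\mu_t)\leq\limsup_{s\downarrow t}\frac{1}{s-t}\int f\,d(\mu_s-\mu_t)\leq\int D^+f(\nabla\phi_t)\,d\mu_t.
\end{equation*}
Since $\ppi$ is a test plan, the estimate used in $(i)\Rightarrow(ii)$ of Theorem \ref{thm:main} shows that $t\mapsto\int f\,d\mu_t$ is absolutely continuous for every $f\in L^1\cap\s^2(X)$, so right and left derivatives agree a.e., yielding \eqref{eq:asdef}. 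Joint measurability of $(t,x)\mapsto D^\pm f(\nabla\phi_t)(x)$ is inherited from the Borelity of $(t,x)\mapsto\phi_t(x)$, and the $L^2$-bound on $\tilde N$ follows from $\tilde N_t\leq\|\phi_t\|_{\mu_t}\leq|\dot\mu_t|$: the first inequality is obtained by optimizing the displayed chain, the second by testing the representing identity with $f=\phi_t$ and using \eqref{eq:squarepm} together with \eqref{eq:allplans}.

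\textbf{Part (ii), reduction.} The proposition preceding the theorem gives 2-absolute continuity of $(\mu_t)$ and $|\dot\mu_t|=\|\phi_t\|_{\mu_t}$ for a.e.\ $t$. By the setup identities, the right-hand side of the representing inequality for $\ppi_t$ and $(1-t)\phi_t$ evaluates at Lebesgue points of $|\dot\mu|^2$ to
\begin{equation*}
\tfrac12(1-t)^2\|\phi_t\|^2_{\mu_t}+\tfrac12(1-t)^2|\dot\mu_t|^2=(1-t)^2\|\phi_t\|^2_{\mu_t},
\end{equation*}
so the task reduces to proving $\liminf_{s\downarrow t}(s-t)^{-1}\int\phi_t\,d(\mu_s-\mu_t)\geq\|\phi_t\|^2_{\mu_t}$ for a.e.\ $t$. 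Separability of $\s^2(X)$ and Proposition \ref{cor:separ} give, outside a negligible $\mathcal N\subset[0,1]$, operators $L_t$ with $L_t(f)=\frac{d}{ds}\int f\,d\mu_s\restr{s=t}$ for every $f\in L^1\cap\s^2(X)$. Using \eqref{eq:asdef} on a countable dense $\{f_n\}\subset\s^2(X)$ together with the subadditivity bound $|D^\pm f(\nabla\phi_t)-D^\pm f_n(\nabla\phi_t)|\leq |D(f-f_n)||D\phi_t|$ (derived from \eqref{eq:dpmconv}, \eqref{eq:chains}, \eqref{eq:normpm}), the inequality $\int D^-f(\nabla\phi_t)\,d\mu_t\leq L_t(f)\leq\int D^+f(\nabla\phi_t)\,d\mu_t$ extends to every $f\in\s^2(X)$ and every $t\notin\mathcal N$; choosing $f=\phi_t$ and invoking \eqref{eq:squarepm} yields $L_t(\phi_t)=\|\phi_t\|^2_{\mu_t}$.

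\textbf{The main obstacle} is that $\phi_t$ is generally not in $L^1$, so the identity $\liminf_{s\downarrow t}(s-t)^{-1}\int\phi_t\,d(\mu_s-\mu_t)=L_t(\phi_t)$ does not come directly from Proposition \ref{cor:separ}. Following the truncation scheme of Propositions \ref{cor:separ} and \ref{prop:derw2}, approximate $\phi_t$ by $\phi_{n,t}$ obtained by cutting values to $[-n,n]$ and multiplying by a Lipschitz cutoff of bounded support (harmlessly assuming $\mm$ gives finite mass to bounded sets). Then $\phi_{n,t}\in L^1\cap\s^2(X)$, $\|\phi_t-\phi_{n,t}\|_{\mu_t}\to 0$ by \eqref{eq:localgrad}--\eqref{eq:chainbase}, and $L_t(\phi_{n,t})\to L_t(\phi_t)$ by continuity of $L_t$ in the seminorm $\|\cdot\|_{\mu_t}$. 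For each $n$ fixed, Proposition \ref{cor:separ} gives $\liminf_{s\downarrow t}(s-t)^{-1}\int\phi_{n,t}\,d(\mu_s-\mu_t)=L_t(\phi_{n,t})$, while the remainder is estimated via
\begin{equation*}
\Bigl|\int(\phi_t-\phi_{n,t})\,d(\mu_s-\mu_t)\Bigr|\leq\sqrt{\int_t^s\|\phi_t-\phi_{n,t}\|^2_{\mu_r}\,dr}\sqrt{\int_t^s|\dot\mu_r|^2\,dr},
\end{equation*}
whose right-hand side divided by $s-t$ tends to $\|\phi_t-\phi_{n,t}\|_{\mu_t}|\dot\mu_t|$ as $s\downarrow t$ (using continuity of $r\mapsto\mu_r$ in duality with $L^1(\mm)$ and Lebesgue differentiation for $|\dot\mu|^2$), and then to $0$ as $n\to\infty$. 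Combining, one obtains the required lower bound on the $\liminf$ and closes the argument.
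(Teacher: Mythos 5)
Your proof is correct and follows essentially the same route as the paper: rescaling through ${\rm restr}_t^1$ together with the horizontal--vertical derivative inequality \eqref{eq:horver} for part (i), and for part (ii) the pointwise sandwich $\int D^-f(\nabla\phi_t)\,\d\mu_t\le L_t(f)\le\int D^+f(\nabla\phi_t)\,\d\mu_t$ for all $f\in\s^2(X)$ obtained by density, then tested on $f=\phi_t$ and combined with $\|\phi_t\|_{\mu_t}=|\dot\mu_t|=\|L_t\|^*_{\mu_t}$; you are in fact more explicit than the paper about condition (i) of Definition \ref{def:contgrad} and about the fact that $\phi_t$ need not be in $L^1$. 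The one blemish is the added hypothesis that $\mm$ gives finite mass to bounded sets in your truncation step: it is not among the assumptions of the theorem and is avoidable, because the identification of $L_t(f)$ with $\lim_{s\to t}(s-t)^{-1}\int \big(f(\gamma_s)-f(\gamma_t)\big)\,\d\ppi(\gamma)$ extends from the countable dense set $\{f_n\}$ to all of $\s^2(X)$ directly via the uniform bound $\lims_{s\to t}|s-t|^{-1}\big|\int (f(\gamma_s)-f(\gamma_t))\,\d\ppi(\gamma)\big|\le\sqrt C\,\|f\|_{\s^2}\,|\dot\mu_t|$ from the proof of Proposition \ref{cor:separ}, with no need for $\phi_t$ or its truncations to belong to $L^1(X,\mm)$.
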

\begin{proof}$\ $\\
\noindent{\bf (i)} Let $\mathcal A\subset (0,1)$ be the set of $t$'s such that  $({\rm restr}_t^{1})_\sharp\ppi$ represents the gradient of $(1-t)\phi_t$, so that by assumption we know that $\mathcal L^1(\mathcal A)=1$. Pick $f\in\s^2(X)$ and recall that by Theorem \ref{thm:main} we know that
\begin{equation}
\label{eq:1}
\frac \d{\d t}\int f\,\d\mu_t=L_t(f),\qquad a.e. \ t\in[0,1].
\end{equation}
Fix $t\in \mathcal A$ such that \eqref{eq:1} holds and notice that
\[
\begin{split}
\frac \d{\d t}\int f\,\d\mu_t=\lim_{h\downarrow0}\int \frac{f(\gamma_{t+h})-f(\gamma_t)}{h}\,\d\ppi(\gamma)=\frac{1}{1-t}\lim_{h\downarrow0}\int \frac{f(\gamma_{h})-f(\gamma_0)}{h}\,\d\ppi_t(\gamma),
\end{split}
\]
so that recalling \eqref{eq:horver} and \eqref{eq:chains} we conclude.

\noindent{\bf (ii)} With exactly the same approximation procedure used in the proof of Proposition \ref{cor:separ}, we see that there exists a $\mathcal L^1$-negligible set $\mathcal N\subset[0,1]$ such that the thesis of Proposition \ref{cor:separ} is fulfilled and furthermore for every $t\in [0,1]\setminus\mathcal N$ we have
\begin{equation}
\label{eq:gradpoint}
\int D^{-} f(\nabla \phi_t)\,\d\mu_t \leq L_t(f)\leq  \int D^{+} f(\nabla \phi_t)\,\d\mu_t,\qquad \forall f\in \s^2(X).
\end{equation}
Fix $t\in[0,1]\setminus\mathcal N$ and observe that by point $(i)$ of Proposition \ref{cor:separ} we have that
\begin{equation}
\label{eq:normppi}
|\dot\mu_t|^2=\lim_{h\downarrow0}\frac1h\iint_t^{t+h}|\dot\gamma_s|^2\,\d\ppi(\gamma)=\frac1{(1-t)^2}\lim_{h\downarrow0}\frac1h\iint_0^h|\dot\gamma_s|^2\,\d\ppi_t(\gamma).
\end{equation}
Now pick $f:=\phi_t$ in \eqref{eq:gradpoint} and recall the identity $D^\pm f(\nabla f)=|Df|^2$ $\mm$-a.e. valid for every $f\in\s^2(X)$ to deduce
\[
\int|D\phi_t|^2\,\d\mu_t=L_t(\phi_t)=\lim_{h\downarrow0}\int \phi_t\,\d\frac{\mu_{t+h}-\mu_t}{h}=\frac1{1-t}\lim_{h\downarrow0}\int\frac{\phi_t(\gamma_h)-\phi_t(\gamma_0)}{h}\,\d\ppi_t(\gamma).
\]
This last identity, \eqref{eq:normppi} and the fact that $\|L_t\|_{\mu_t}^*=|\dot\mu_t|$ ensure  that $\ppi_t$ represents the gradient of $(1-t)\phi_t$, as claimed.
\end{proof}

\section{Two important examples}
We conclude the paper discussing two important examples of absolutely continuous curves on $\probt X$: the heat flow and the geodesics. These examples already appeared in the literature \cite{Gigli-Kuwada-Ohta10,AmbrosioGigliSavare11,Gigli13}, we report them here only to show that they are consistent with the concepts we introduced.

We start with the heat flow. Recall that the map {(called Cheeger energy in some of the bibliographical references)} $E:L^2(X,\mm)\to [0,\infty]$ given by
\[
E(f):=\left\{\begin{array}{ll}
\displaystyle{\frac12\int |Df|^2\,\d\mm},&\qquad\textrm{ if }f\in\s^2(X),\\
+\infty,&\qquad\textrm{ otherwise},
\end{array}
\right.
\]
is convex, lower semicontinuous and with dense domain. Being $L^2(X)$ an Hilbert space, we then know by the classical theory of gradient flows in Hilbert spaces (see e.g. \cite{AmbrosioGigliSavare08} and references therein) that for any $\rho\in L^2(X)$ there exists a unique continuous curve $[0,\infty)\ni t\mapsto \rho_t\in L^2(X)$ with $f_0=f$ which is locally absolutely continuous on $(0,\infty)$ and that satisfies
\[
\frac\d{\d t}\rho_t\in-\partial^-E(\rho_t),\qquad a.e. \ t>0.
\]
As in \cite{AmbrosioGigliSavare11}, we shall call any such gradient flow a \emph{heat flow}. It is immediate to check that defining $D(\Delta):=\{\rho:\partial^-E(\rho)\neq\emptyset\}$ and for  $\rho\in D(\Delta)$ the Laplacian $\Delta \rho$ as the opposite of the element of minimal norm in $\partial^-E(\rho)$, for any heat flow $(\rho_t)$ we have $\rho_t\in D(\Delta)$ for any $t>0$ and
\[
\frac{\d}{\d t}\rho_t=\Delta \rho_t,\qquad a.e. \ t>0,
\]
in accordance with the classical case. It is our aim now to check that, under reasonable assumptions, putting $\mu_t:=\rho_t\mm$, the curve $(\mu_t)$ solves
\[
\partial_t\mu_t+\nabla\cdot(\nabla(-\log\rho_t)\mu_t)=0.
\]
To this aim, recall that for any heat flow $(\rho_t)$ we have the weak maximum principle
\begin{equation}
\label{eq:maxpr}
\rho_0\leq C \textrm{ (resp. $\rho_0\geq c$) }\mm-a.e.\ \qquad\Rightarrow  \qquad\rho_t\leq C \textrm{ (resp. $\rho_t\geq c$) }\mm-a.e.\textrm{ for any $t>0$},
\end{equation}
where $c,C\in\R$ and the estimate
\begin{equation}
\label{eq:controlspeed}
\int_0^\infty \|\Delta\rho_t\|^2_{L^2(X)}\,\d t\leq E(\rho_t).
\end{equation}
Furthermore, if $\mm\in\prob X$ then $L^2(X,\mm)\subset L^1(X,\mm)$ and the mass preservation property holds:
\begin{equation}
\label{eq:mass}
\int\rho_t\,\d\mm=\int \rho_0\,\d\mm,\qquad\forall t>0.
\end{equation}
See \cite{AmbrosioGigliSavare11} for the simple proof of these facts.

We can now state our result concerning the heat flow as solution of the continuity equation.

\begin{proposition}[Heat flow]\label{prop:heat} Let $(X,\sfd,\mm)$ be a metric measure space with $\mm\in\probt X$ and $\rho_0$ a probability density such that $c\leq \rho_0\leq C$ $\mm$-a.e. for some $c,C>0$ (in particular $\rho_0\in L^2(X,\mm)$) and $E(\rho_0)<\infty$. Let $(\rho_t)$ be the heat flow starting from  from $\rho_0$.

Then the curve $[0,1]\ni t\mapsto\mu_t:=\rho_t\mm$ solves the continuity equation
\[
\partial_t\mu_t+\nabla\cdot(\nabla(-\log\rho_t)\mu_t)=0.
\]
\end{proposition}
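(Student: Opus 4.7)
\emph{Proof plan.} I verify the two conditions of Definition \ref{def:contgrad} with $\phi_t:=-\log\rho_t$. The maximum principle \eqref{eq:maxpr} gives $c\le\rho_t\le C$ $\mm$-a.e.\ for every $t\in[0,1]$, so $(\mu_t)$ has bounded compression; the chain rule \eqref{eq:chainbase} applied to the Lipschitz truncation $r\mapsto-\log(\max\{c,\min\{C,r\}\})$ composed with $\rho_t$ puts $\phi_t\in\s^2(X)$ with $|D\phi_t|=|D\rho_t|/\rho_t$ $\mm$-a.e., and the required $\mathcal L\times\mm$-measurability of $(t,x)\mapsto D^\pm f(\nabla\phi_t)(x)$ follows from strong $L^2$-continuity of $t\mapsto\rho_t$ together with the monotone-limit description of $D^\pm$ in $\eps$.

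The heart of the argument is the subgradient relation $-\Delta\rho_t\in\partial^- E(\rho_t)$, which reads
\[
E(\rho_t+\eps f)-E(\rho_t)\;\ge\;-\eps\int f\,\Delta\rho_t\,d\mm,\qquad \forall\, f\in\s^2(X),\ \eps\in\R.
\]
Dividing by $\eps>0$ (resp.\ $\eps<0$) and applying monotone convergence to the difference quotient $\eps^{-1}\bigl(|D(\rho_t+\eps f)|^2-|D\rho_t|^2\bigr)/2$ (which is dominated by $|D\rho_t||Df|+\tfrac12|Df|^2$ for $|\eps|\le 1$) yields
\[
\int D^-f(\nabla\rho_t)\,d\mm\;\le\;-\int f\,\Delta\rho_t\,d\mm\;\le\;\int D^+f(\nabla\rho_t)\,d\mm.
\]
The chain rule \eqref{eq:chains} applied with $\varphi(r)=-\log r$, whose derivative is negative on $[c,C]$, then gives $D^\pm f(\nabla\phi_t)=-\rho_t^{-1}\,D^\mp f(\nabla\rho_t)$ $\mm$-a.e.; multiplying by $\rho_t$, integrating against $\mm$, and negating the previous display produces
\[
\int D^-f(\nabla\phi_t)\,d\mu_t\;\le\;\int f\,\Delta\rho_t\,d\mm\;\le\;\int D^+f(\nabla\phi_t)\,d\mu_t.
\]
Since $\tfrac{d}{dt}\rho_t=\Delta\rho_t$ in $L^2$, for $f\in L^2\cap\s^2(X)$ this is exactly \eqref{eq:asdef}; the extension to $f\in L^1\cap\s^2(X)$ is achieved through the truncations $f_N:=\min\{N,\max\{-N,f\}\}\in L^\infty\cap\s^2(X)\subset L^2\cap\s^2(X)$, passing to the limit $N\to\infty$ using the locality \eqref{eq:localgrad} to ensure $|Df_N|\to|Df|$ in $L^2$ and the uniform bound $\rho_t\le C$ to have $\int f_N\,d\mu_t\to\int f\,d\mu_t$ uniformly in $t$.

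For condition (i) in Definition \ref{def:contgrad}, Young's inequality applied to \eqref{eq:tilden} yields $\tilde N_t\le\|\phi_t\|_{\mu_t}$, and
\[
\|\phi_t\|_{\mu_t}^2\;=\;\int\frac{|D\rho_t|^2}{\rho_t}\,d\mm\;\le\;\tfrac{1}{c}\int|D\rho_t|^2\,d\mm\;=\;\tfrac{2}{c}\,E(\rho_t)\;\le\;\tfrac{2}{c}\,E(\rho_0)<\infty,
\]
since $E$ is non-increasing along its own gradient flow. Hence $\tilde N$ is uniformly bounded on $[0,1]$, and in particular belongs to $L^2(0,1)$. The steps I expect to require the most care are the rigorous $\eps\to 0$ passage in the subgradient inequality (the domination estimate above is the crucial point) and the truncation extension of the absolute continuity and derivative identity from $L^2\cap\s^2(X)$ to $L^1\cap\s^2(X)$, outlined above.
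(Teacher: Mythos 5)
Your argument is correct and follows essentially the same route as the paper: maximum principle for bounded compression, the chain rule to trade $\nabla(-\log\rho_t)$ for $\nabla\rho_t$, the subdifferential inequality for the energy $E$ to produce the two-sided bound by $\int D^{\pm}f(\nabla\rho_t)\,\d\mm$, and truncation to pass from $L^2\cap\s^2(X)$ to $L^1\cap\s^2(X)$. The only items you leave implicit are the preliminary checks that $\rho_t$ remains a probability density (mass preservation) and that $(\mu_t)$ is $W_2$-continuous with $\mu_t\in\probt X$, both of which the paper dispatches in one line from the $L^2$-continuity of the flow, the bound $\rho_t\leq C$ and the assumption $\mm\in\probt X$.
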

\begin{proof} By \eqref{eq:mass} and \eqref{eq:maxpr} we know that $\rho_t$ is a probability density for every $t$ and \eqref{eq:maxpr} again and the assumptions $\mm\in\probt X$ and $\rho_0\leq C\mm$ ensure that $\mu_t\in\probt X$ for every $t\in[0,1]$ and that $(\mu_t)$ has bounded compression. The $W_2$-continuity of $(\mu_t)$ is a simple consequence of the $L^2$-continuity of $(\rho_t)$ and the bounds $\rho_t\leq C$, $\mm\in\probt X$. Also, recalling the chain rule \eqref{eq:chainbase} and the maximum principle \eqref{eq:maxpr} we have
\[
 \|\log\rho_t\|^2_{\mu_t}=\int|D(\log\rho_t)|^2\,\d\mu_t=\int \frac{|D\rho_t|^2}{\rho_t}\,\d\mm\leq \frac1c\int |D\rho_t|^2\,\d\mm,
\]
so that \eqref{eq:controlspeed} ensures that $\int_0^1\|\log\rho_t\|_{\mu_t}^2<\infty$, which directly yields that point $(i)$ of Definition \ref{def:contgrad} is fulfilled.

It remains to prove that for every $f\in L^1\cap\s^2(X)$ the map $t\mapsto\int f\,\d\mu_t$ is absolutely continuous and fulfills
\[
\int D^-f(\nabla(-\log\rho_t))\,\d\mu_t\leq \frac{\d}{\d t}\int f\,\d\mu_t\leq\int D^+f(\nabla(-\log\rho_t))\,\d\mu_t.
\]
Taking into account the chain rule \eqref{eq:chains} the above can be written as
\begin{equation}
\label{eq:toprove}
-\int D^+f(\nabla\rho_t)\,\d\mm\leq \frac{\d}{\d t}\int f\,\d\mu_t\leq-\int D^-f(\nabla\rho_t)\,\d\mm.
\end{equation}
Pick  $f\in L^2\cap\s^2(X)$ and notice that $t\mapsto\int f\,\d\mu_t=\int f\rho_t\,\d\mm$ is continuous on $[0,1]$ and locally absolutely continuous on $(0,1]$. The inequality
\[
\left|\frac\d{\d t}\int f\rho_t\,\d\mm\right|\leq \int|f||\Delta\rho_t|\,\d\mm\leq\frac12\|f\|^2_{L^2}+\frac12\|\Delta\rho_t\|^2_{L^2},
\]
the bound \eqref{eq:controlspeed} and the assumption $E(\rho_0)<\infty$ ensure that the derivative of $t\mapsto\int f\rho_t\,\d\mm$ is in $L^1(0,1)$, so that this function is absolutely continuous on $[0,1]$.

The fact that $-\frac\d{\d t}\rho_t\in-\partial^-E(\rho_t)$ for a.e. $t$ grants that for $\eps\in\R$ we have
\begin{equation}
\label{eq:heat}
\frac{\d}{\d t}\int \eps f\rho_t\,\d\mm=\int \eps f\frac{\d}{\d t}\rho_t\,\d\mm\leq E(\rho_t-\eps f)-E(f),\qquad a.e.\ t.
\end{equation}
Divide by $\eps>0$ and let $\eps\downarrow0$ to obtain
\[
\frac{\d}{\d t}\int f\rho_t\,\d\mm\leq\limi_{\eps\downarrow0}\frac{E(\rho_t-\eps f)-E(f)}{\eps}=\lim_{\eps\downarrow0}\int\frac{|D(\rho_t-\eps f)|^2-|D\rho_t|^2}{2\eps}\,\d\mm=-\int D^-f(\nabla\rho_t)\,\d\mm,
\]
which is the second inequality in \eqref{eq:toprove}. The first one is  obtained starting from \eqref{eq:heat}, dividing by $\eps<0$ and letting $\eps\uparrow0$.

The general case of $f\in L^1\cap\s^2(X)$ can now be obtained with a simple truncation argument, we omit the details.
\end{proof}

\bigskip

We now turn to the study of geodesics. In the smooth Euclidean/Riemannian framework a geodesic $(\mu_t)$ solves
\[
\partial_t\mu_t+\nabla\cdot(\nabla\phi_t\mu_t)=0,
\]
where $\phi_t:=Q_{1-t}(-\varphi^c)$ and $\varphi$ is a Kantorovich potential inducing $(\mu_t)$.

We want to show that the same holds on metric measure spaces, at least for geodesics with bounded compressions. This will be achieved as a simple consequence of Theorem \ref{thm:contgrad} and the following fact:
\begin{theorem}\label{thm:geod}
Let $(X,\sfd,\mm)$ be a m.m.s., $(\mu_t)\subset\probt X$ a geodesic with bounded compression and $\varphi\in\s^2(X)$ a Kantorovich potential from $\mu_0$ to $\mu_1$. Then:
\begin{itemize}
\item[i)] Any lifting $\ppi$ of $(\mu_t)$ represents the gradient of $-\varphi$.
\item[ii)] For any $t\in(0,1]$ the function $(1-t)Q_{1-t}(-\varphi^c)$ is a Kantorovich potential from $\mu_t$ to $\mu_1$.
\end{itemize}
\end{theorem}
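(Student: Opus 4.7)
The strategy is to establish (ii) by an explicit identification of the $c$-superdifferential of $\varphi_t:=(1-t)Q_{1-t}(-\varphi^c)$, and then to deduce (i) by combining a Fatou-type lower bound with the pointwise Lipschitz estimate available for $c$-concave functions.

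For (ii), the starting observation is the representation
\[
\varphi_t(x)=\inf_{y\in X}\Bigl\{\tfrac12\sfd^2(x,y)-(1-t)\varphi^c(y)\Bigr\},
\]
which exhibits $\varphi_t$ as the $c$-transform of $(1-t)\varphi^c$ and hence is automatically $c$-concave. The plan is then to prove that $(\gamma_t,\gamma_1)\in\partial^c\varphi_t$ for $\ppi$-a.e.\ $\gamma$, where $\ppi$ is any lifting of the geodesic; integrating this pointwise identity against $\ppi$, together with the geodesic relation $W_2^2(\mu_t,\mu_1)=(1-t)^2 W_2^2(\mu_0,\mu_1)$, certifies $\varphi_t$ as a Kantorovich potential. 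Substituting $y=\gamma_1$ in the defining infimum and using $\varphi(\gamma_0)+\varphi^c(\gamma_1)=\tfrac12\sfd^2(\gamma_0,\gamma_1)$ gives the upper bound $\varphi_t(\gamma_t)\leq\tfrac12\sfd^2(\gamma_t,\gamma_1)-(1-t)\varphi^c(\gamma_1)$. For the matching lower bound, the key reduction, via the $c$-concavity inequality $\varphi^c(y)\leq\tfrac12\sfd^2(\gamma_0,y)-\varphi(\gamma_0)$, is to the purely metric estimate
\[
\sfd^2(\gamma_t,y)\geq(1-t)\sfd^2(\gamma_0,y)-t(1-t)\sfd^2(\gamma_0,\gamma_1)\qquad\forall y\in X,
\]
which follows from the reverse triangle inequality $\sfd(\gamma_t,y)\geq|\sfd(\gamma_0,y)-t\sfd(\gamma_0,\gamma_1)|$ combined with the geodesic identity $\sfd(\gamma_0,\gamma_t)=t\sfd(\gamma_0,\gamma_1)$ and an elementary algebraic expansion. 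Once $(\gamma_t,\gamma_1)\in\partial^c\varphi_t$ is established, one also reads off $\varphi_t^c(\gamma_1)=(1-t)\varphi^c(\gamma_1)$ from the sandwich $(1-t)\varphi^c\leq((1-t)\varphi^c)^{cc}=\varphi_t^c\leq\tfrac12\sfd^2(\gamma_t,\gamma_1)-\varphi_t(\gamma_t)$.

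For (i), since $\ppi$ is concentrated on constant-speed geodesics, $\tfrac1t\iint_0^t|\dot\gamma_s|^2\,\d s\,\d\ppi=W_2^2(\mu_0,\mu_1)$ for every $t>0$, so the requirement that $\ppi$ represent the gradient of $-\varphi$ reduces to
\[
\liminf_{t\downarrow 0}\int\frac{\varphi(\gamma_0)-\varphi(\gamma_t)}{t}\,\d\ppi(\gamma)\geq\tfrac12\int|D\varphi|^2\,\d\mu_0+\tfrac12 W_2^2(\mu_0,\mu_1).
\]
The $c$-concavity inequality combined with $\sfd(\gamma_t,\gamma_1)=(1-t)\sfd(\gamma_0,\gamma_1)$ yields the nonnegative pointwise bound $\tfrac{\varphi(\gamma_0)-\varphi(\gamma_t)}{t}\geq\tfrac{2-t}{2}\sfd^2(\gamma_0,\gamma_1)$, so Fatou's lemma gives $\liminf\geq W_2^2(\mu_0,\mu_1)$. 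The remaining ingredient is the bound $\int|D\varphi|^2\,\d\mu_0\leq W_2^2(\mu_0,\mu_1)$, which would be derived from the pointwise estimate $\lip(\varphi)(\gamma_0)\leq\sfd(\gamma_0,\gamma_1)$ valid for $\ppi$-a.e.\ $\gamma$ (using $(\gamma_0,\gamma_1)\in\partial^c\varphi$), combined with $|D\varphi|\leq\lip(\varphi)$ $\mm$-a.e. Put together, these two estimates force the defining inequality to hold, confirming that $\ppi$ represents the gradient of $-\varphi$.

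The main technical obstacle is the pointwise Lipschitz estimate $\lip(\varphi)(x)\leq\sfd(x,y)$ for $y\in\partial^c\varphi(x)$: the one-sided bound $\limsup_{z\to x}\tfrac{\varphi(z)-\varphi(x)}{\sfd(x,z)}\leq\sfd(x,y)$ is immediate from $\varphi(z)-\varphi(x)\leq\tfrac12(\sfd^2(z,y)-\sfd^2(x,y))$ and the triangle inequality, but the reverse quotient $\tfrac{\varphi(x)-\varphi(z)}{\sfd(x,z)}$ requires picking (approximate) minimizers in the dual representation $\varphi(z)=\inf_{y'}\tfrac12\sfd^2(z,y')-\varphi^c(y')$ and some form of upper semicontinuity of $x\mapsto\partial^c\varphi(x)$. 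I expect to handle this by restricting to the $\mu_0$-full-measure set on which $\gamma_1\in\partial^c\varphi(\gamma_0)$ — granted by the optimality of $\ppi$ — and verifying both one-sided Lipschitz bounds there.
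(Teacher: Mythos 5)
Your part (ii) is correct and self-contained: writing $(1-t)Q_{1-t}(-\varphi^c)$ as the $c$-transform of $(1-t)\varphi^c$, reducing the superdifferential claim to the metric inequality $\sfd^2(\gamma_t,y)\ge(1-t)\sfd^2(\gamma_0,y)-t(1-t)\sfd^2(\gamma_0,\gamma_1)$ (which indeed boils down to $t\,(\sfd(\gamma_0,y)-\sfd(\gamma_0,\gamma_1))^2\ge 0$ after using $\sfd(\gamma_0,\gamma_t)=t\,\sfd(\gamma_0,\gamma_1)$), and then integrating $(\gamma_t,\gamma_1)\in\partial^c\varphi_t$ against $\ppi$ together with $W_2(\mu_t,\mu_1)=(1-t)W_2(\mu_0,\mu_1)$ is exactly the standard proof. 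Note that the paper proves neither part: it cites the metric Brenier theorem of \cite{AmbrosioGigliSavare11} for (i) and \cite{Villani09}, \cite{AmbrosioGigli11} for (ii), so for (ii) you have simply supplied the omitted argument. The Fatou half of (i), namely $\liminf_{t\downarrow0}\int\frac{\varphi(\gamma_0)-\varphi(\gamma_t)}{t}\,\d\ppi\ge W_2^2(\mu_0,\mu_1)$, is also fine.

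The gap in part (i) sits exactly where you flagged the ``main technical obstacle'', and your proposed fix does not close it. The estimate $\int|D\varphi|^2\,\d\mu_0\le W_2^2(\mu_0,\mu_1)$ cannot be derived from the pointwise implication ``$y\in\partial^c\varphi(x)\Rightarrow\lip(\varphi)(x)\le\sfd(x,y)$'', because that implication is false: on $\R$ with quadratic cost take $\varphi(x)=\tfrac{x^2}{2}-|x|$, which is $c$-concave since $\tfrac{x^2}{2}-\varphi(x)=|x|$ is convex; then $\partial^c\varphi(0)=[-1,1]\ni 0$, so the implication would force $\lip(\varphi)(0)=0$, whereas $\lip(\varphi)(0)=1$. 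Restricting to the $\ppi$-full-measure set on which $\gamma_1\in\partial^c\varphi(\gamma_0)$, as you propose, does not help, since the counterexample lives on that set: only the ascending quotient is controlled by $\sfd(x,y)$ there, while the descending quotient is governed by the *other* extreme of $\partial^c\varphi(x)$ and can be strictly larger. What is actually true is that the bad set is $\mm$-negligible (hence $\mu_0$-negligible by bounded compression), but proving this is precisely the nontrivial half of the metric Brenier theorem; it is obtained not by a pointwise Lipschitz estimate but by showing that a suitable upper semicontinuous version of $x\mapsto\sfd(x,\partial^c\varphi(x))$ is a weak upper gradient of $\varphi$ and invoking the minimality of $|D\varphi|$. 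The fact that your argument for (i) never uses the bounded-compression hypothesis is the structural symptom of this gap: it must enter exactly at this estimate. A secondary issue is that the calculus rule $|D\varphi|\le\lip(\varphi)$ is only available for locally Lipschitz functions, and a Kantorovich potential in $\s^2(X)$ need not be locally Lipschitz without an extra hypothesis such as the one in Lemma \ref{le:1}.
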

\begin{proof}
Point $(i)$ of this theorem is a restatement of the metric Brenier theorem proved in \cite{AmbrosioGigliSavare11}, while point $(ii)$ is a general fact about optimal transport in metric spaces whose proof can be found in \cite{Villani09} or \cite{AmbrosioGigli11}.
\end{proof}

In stating the continuity equation for geodesics we shall make use of the fact that for $\mu,\nu\in\probt X$ with bounded support, there always exists a Kantorovich potential from $\mu$ to $\nu$ which is constant outside a bounded set: it is sufficient to pick any Kantorovich potential satisfying \eqref{eq:4} and proceed with a truncation argument. This procedure ensures that if $\mm$ gives finite mass to bounded sets, then these Kantorovich potentials are in $\s^2(X)$.

We then have the following result:
\begin{proposition}[Geodesics]\label{prop:geod}
Let $(X,\sfd,\mm)$ be a m.m.s. with $\mm$ giving finite mass to bounded sets, $(\mu_t)\subset\probt X$ a geodesic with bounded compression such that $\mu_0,\mu_1$ have bounded supports  and $\varphi$ a Kantorovich potential from $\mu_0$ to $\mu_1$ which is constant outside a bounded set.

Then
\[
\partial_t\mu_t+\nabla\cdot(\nabla\phi_t \mu_t)=0,
\]
where $\phi_t:=-Q_{1-t}(-\varphi^c)$ for every $t\in[0,1]$.
\end{proposition}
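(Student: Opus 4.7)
The plan is to deduce the statement directly from Theorem \ref{thm:contgrad}(i) applied with the given family $\phi_t=-Q_{1-t}(-\varphi^c)$. The key structural input will be Theorem \ref{thm:geod}: part (ii) identifies $(1-t)Q_{1-t}(-\varphi^c)$ as a Kantorovich potential from $\mu_t$ to $\mu_1$, and part (i) then converts this identification into a statement about liftings representing gradients.

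First I would verify the regularity prerequisites. Since $(\mu_t)$ is a geodesic with bounded compression, it is automatically 2-absolutely continuous and thus qualifies as input for Theorem \ref{thm:contgrad}(i). To see that $\phi_t\in\s^2(X)$ for every $t\in[0,1)$ and that $(t,x)\mapsto\phi_t(x)$ is jointly Borel, I would argue that $\varphi$ being constant outside a bounded set makes $\varphi^c$ (itself $c$-concave) satisfy the type of growth estimates used in the proof of Lemma \ref{le:1}, yielding quadratic control in terms of the distance to a bounded set. Then Proposition \ref{prop:HL}(i) and (iii) give Lipschitz regularity and upper semicontinuity of $(t,x)\mapsto\lip(Q_{1-t}(-\varphi^c))(x)$, hence joint Borel measurability. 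Because $\mu_0,\mu_1$ have bounded supports, all $\mu_t$ have supports in a common bounded set, so one may work with a truncation of $\phi_t$ that is constant outside a large ball containing everything relevant; combined with the hypothesis that $\mm$ gives finite mass to bounded sets, this truncation lies in $\s^2(X)$ and agrees with $\phi_t$ on the support of every $\mu_t$.

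Next, fix a lifting $\ppi$ of $(\mu_t)$ and set $\ppi_t:=({\rm restr}_t^{1})_\sharp\ppi$. A standard property of geodesics in $(\probt X,W_2)$ shows that $\ppi_t$ is a lifting of the reparametrized curve $s\mapsto\mu_{t+s(1-t)}$, which is itself a $W_2$-geodesic with bounded compression joining $\mu_t$ to $\mu_1$. Theorem \ref{thm:geod}(ii) tells us that $\psi_t:=(1-t)Q_{1-t}(-\varphi^c)=-(1-t)\phi_t$ is a Kantorovich potential for this new pair, and Theorem \ref{thm:geod}(i) applied to the reparametrized geodesic with potential $\psi_t$ asserts that $\ppi_t$ represents the gradient of $-\psi_t=(1-t)\phi_t$ for every $t\in[0,1)$, hence for a.e.\ $t\in[0,1]$. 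This is precisely the hypothesis of Theorem \ref{thm:contgrad}(i), whose conclusion is the desired continuity equation $\partial_t\mu_t+\nabla\cdot(\nabla\phi_t\mu_t)=0$. I expect the only real technical difficulty to be the careful bookkeeping needed to guarantee that $\phi_t$ lies in $\s^2(X)$ (which involves the Hopf--Lax theory applied to a function only bounded below) so that Theorem \ref{thm:geod}(i) can be invoked for each $t$; the rest of the argument is a straightforward chaining of the two previously established theorems.
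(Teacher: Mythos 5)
Your proposal is correct and follows essentially the same route as the paper: establish that the $\phi_t$ are uniformly Lipschitz, constant outside a bounded set, and hence in $\s^2(X)$ since $\mm$ gives finite mass to bounded sets; then use Theorem \ref{thm:geod} applied to the reparametrized geodesics $s\mapsto\mu_{t+s(1-t)}$ (whose liftings are $({\rm restr}_t^1)_\sharp\ppi$) to see that these plans represent the gradient of $(1-t)\phi_t$, and conclude by Theorem \ref{thm:contgrad}(i). The only cosmetic difference is that the paper notes explicitly that the uniform bound $\sup_t\|\phi_t\|_{\s^2}<\infty$ forces $\tilde N\in L^2(0,1)$ so that the statement makes sense, whereas you leave this implicit in your regularity discussion.
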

\begin{proof}
The assumption that $\varphi$ is constant outside a bounded set easily yields that $\varphi^c$ is Lipschitz and constant outside a bounded set and that for some $B\subset X$ bounded, $\phi_t$ is constant outside $B$ for any $t\in[0,1]$. Also, recalling point $(i)$ of Proposition \ref{prop:HL} we get that  the $\phi_t$'s are uniformly Lipschitz so that  the assumption that $\mm$ gives finite mass to bounded sets yields that $\sup_{t\in[0,1]}\|\phi_t\|_{\s^2}<\infty$. In particular, the function $\tilde N$ defined in \eqref{eq:tilden} is bounded and hence in $L^2(0,1)$, so that the statement makes sense.

Now let $\ppi$ be a lifting of $(\mu_t)$ and notice that $\ppi$ is a test plan so that $t\mapsto \int f\,\mu_t$ is absolutely continuous.  For $t\in [0,1)$ the plan $\ppi_t:=({\rm restr}_t^1)_\sharp\ppi$ is a lifting of $s\mapsto\mu_{t+s(1-t)}$. Thus by Theorem \ref{thm:geod} above we deduce that $\ppi_t$ represents the gradient of $(1-t)\phi_t$.

The conclusion follows by point $(i)$ of Theorem \ref{thm:contgrad}.
\end{proof}

In many circumstances, both heat flows and geodesics have regularity which go slightly beyond that of absolute continuity. Let us propose the following definition:
\begin{definition}[Weakly $C^1$ curves]
Let $(\mu_t)\subset\probt X$ be a 2-a.c. curve with bounded compression. We say that $(\mu_t)$ is weakly $C^1$ provided for any $f\in L^1\cap \s^2(X)$ the map $t\mapsto \int f\,\d\mu_t$ is $C^1$.
\end{definition}

In presence of weak $C^1$ regularity, the description of the operators $L_t$ in Theorem \ref{thm:main} can be simplified avoiding the use of the technical Lemma \ref{le:local}: it is sufficient to define $L_t:\s^2(X)\to\R$ by
\[
L_t(f):=\frac\d{\d t}\int f\,\d\mu_t,\qquad\forall f\in\s^2(X).
\]

Let us now discuss some cases where the heat flow and the geodesics are weakly $C^1$. We recall that $(X,\sfd,\mm)$ is said \emph{infinitesimally strictly convex} provided
\[
D^-f(\nabla g)=D^+f(\nabla g),\qquad\mm-a.e. \ \forall f,g\in\s^2(X).
\]
We then have the following regularity result:
\begin{proposition}[Weak $C^1$ regularity for the heat flow]
With the same assumptions of Proposition \ref{prop:heat}, assume furthermore that $(X,\sfd,\mm)$ is infinitesimally strictly convex.

Then $(\mu_t)$ is weakly $C^1$.
\end{proposition}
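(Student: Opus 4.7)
The plan is to use Proposition \ref{prop:heat} together with infinitesimal strict convexity to obtain an explicit formula for the derivative $\frac{\d}{\d t}\int f\,\d\mu_t$, and then to verify that this formula depends continuously on $t$.

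Under infinitesimal strict convexity $D^-f = D^+f$, so the two inequalities in \eqref{eq:asdef} from Proposition \ref{prop:heat} collapse to an equality. Combined with the chain rule \eqref{eq:chains} applied to $\varphi(x) = -\log x$ (Lipschitz on $[c,C]$ by the maximum principle \eqref{eq:maxpr} together with the standing assumption $\rho_0 \geq c$), one obtains, for every $f \in L^1 \cap \s^2(X)$,
\[
\frac{\d}{\d t}\int f\,\d\mu_t = -\int Df(\nabla \rho_t)\,\d\mm\qquad \text{a.e. } t \in [0,1].
\]
It remains to show that the right-hand side is a continuous function of $t$ on $[0,1]$: combined with the absolute continuity provided by Proposition \ref{prop:heat}, this upgrades the a.e.\ identity to one valid everywhere with continuous derivative, which is exactly $C^1$ regularity.

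For continuity on $(0,1]$, I would use that infinitesimal strict convexity makes the subdifferential $\partial^- E$ single-valued on its domain, with the unique element of $\partial^- E(\rho)$ being $-\Delta\rho$ where $\Delta$ is the operator defined by the integration-by-parts identity $\int Df(\nabla \rho)\,\d\mm = -\int f\,\Delta\rho\,\d\mm$. The classical theory of gradient flows of convex lower semicontinuous functionals on Hilbert spaces with single-valued subdifferential (Brezis) then yields continuity of $t \mapsto \Delta\rho_t$ in $L^2(X,\mm)$ on $(0,\infty)$. A truncation argument reducing $f \in L^1\cap\s^2(X)$ to $f \in L^\infty\cap\s^2(X)\subset L^2(X,\mm)$ (made possible by the bound $\rho_t \leq C$ and by $\mm\in\probt X$) then yields continuity of $t\mapsto \int Df(\nabla\rho_t)\,\d\mm$ on $(0,1]$.

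The main obstacle is continuity at the endpoint $t = 0$, because when $\rho_0 \notin D(\Delta)$ we have $\|\Delta\rho_t\|_{L^2} \to +\infty$ as $t \downarrow 0$, so the conclusion cannot come from $L^2$-continuity of $\Delta\rho_t$. To handle this, I would exploit the Gateaux characterization
\[
\int Df(\nabla g)\,\d\mm = \inf_{\eps > 0}\frac{E(g+\eps f)-E(g)}{\eps} = \sup_{\eps < 0}\frac{E(g+\eps f)-E(g)}{\eps},
\]
valid under infinitesimal strict convexity. Combining $\rho_t \to \rho_0$ in $L^2$, the convergence $E(\rho_t)\to E(\rho_0)$ (from monotonicity of $E$ along the heat flow and the $L^2$-lower semicontinuity of $E$), and the lsc bound $\liminf_{t\downarrow 0} E(\rho_t + \eps f) \geq E(\rho_0 + \eps f)$, one sandwiches $\lim_{t\downarrow 0}\int Df(\nabla\rho_t)\,\d\mm$ between the two extremes evaluated at $g = \rho_0$, which coincide by infinitesimal strict convexity. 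This identifies the limit as $\int Df(\nabla\rho_0)\,\d\mm$ and completes the continuity statement.
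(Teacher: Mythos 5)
Your treatment of $t\in(0,1]$ is essentially the paper's own argument: Proposition \ref{prop:heat} gives the two-sided bound $-\int D^+f(\nabla\rho_t)\,\d\mm\le\frac{\d}{\d t}\int f\,\d\mu_t\le-\int D^-f(\nabla\rho_t)\,\d\mm$, infinitesimal strict convexity collapses it to an equality and forces $\partial^-E(\rho)$ to contain at most one element, and the weak--strong closure of $\partial^-E$ together with the monotonicity of $t\mapsto\|\Delta\rho_t\|_{L^2}$ yields continuity of $t\mapsto\int f\Delta\rho_t\,\d\mm$ away from $t=0$. (Strictly speaking, single-valuedness plus weak--strong closure only gives \emph{weak} $L^2$-continuity of $t\mapsto\Delta\rho_t$; two-sided strong continuity would require left-continuity of $t\mapsto\|\Delta\rho_t\|_{L^2}$, which is not automatic. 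Since you only pair $\Delta\rho_t$ against a fixed $f\in L^2$, weak continuity suffices, so this is harmless.)

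The genuine problem is the endpoint argument at $t=0$: the sandwich you propose does not close. Writing $g_t(\eps):=E(\rho_t+\eps f)$, to bound $\limsup_{t\downarrow0}\int Df(\nabla\rho_t)\,\d\mm$ from above you use $\int Df(\nabla\rho_t)\,\d\mm\le\frac{g_t(\eps)-g_t(0)}{\eps}$ with $\eps>0$, and you then need $\limsup_{t\downarrow0}g_t(\eps)\le g_0(\eps)$, i.e.\ \emph{upper} semicontinuity of $E$ along $\rho_t+\eps f\to\rho_0+\eps f$. Lower semicontinuity gives precisely the reverse inequality $\liminf_t g_t(\eps)\ge g_0(\eps)$, which bounds the difference quotient from below and is useless here; the symmetric problem occurs for $\eps<0$. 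Convexity does not rescue the step: the convex functions $g_t(\eps)=\sqrt{\eps^2+\delta_t^2}-\delta_t+\eps$ with $\delta_t\downarrow0$ satisfy $g_t(0)=0$, $\liminf_t g_t(\eps)\ge 0$ for every $\eps$, and $g_t'(0)=1$, while the comparison function $g_0\equiv0$ has $g_0'(0)=0$; so ``liminf domination plus convergence at $\eps=0$'' does not transfer derivatives at $0$. What would close the sandwich is $\|\rho_t-\rho_0\|_{\s^2}\to0$, since then $\bigl|\sqrt{2g_t(\eps)}-\sqrt{2g_0(\eps)}\bigr|\le\|\rho_t-\rho_0\|_{\s^2}$ by the triangle inequality for the seminorm and hence $g_t(\eps)\to g_0(\eps)$; but mere infinitesimal strict convexity does not guarantee that $L^2$-convergence together with $E(\rho_t)\to E(\rho_0)$ implies $\s^2$-seminorm convergence (that is a Radon--Riesz-type property one has, e.g., in the infinitesimally Hilbertian case). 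To your credit, you correctly isolated the real difficulty --- when $\rho_0\notin D(\Delta)$ the norms $\|\Delta\rho_t\|_{L^2}$ blow up as $t\downarrow0$, and the paper's one-line appeal to weak--strong closure is silent on this point --- but the specific fix you propose needs a different ingredient.
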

\begin{proof}
We have already seen in the proof of Proposition \ref{prop:heat} that for any $\rho\in D(\Delta)=D(\partial^-E)$ and $v\in -\partial^-E(\rho)\subset L^2(X,\mm)$ we have
\[
\int D^-f(\nabla \rho)\,\d\mm\leq \int fv\,\d\mm\leq \int D^+f(\nabla\rho)\,\d\mm.
\]
Thus if $(X,\sfd,\mm)$ is infinitesimally strictly convex, the set $\partial^-E(\rho)$ contains at most one element. The conclusion then follows from the weak-strong closure of $\partial^-E$.
\end{proof}
We now turn to geodesics: we will discuss only the case of \emph{infinitesimally Hilbertian} spaces, although weak $C^1$ regularity is valid on more general circumstances (see \cite{Gigli13}). We recall that $(X,\sfd,\mm)$ is infinitesimally Hilbertian provided
\[
\|f+g\|^2_{\s^2}+\|f-g\|^2_{\s^2}=2\|f\|^2_{\s^2}+2\|g\|^2_{\s^2},\qquad\forall f,g\in\s^2(X),
\]
and that on infinitesimally Hilbertian spaces  we  have
\[
D^-f(\nabla g)=D^+f(\nabla g)=D^-g(\nabla f)=D^+g(\nabla f),\qquad\mm-a.e. \ \forall f,g\in\s^2(X),
\]
so that in particular infinitesimally Hilbertian spaces are infinitesimally strictly convex. The common value of the above expressions will be denoted by $\nabla f\cdot\nabla g$.

The proof of weak $C^1$ regularity is based on the following lemma, proved in \cite{Gigli13}:
\begin{lemma}[`Weak-strong' convergence]\label{le:weakstrong}
Let $(X,\sfd,\mm)$ be an infinitesimally Hilbert space. Also:
\begin{itemize}
\item[i)]  Let  $(\mu_n)\subset\probt X$  a sequence with uniformly bounded densities, such that letting $\rho_n$ be the density of $\mu_n$ we have and  $\rho_n\to \rho$ $\mm$-a.e. for some probability density  $\rho$. Put $\mu:=\rho \mm$.
\item[ii)] Let $(f_n) \subset \s^2(X)$ be such that:
\[
{\sup}_{n\in \mathbb{N}} \int |Df_n|^2\,\d\mm <\infty,
\]
and assume that $f_n \to f$ $\mm$-a.e. for some Borel function $f$.

\item[iii)] Let $(g_n) \subset \s^2(X)$ and $g \in \s^2(X)$ such that $g_n \to g$ $\mm$-a.e. as $n \rightarrow +\infty$ and:
\[
\sup_{n\in \mathbb{N}} \int |D g_n|^2\,\d\mm <\infty,\qquad\qquad \lim_{n\rightarrow \infty} \int |D g_n|^2 \, \d\mu_n = \int |Dg|^2\,\d\mu.
\]
\end{itemize}
Then
\[
\mathop{\lim}_{n\rightarrow \infty}\int \nabla f_n\cdot \nabla g_n \,d\mu_n =\int \nabla f\cdot \nabla g \,d\mu.
\]
\end{lemma}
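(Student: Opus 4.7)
The plan is to exploit the infinitesimally Hilbertian parallelogram identity
\[
|D(g_n + \epsilon f_n)|^2 = |Dg_n|^2 + 2\epsilon\, \nabla f_n\cdot\nabla g_n + \epsilon^2 |Df_n|^2, \qquad \mm\text{-a.e.},
\]
valid for any $\epsilon \in \R$, and then its integrated form against $\mu_n$, solved for the cross term:
\[
2\epsilon \int \nabla f_n\cdot\nabla g_n\,\d\mu_n = \int |D(g_n + \epsilon f_n)|^2\,\d\mu_n - \int |Dg_n|^2\,\d\mu_n - \epsilon^2 \int |Df_n|^2\,\d\mu_n.
\]
Of the three terms on the right, the middle one converges to $\int|Dg|^2\,\d\mu$ by hypothesis $(iii)$, the last is uniformly bounded by $C\sup_n\int|Df_n|^2\,\d\mm$, and for the first I would prove a lower-semicontinuity estimate on its $\liminf_n$. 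Sending $n\to\infty$ and then $\epsilon\downarrow 0$ will give the lower bound on $\lim_n \int\nabla f_n\cdot\nabla g_n\,\d\mu_n$, while the analogous computation with the sign of $\epsilon$ flipped (equivalently, using $g_n-\epsilon f_n$) will give the matching upper bound.

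The key intermediate claim is: whenever $h_n\to h$ $\mm$-a.e.\ with $\sup_n\int|Dh_n|^2\,\d\mm<\infty$, then $h\in\s^2(X)$ and
\[
\liminf_n \int |Dh_n|^2 \,\d\mu_n \geq \int |Dh|^2 \,\d\mu.
\]
The membership $h\in\s^2(X)$ is the standard lower semicontinuity of the Cheeger energy. For the weighted bound I would work in the $L^2$-cotangent Hilbert module associated to the infinitesimally Hilbertian space: by the uniform bound, up to a subsequence $\d h_n\rightharpoonup \omega$ weakly in it; one identifies $\omega=\d h$ by testing against elements of the form $\phi\,\d k$ with $\phi$ bounded Borel and $k\in\s^2(X)$ (together with a truncation/localization to cope with $\mm$-a.e.\ rather than $L^2$-convergence of $h_n$); since $\sqrt{\rho_n}\to\sqrt{\rho}$ $\mm$-a.e.\ with a uniform $L^\infty$-bound, multiplication by $\sqrt{\rho_n}$ converges strongly in operator norm on the module to multiplication by $\sqrt{\rho}$, so $\sqrt{\rho_n}\,\d h_n\rightharpoonup\sqrt{\rho}\,\d h$ weakly there. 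The Hilbert-space lower semicontinuity of the norm yields the claim, because the squared module norms of $\sqrt{\rho_n}\,\d h_n$ and $\sqrt{\rho}\,\d h$ are exactly $\int|Dh_n|^2\,\d\mu_n$ and $\int|Dh|^2\,\d\mu$.

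Applying this to $h_n:=g_n+\epsilon f_n\to g+\epsilon f$ and invoking the parallelogram identity in the limit (using $f\in\s^2(X)$, already established), we obtain
\[
\liminf_n \int \nabla f_n \cdot \nabla g_n\,\d\mu_n \geq \int \nabla f \cdot \nabla g\,\d\mu + \frac{\epsilon}{2}\!\left(\int|Df|^2\,\d\mu - \limsup_n \int|Df_n|^2\,\d\mu_n\right).
\]
The parenthesised quantity is bounded uniformly in $\epsilon$, so sending $\epsilon\downarrow 0$ gives $\liminf_n\int\nabla f_n\cdot\nabla g_n\,\d\mu_n\geq\int\nabla f\cdot\nabla g\,\d\mu$; the twin computation with $h_n:=g_n-\epsilon f_n$ delivers the matching $\limsup$ bound. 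The main obstacle I expect is the rigorous identification $\omega=\d h$ in the weak-module step: this is precisely where the infinitesimally Hilbertian hypothesis is genuinely used (without it, only the looser one-sided bounds via $D^\pm f(\nabla g)$ are available and the polarization cannot be closed), and it requires some care because of the $\mm$-a.e.\ (rather than $L^2$-) convergence of $h_n$; once the lower semicontinuity is in place, the rest is routine passage to the limit.
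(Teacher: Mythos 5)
The paper does not actually prove this lemma: it is quoted verbatim from \cite{Gigli13}, so there is no internal proof to compare against. Your polarization strategy is, in substance, the same one used in that reference: expand $|D(g_n+\eps f_n)|^2$ by bilinearity, use hypothesis $(iii)$ for the middle term, absorb the $\eps^2$ term, and reduce everything to a joint lower semicontinuity statement for $h_n\mapsto\int|Dh_n|^2\,\d\mu_n$ under $\mm$-a.e.\ convergence of both $h_n$ and $\rho_n$. The skeleton is correct and the two-sided $\eps\downarrow0$, $\eps\uparrow0$ limit closes the argument as you describe.

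Three points deserve tightening. First, the exact $\mm$-a.e.\ identity $|D(g+\eps f)|^2=|Dg|^2+2\eps\,\nabla f\cdot\nabla g+\eps^2|Df|^2$ is not the definition of $\nabla f\cdot\nabla g$ given in the paper (which is only a one-sided incremental limit); it requires the \emph{pointwise} parallelogram identity, which does hold on infinitesimally Hilbertian spaces but is a theorem of \cite{Gigli12}, not a tautology --- this, rather than the weak-limit identification, is where the Hilbertian hypothesis is irreplaceably used, since with mere convexity of $\eps\mapsto|D(g+\eps f)|^2$ the polarization inequality points the wrong way. Second, ``converges strongly in operator norm'' is not right: uniformly bounded $\mm$-a.e.\ convergence of $\sqrt{\rho_n}$ gives convergence of the multiplication operators only in the strong operator topology; fortunately, strong operator convergence of a uniformly bounded sequence of self-adjoint operators composed with a weakly convergent bounded sequence still yields weak convergence of the products, so your conclusion $\sqrt{\rho_n}\,\d h_n\rightharpoonup\sqrt\rho\,\omega$ survives. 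Third, for the identification step you do not need the full equality $\omega=\d h$: after extracting a weakly convergent subsequence, Mazur's lemma gives convex combinations whose differentials converge strongly in $L^2$, and plugging these into the test-plan definition of weak upper gradients (using that $(\e_t)_\sharp\ppi\ll\mm$ so that $\mm$-a.e.\ convergence passes to the plan) yields $|Dh|\le|\omega|$ $\mm$-a.e., which already gives $\int|Dh|^2\,\d\mu\le\int|\omega|^2\,\d\mu\le\liminf_n\int|Dh_n|^2\,\d\mu_n$. This sidesteps the delicate closedness-under-a.e.-convergence issue you flag. With these repairs the proof is complete.
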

{We then have the following result:}
\begin{proposition}[Weak $C^1$ regularity for geodesics]\label{prop:geod2}
With the same assumptions of Proposition \ref{prop:geod} assume furthermore that $(X,\sfd,\mm)$ is infinitesimally Hilbertian and that for the densities $\rho_t$ of $\mu_t$ we have that $\rho_s\to\rho_t$ in  $L^p(X,\mm)$ { for some, and thus any, $p\in[1,\infty)$,} as $s\to t$.

Then $(\mu_t)$ is a weakly $C^1$ curve.
\end{proposition}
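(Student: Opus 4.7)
The plan is to combine Proposition \ref{prop:geod} with infinitesimal Hilbertianity to identify the a.e.\ derivative of $t\mapsto \int f\,\d\mu_t$ with the map $t\mapsto\int \nabla f\cdot\nabla\phi_t\,\d\mu_t$, and then show that this latter map is continuous on $[0,1]$ via the weak-strong convergence Lemma \ref{le:weakstrong}. Once continuity is in hand, the absolutely continuous function $t\mapsto \int f\,\d\mu_t$ will agree with the integral of a continuous function and hence be $C^1$. More precisely: under infinitesimal Hilbertianity we have $D^{\pm}f(\nabla\phi_t)=\nabla f\cdot\nabla\phi_t$ $\mm$-a.e., so the two-sided inequality in Definition \ref{def:contgrad}, granted by Proposition \ref{prop:geod}, becomes
\[
\frac{\d}{\d t}\int f\,\d\mu_t=\int \nabla f\cdot\nabla\phi_t\,\d\mu_t\qquad \textrm{for a.e. }t\in[0,1],
\]
with $\phi_t=-Q_{1-t}(-\varphi^c)$.

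The main technical step is to show that the ``kinetic energy'' $s\mapsto \int|D\phi_s|^2\,\d\mu_s$ is constant, equal to $W_2^2(\mu_0,\mu_1)$, on $[0,1)$. By Theorem \ref{thm:geod}(ii), for each $s\in[0,1)$ the function $\psi_s:=(1-s)\phi_s$ is a Kantorovich potential from $\mu_s$ to $\mu_1$, while the reparametrized curve $r\mapsto\mu_{s+r(1-s)}$ is a geodesic of bounded compression from $\mu_s$ to $\mu_1$. Applying Theorem \ref{thm:geod}(i) to this restricted geodesic (i.e.\ the metric Brenier theorem), any of its liftings represents the gradient of $-\psi_s$, and combining the definition of ``plan representing a gradient'' with the general inequality \eqref{eq:allplans} and the identity $\psi_s(x)+\psi_s^c(y)=\tfrac12\sfd^2(x,y)$ on the support of the coupling, one obtains
\[
\int |D\psi_s|^2\,\d\mu_s=W_2^2(\mu_s,\mu_1)=(1-s)^2W_2^2(\mu_0,\mu_1),
\]
hence $\int|D\phi_s|^2\,\d\mu_s\equiv W_2^2(\mu_0,\mu_1)$.

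We now verify continuity of $t\mapsto \int\nabla f\cdot\nabla\phi_t\,\d\mu_t$. Fix $t\in[0,1)$ and an arbitrary sequence $s_n\to t$; it suffices to show that any subsequence admits a further subsequence along which the integrals converge to $\int\nabla f\cdot\nabla\phi_t\,\d\mu_t$. To this end we apply Lemma \ref{le:weakstrong} with $\mu_n:=\mu_{s_n}$, $f_n:=f$ (constant), $g_n:=\phi_{s_n}$ and $g:=\phi_t$. The densities $\rho_{s_n}$ are uniformly bounded by the bounded compression of $(\mu_t)$, and the standing $L^p$-convergence assumption guarantees that, along a further subsequence, $\rho_{s_n}\to\rho_t$ $\mm$-a.e. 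The constancy $f_n\equiv f$ makes hypothesis (ii) of the lemma trivial. For (iii), Proposition \ref{prop:HL}(ii) gives pointwise continuity $\phi_{s_n}(x)\to\phi_t(x)$ for every $x$; as observed in the proof of Proposition \ref{prop:geod}, the $\phi_s$'s are uniformly Lipschitz and constant outside a common bounded set, so $\sup_n\|\phi_{s_n}\|_{\s^2}<\infty$; and by the energy constancy established above, $\int|D\phi_{s_n}|^2\,\d\mu_{s_n}=W_2^2(\mu_0,\mu_1)=\int|D\phi_t|^2\,\d\mu_t$ identically. Lemma \ref{le:weakstrong} then yields
\[
\int\nabla f\cdot\nabla\phi_{s_n}\,\d\mu_{s_n}\ \longrightarrow\ \int\nabla f\cdot\nabla\phi_t\,\d\mu_t,
\]
and the subsequence argument promotes this to convergence along the full sequence. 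Thus $t\mapsto \int\nabla f\cdot\nabla\phi_t\,\d\mu_t$ is continuous on $[0,1]$.

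The conclusion is then immediate: the absolutely continuous function $F(t):=\int f\,\d\mu_t$ satisfies $F'=G$ a.e., where $G(t):=\int\nabla f\cdot\nabla\phi_t\,\d\mu_t$ is continuous, so $F(t)=F(0)+\int_0^t G(s)\,\d s$ is $C^1$ with $F'=G$ everywhere. The main obstacle is the energy-constancy claim in the second paragraph: without the identity $\int|D\phi_s|^2\,\d\mu_s=W_2^2(\mu_0,\mu_1)$ one could only conclude the (weaker) upper bound $\limsup\int|D\phi_{s_n}|^2\,\d\mu_{s_n}\le\int|D\phi_t|^2\,\d\mu_t$ from lower semicontinuity of Cheeger-type energies, which is insufficient to feed into Lemma \ref{le:weakstrong}; the use of Theorem \ref{thm:geod} (and the self-similarity of geodesics via the restriction map) is precisely what turns this inequality into an equality.
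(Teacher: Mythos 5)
Your proof is correct, but on the key continuity step it takes a genuinely different (and heavier) route than the paper's. Both arguments reduce the claim to the continuity of $t\mapsto\int\nabla f\cdot\nabla\phi_t\,\d\mu_t$ and both invoke Lemma \ref{le:weakstrong}, but with the roles of the two sequences swapped. The paper uses the symmetry $\nabla f\cdot\nabla g=\nabla g\cdot\nabla f$ available on infinitesimally Hilbertian spaces to feed the \emph{varying} potentials $\phi_{t_n}$ into hypothesis $(ii)$ of the lemma --- where only the uniform bound $\sup_n\|\phi_{t_n}\|_{\s^2}<\infty$ and pointwise convergence are required --- and to feed the \emph{fixed} function $f$ into hypothesis $(iii)$, where the needed energy convergence $\int|Df|^2\,\d\mu_{t_n}\to\int|Df|^2\,\d\mu_t$ is automatic from the convergence of the densities. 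You instead place $\phi_{s_n}$ in hypothesis $(iii)$, which obliges you to prove $\int|D\phi_{s_n}|^2\,\d\mu_{s_n}\to\int|D\phi_t|^2\,\d\mu_t$; you do so via the constancy of the kinetic energy, $\int|D\phi_s|^2\,\d\mu_s\equiv W_2^2(\mu_0,\mu_1)$, which is in substance the energy identity of the metric Brenier theorem applied to the restricted geodesics through Theorem \ref{thm:geod}. This detour is valid and even informative (it records that the speed of the geodesic is realized by $\|\phi_s\|_{\mu_s}$ at \emph{every} $s\in[0,1)$, not just a.e.), but it is more work than necessary. Two points to tighten: your sketch of $\int|D\psi_s|^2\,\d\mu_s=W_2^2(\mu_s,\mu_1)$ only yields the lower bound as written --- for the matching upper bound apply \eqref{eq:allplans} to $\lambda\psi_s$ and optimize in $\lambda$, or simply quote the pointwise identity $|D\psi_s|(\gamma_0)=\sfd(\gamma_0,\gamma_1)$ from the metric Brenier theorem of \cite{AmbrosioGigliSavare11}; and your argument establishes continuity only at $t\in[0,1)$, so continuity at $t=1$ must still be obtained, as in the paper, by running the same argument on the reversed geodesic $t\mapsto\mu_{1-t}$.
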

\begin{proof}
By Proposition \ref{prop:geod}, its proof and taking into account the assumption of infinitesimal Hilbertianity we know that for every $t\in[0,1)$  and $f\in L^1\cap \s^2(X)$ we have
\begin{equation}
\label{eq:dergeo}
\lim_{h\downarrow0}\frac{\int f\,\d\mu_{t+h}-\int f\,\d\mu_t}h=\int\nabla f\cdot\nabla\phi_t\,\d\mu_t.
\end{equation}
To conclude it is enough to show that the right hand side of the above expression is continuous in $t$. Pick $t\in[0,1)$ and let $(t_n)\subset [0,1]$ be a sequence converging to $t$. Up to pass to a subsequence, not relabeled, and using the assumption of strong convergence in $L^1(X,\mm)$ of $\rho_{t_n}$ to $\rho_t$, we can assume that $\rho_{t_n}\to\rho_t$ $\mm$-a.e. as $n\to\infty$. The proof of Proposition \ref{prop:geod} grants that $\sup_{n}\|\phi_{t_n}\|_{\s^2}<\infty$ and that by point $(ii)$ of Proposition \ref{prop:HL} we know that $\phi_{t_n}(x)\to\phi_t(x)$ as $n\to \infty$ for every $x\in X$. Finally, it is obvious that $\lim_{n\to\infty}\int |Df|^2\,\d\mu_{t_n}=\int |Df|^2\,\d\mu_t$ (because weak convergence in duality with $C_b(X)$ plus uniform bound on the density grant convergence of the densities in all the $L^p$'s, $p<\infty$ and weak convergence in duality with $L^1(X,\mm)$).

Thus by Lemma \ref{le:weakstrong} we deduce the desired continuity of the right hand side of \eqref{eq:dergeo} for $t\in[0,1)$. Continuity at $t=1$ is obtained by considering the geodesic $t\mapsto\mu_{1-t}$.
\end{proof}

It is worth recalling that the assumptions of Proposition \ref{prop:geod2} are fulfilled on $\RCD(K,\infty)$ spaces when $(\mu_t)$ is a (in fact `the') geodesic connecting two measures with bounded support and bounded density (see \cite{Gigli13}).

\def\cprime{$'$}


\begin{thebibliography}{10}

\bibitem{AmbrosioColomboDimarino12}
{\sc L.~Ambrosio, M.~Colombo, and S.~Di~Marino}, {\em Sobolev spaces in metric
  measure spaces: reflexivity and lower semicontinuity of slope}.
\newblock Submitted Paper, 2012.

\bibitem{AmbrosioGigli11}
{\sc L.~Ambrosio and N.~Gigli}, {\em A user's guide to optimal transport}.
\newblock Modelling and Optimisation of Flows on Networks, Lecture Notes in
  Mathematics, Vol. 2062, Springer, 2011.

\bibitem{AmbrosioGigliSavare08}
{\sc L.~Ambrosio, N.~Gigli, and G.~Savar{\'e}}, {\em Gradient flows in metric
  spaces and in the space of probability measures}, Lectures in Mathematics ETH
  Z\"urich, Birkh\"auser Verlag, Basel, second~ed., 2008.

\bibitem{AmbrosioGigliSavare11-2}
\leavevmode\vrule height 2pt depth -1.6pt width 23pt, {\em Metric measure
  spaces with riemannian {R}icci curvature bounded from below}.
\newblock Accepted by Duke math. J. ArXiv:1109.0222, 2011.

\bibitem{AmbrosioGigliSavare11}
\leavevmode\vrule height 2pt depth -1.6pt width 23pt, {\em Calculus and heat
  flow in metric measure spaces and applications to spaces with {R}icci bounds
  from below}, Inventiones mathematicae,  (2013), pp.~1--103.

\bibitem{BenamouBrenier00}
{\sc J.-D. Benamou and Y.~Brenier}, {\em A computational fluid mechanics
  solution to the {M}onge-{K}antorovich mass transfer problem}, Numer. Math.,
  84 (2000), pp.~375--393.

\bibitem{Bogachev07}
{\sc V.~I. Bogachev}, {\em Measure theory. {V}ol. {I}, {II}}, Springer-Verlag,
  Berlin, 2007.

\bibitem{Gigli12}
{\sc N.~Gigli}, {\em On the differential structure of metric measure spaces and
  applications}.
\newblock Accepted by Mem. Amer. Math. Soc.. ArXiv:1205.6622, 2012.

\bibitem{Gigli13}
\leavevmode\vrule height 2pt depth -1.6pt width 23pt, {\em The splitting
  theorem in non-smooth context}.
\newblock Preprint, arXiv:1302.5555, 2013.

\bibitem{Gigli-Kuwada-Ohta10}
{\sc N.~Gigli, K.~Kuwada, and S.-i. Ohta}, {\em Heat flow on {A}lexandrov
  spaces}, Communications on Pure and Applied Mathematics, 66 (2013),
  pp.~307--331.

\bibitem{Heinonen07}
{\sc J.~Heinonen}, {\em Nonsmooth calculus}, Bull. Amer. Math. Soc. (N.S.), 44
  (2007), pp.~163--232.

\bibitem{Lisini07}
{\sc S.~Lisini}, {\em Characterization of absolutely continuous curves in
  {W}asserstein spaces}, Calc. Var. Partial Differential Equations, 28 (2007),
  pp.~85--120.

\bibitem{Otto01}
{\sc F.~Otto}, {\em The geometry of dissipative evolution equations: the porous
  medium equation}, Comm. Partial Differential Equations, 26 (2001),
  pp.~101--174.

\bibitem{Villani09}
{\sc C.~Villani}, {\em Optimal transport. Old and new}, vol.~338 of Grundlehren
  der Mathematischen Wissenschaften, Springer-Verlag, Berlin, 2009.

\end{thebibliography}
\end{document}